\date{}
\def\BState{\State\hskip-\ALG@thistlm}
\newlist{casess}{enumerate}{1}
\setlist[casess]{label=     \textbf{Case} \arabic*:}
\newcommand*{\rom}[1]{\expandafter\@slowromancap\romannumeral #1@}
\patchcmd{\ttlh@hang}{\parindent\z@}{\parindent\z@\leavevmode}{}{}
\patchcmd{\ttlh@hang}{\noindent}{}{}{}
\definecolor{mygreen}{RGB}{28,172,0} 
\definecolor{mylilas}{RGB}{170,55,241}
\newlist{Assumptions}{enumerate}{1}
\setlist[Assumptions]{label=     \textbf{Assumption} \arabic*:}
\newsavebox{\@brx}
\newcommand{\llangle}[1][]{\savebox{\@brx}{\(\m@th{#1\langle}\)}%
  \mathopen{\copy\@brx\kern-0.5\wd\@brx\usebox{\@brx}}}
\newcommand{\rrangle}[1][]{\savebox{\@brx}{\(\m@th{#1\rangle}\)}%
  \mathclose{\copy\@brx\kern-0.5\wd\@brx\usebox{\@brx}}}
\titleformat{\subsection}[runin]
       {\normalfont\bfseries}
       {\thesubsection}
       {0.5em}
       {}
       [.]
 \newtheorem{thm}{Theorem}[section]
 \newtheorem{cor}[thm]{Corollary}
 \newtheorem{lem}[thm]{Lemma}
 \newtheorem{prop}[thm]{Proposition}
 \theoremstyle{definition}
 \newtheorem{defn}[thm]{Definition}
 \theoremstyle{remark}
 \newtheorem{rem}[thm]{Remark}
 \newtheorem{ex}[thm]{Example}
 \numberwithin{equation}{section}
\numberwithin{equation}{section}
\DeclareMathOperator*{\esssup}{ess\,sup}
\newcommand{\rG}{\mathcal{G}}
\newcommand{\B}{\mathcal{B}} 
\newcommand{\C}{\mathbb{C}}
\newcommand{\Q}{\mathbb{Q}}
\def\N{\mathbb{N}}
\def\Z{\mathbb{Z}}
\newcommand{\E}{\mathbb{E}}
\def\H{\mathcal H}
\def\K{\mathcal K}
\def\R{\mathbb{R}}
\def\Z{\mathbb Z}
\def\S{\mathcal S}
\def\e{{\sf e}}
\newcommand{\ol}[1]{\overline{#1}}
\newcommand{\Aut}{{\rm Aut}}
\newcommand{\spn}{{\rm span}}
\newcommand{\U}{\mathcal{U}}
\newcommand{\vN}[1]{\{#1\}''}
\def\sub{\subseteq}
\DeclareMathOperator{\Span}{Span}
\DeclarePairedDelimiterX{\inp}[2]{\langle}{\rangle}{#1, #2}
\newcommand{\eps}{\epsilon}
\newcommand{\subwk}{\subset_{\rm{weak}}}
\newcommand*\bigcdot{\mathpalette\bigcdot@{.5}}
\newcommand*\bigcdot@[2]{\mathbin{\vcenter{\hbox{\scalebox{#2}{$\m@th#1\bullet$}}}}}
\def\r{{\rm r}}
\def\d{{\rm d}}
\def\si{\sigma}
\def\trace{{\sf tr}}
\def\CC{\mathbb C}
\def\<{\langle}
\def\>{\rangle}
\providecommand{\norm}[1]{\lVert#1\rVert}
\newcommand{\lr}[1]{\left(#1\right)}
\newcommand*\cls[1]{\overline{#1}}
\numberwithin{equation}{section}
\begin{document}

\title{Discrete measured groupoid von Neumann algebras via the Gaussian deformation}

\author{Felipe Flores \& James Harbour
\footnote{
\textbf{2020 Mathematics Subject Classification:} Primary 46L10, Secondary 37A20, 47L65.
\newline
\textbf{Key Words:} Measured groupoid, Gaussian action, deformation/rigidity, unbounded cocycle, primeness, fullness, maximal rigid subalgebra.}
}

\maketitle

\begin{abstract}\setlength{\parindent}{0pt}\setlength{\parskip}{1ex}\noindent

Given a discrete measured groupoid $\mathcal{G}$, we study properties of the corresponding von Neumann algebra $L(\mathcal{G})$ using the techniques of Popa's deformation/rigidity theory. More specifically, we define and study the Gaussian deformation associated with any $1$-cocycle of $\rG$ and use it to prove primeness and fullness under appropriate assumptions. We also characterize the maximal rigid subalgebras of $L(\mathcal{G})$ and produce unique prime factorization results for algebras of the form $L(\mathcal{G}_1\times\ldots\times\mathcal{G}_n)$. 
\end{abstract}

\tableofcontents

\section{Introduction}

Since its birth, Popa's deformation/rigidity theory has been extremely useful for the endeavor of obtaining structural properties in von Neumann algebras, especially for group von Neumann algebras. These properties include (but are not limited to) $W^*$-superrigidity \cite{IoPoVa10,CIOS23}, primeness \cite{Pe06,Po08,Ho16}, product rigidity, non-existence/existence and uniqueness of Cartan subalgebras \cite{Va13} and many more. The idea behind this article is to extend this approach -by means of the Gaussian deformation- to the far wider context of groupoid von Neumann algebras, in the hopes of extending older results and possibly finding new phenomena.

One of the structural properties that one often studies using deformation/rigidity is that of primeness. A type $\mathrm{II}$ von Neumann algebra is said to be prime if one cannot write it as the tensor product of two type $\mathrm{II}$ von Neumann algebras. The first primeness result was obtained by Popa \cite{Po83}, where he showed that the group von Neumann algebra of an uncountably-generated free group is prime. Later, in \cite{Ge98} using Voiculescu's work on free entropy dimension, Ge proves that all group factors coming from finitely generated free groups are prime. Then Ozawa, using $C^*$-algebra techniques, greatly generalizes this to show that all i.c.c. hyperbolic groups give rise to prime factors \cite{Oz04}. Using deformation/rigidity, Popa showed that all $\mathrm{II}_1$ factors arising from Bernoulli actions of nonamenable groups are prime \cite{Po08}. Peterson used his derivation approach to deformation/rigidity \cite{Pe06} to prove that any $\mathrm{II}_1$ factor coming from a countable group with positive first $\ell^2$-Betti number is also prime. More recent results on primeness can be found in the work of Hoff \cite{Ho16} on equivalence relations, or in the works of Marrakchi \cite{Ma17} and Patchell \cite{Pa23} on generalized wreath products. 

After primeness, the natural property to study seems to be that of decomposition into prime factors. The idea is to find a decomposition $M=P_1\cls\otimes P_2\cls\otimes \ldots\cls\otimes P_n $, which is unique in the sense that if we have any other decomposition $M={Q_1}\cls\otimes {Q_2}\cls\otimes \ldots\cls\otimes {Q_m} $, then $n=m$ and $P_i$ has to be unitarily conjugated to $Q_i$, modulo permutation of indices and amplifications. A first result like this was obtained by Ozawa and Popa in \cite{OzPo04}, where a combination of Ozawa's solidity results \cite{Oz04} and Popa's intertwining techniques \cite{Po03} is used to show that any $\mathrm{II}_1$ factor arising from a tensor product of hyperbolic group factors has such a unique tensor product decomposition. After this landmark discovery, several other unique prime factorization results have appeared, for example, in the works of Sizemore and Winchester \cite{SiWi13} and Houdayer and Isono \cite{HoIs17}. It has also led to the study of product rigidity (see \cite{Dr21} and the references therein).

Besides our results on primeness and prime factorizations, we also prove fullness of a fairly large class of factors. Fullness of von Neumann algebras was introduced by Connes in \cite{Co74}. We recall that a von Neumann algebra $M$ is called \emph{full} if the inner automorphism group ${\rm Inn} (M)$ is closed in the automorphism group $\Aut (M)$ with respect to the $u$-topology. For any factor $M$ with separable predual, $M$ is full if and only if every norm-bounded asymptotically central sequence of $M$ is asymptotically trivial. Effros showed that the group von Neumann algebra of a non-inner amenable countably infinite group is a full factor \cite{Ef75}. A little later, Choda showed that the crossed products $L^{\infty}(X) \rtimes G$, associated with the same class of groups, are full factors as soon as the action $G\curvearrowright X$ is strongly ergodic, essentially free and probability measure preserving \cite{Ch82}. Some other notable results about fullness can be found in the work of Houdayer and Isono on non-singular actions of bi-exact groups \cite{HoIs16}, which Ozawa later generalized to a bigger class of actions \cite{Oz16}, Hoff's paper on equivalence relation algebras \cite{Ho16} or the paper of Marrakchi on crossed products with factors \cite{Ma20}.

Our main tool in this article is the Gaussian deformation, which we defined for discrete measured groupoids, directly extending previous approaches of Sinclair \cite{Si11}, Peterson and Sinclair \cite{PeSi12} and Hoff \cite{Ho16}. The Gaussian deformation has also appeared in work by Vaes \cite{Va13}, de Santiago, Hayes, Hoff, Sinclair \cite{dSHH21} and de Santiago, DeBonis, and Khan \cite{dSDeKh23} in slightly different fashions. Hoff's work \cite{Ho16} is of particular relevance to us, as the study of equivalence relations naturally precedes the study of general groupoids. The study of general groupoids, however, is a little more involved, and many tools become unavailable. Particular but notable examples include the structural results obtained by Feldman and Moore \cite{FM1:77} and by Connes, Feldman, and Weiss \cite{CoFeWe81}. The reader will notice that the unavailability of these results or appropriate analogs was an obstacle that needed to be circumvented (see, for example, the discussion preceding Lemma \ref{gpbun} or Theorem \ref{full} and its proof).

In any case, we applied this deformation to obtain our first two main theorems, which are the following. 

\begin{thm}\label{mainth}
    Let $\rG$ be a nonamenable ergodic discrete measured groupoid that admits a strongly unbounded 1-cocycle into a mixing orthogonal representation weakly contained in the regular representation. Then $L(\rG)$ is prime. Furthermore, if $\rG$ is strongly ergodic and $L(\rG)$ is a factor, then it is also a full factor.
\end{thm}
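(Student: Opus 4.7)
The plan is to combine the Gaussian deformation $(\alpha_t)_{t\in\R}$, presumably built earlier in the paper on a dilation $\tilde M \supset M := L(\rG)$ from the orthogonal representation $\pi$ and the $1$-cocycle $c$, with Popa's spectral-gap / transversality machinery. Two features of the deformation will be the workhorses: first, the Gaussian $M$-$M$ bimodule is weakly contained in the coarse bimodule $L^2(M)\otimes L^2(M)$, because $\pi$ is weakly contained in the regular representation of $\rG$; and second, the malleable symmetry $\beta$ yields the standard transversality inequality controlling $\|\alpha_{2t}(x) - E_M\alpha_{2t}(x)\|_2$ in terms of $\|\alpha_t(x)-E_M\alpha_t(x)\|_2$. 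Let me sketch the two halves separately.

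For primeness, I would assume a splitting $M = P\,\cls{\otimes}\,Q$ with both tensor factors of type $\mathrm{II}$, hence diffuse and nonamenable (if either factor were amenable, nonamenability of $\rG$ would already be contradicted). Apply spectral gap to $P$: since $P$ is nonamenable and the Gaussian bimodule is weakly contained in the coarse bimodule, the standard Popa--Ioana argument forces uniform convergence $\alpha_t \to \mathrm{id}$ on the unit ball of $P' \cap M$, which contains $Q$. Transversality then upgrades this to the genuine rigidity statement that $Q$ is "rigid" for the deformation. Following the Peterson--Sinclair/Hoff translation between deformation-rigidity and cocycles, this forces the cocycle $c$ to be bounded on the unitary group of $Q$. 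Using intertwining-by-bimodules together with the mixing hypothesis on $\pi$, I would then locate a diffuse subgroupoid of $\rG$ on which $c$ stays bounded, directly contradicting strong unboundedness.

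For fullness, assume $L(\rG)$ is a factor, $\rG$ is strongly ergodic, and for contradiction that $L(\rG)$ is not full; pick a unitary central sequence $(u_n)$ which is not asymptotically trivial. The spectral-gap argument now plays out in the ultrapower $M^\omega$: asymptotic centrality, via weak containment in the coarse bimodule, forces $\|\alpha_t(u_n) - u_n\|_2 \to 0$ uniformly in $n$. The same cocycle analysis used for primeness then forces $(u_n)$ to be asymptotically supported on a subgroupoid along which $c$ is bounded; strong ergodicity rules out concentration on the unit space $L^\infty(\rG^{(0)})$, and strong unboundedness of $c$ closes the argument.

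The main obstacle will be executing these spectral-gap and cocycle-extraction steps intrinsically for groupoids. Without Feldman--Moore or Connes--Feldman--Weiss, one cannot realize $L(\rG)$ as a crossed product of a Cartan by a group and reduce to the group/action case; instead I would work entirely at the level of $M$-$M$ bimodules over the reference subalgebra $L^\infty(\rG^{(0)})\subset L(\rG)$, and develop the groupoid version of the intertwining-by-bimodules lemma needed to pass from rigidity of $Q$ (or of $(u_n)$) to boundedness of $c$ on a subgroupoid. In particular, the fiber structure of $\rG$ means the "unit-space" and "isotropy" contributions to the cocycle have to be separated by hand, a step that does not arise in the group or equivalence relation settings and that I expect to consume most of the technical effort.
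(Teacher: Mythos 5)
Your overall frame (the Gaussian deformation on $\widetilde M\supset M$, weak containment of the Gaussian bimodule in the coarse one coming from $\pi\prec\lambda_\rG$, transversality, spectral gap) is the paper's frame, but both halves of your argument end with an invalid contradiction. The problematic step is ``rigidity of $Q$ (resp.\ of the central sequence) $\Rightarrow$ the cocycle is bounded on a diffuse subgroupoid $\Rightarrow$ contradiction with strong unboundedness.'' Boundedness of $b$ on a subgroupoid does \emph{not} contradict strong unboundedness of $b$ on $\rG$: the kernel subgroupoid $\{g:b(g)=0\}$ always exists, contains the unit space, and can be large, and indeed Theorem \ref{maxrig2} shows that rigid subalgebras are exactly of the form $L(\mathcal S')$ with $b\vert_{\mathcal S'}$ bounded, so such subgroupoids coexist happily with strongly unbounded cocycles. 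In the paper, strong unboundedness is used in a direct, computational way: for $\sigma\in[\rG]$ one has $\norm{\E_M(\alpha_t(u_\sigma))}_2^2=\int_X e^{-2t^2\norm{b(x\sigma)}^2}\dd{\mu(x)}$, so a full group element on which $b$ is large on a set of measure $\geq\delta$ shows that $\alpha_t\to{\rm id}$ is \emph{not} uniform on $(M)_1$ (Lemma \ref{lem:primelem}). For primeness one must therefore upgrade the spectral-gap rigidity of one tensor factor to rigidity on all of $M$ --- the paper does this using that the two factors commute, via the rigid-envelope results of de Santiago--Hayes--Hoff--Sinclair inside Theorem \ref{popaspectralgap} --- and only then does non-uniformity give the contradiction. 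Your sketch stops at rigidity of $Q$ and never makes this extension; also your parenthetical that both tensor factors must be nonamenable is false ($R\,\cls{\otimes}\,L(\mathbb F_2)$ is nonamenable), though only one nonamenable factor is needed.

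For fullness the same misconception recurs, and in addition the genuinely hard groupoid-specific step is absent. Your spectral-gap half (asymptotic rigidity of the central sequence) is essentially the paper's use of Lemma \ref{standard} plus weak containment in the coarse bimodule (almost-central vectors of the orthocomplement with norms bounded below would force $M$ amenable), but the complementary half --- showing the deformation does \emph{not} converge along the central sequence --- requires (i) non-uniform convergence on $(M)_1$, again from the explicit $u_\sigma$ computation, and (ii) the relative-mixing input $\langle u_n\delta_t(x),\delta_t(x)u_n\rangle\to 0$, which needs $\norm{\E_A(xu_ny)}_2\to 0$ for \emph{all} $x,y\in M$. Strong ergodicity only gives $\norm{\E_A(u_n)}_2\to 0$; upgrading this is precisely where the paper's new work for general groupoids lies in Theorem \ref{full}: splitting $[\rG]$ through $[{\rm Iso}(\rG)]$ and $[\mathcal R_\rG]$ via a Lusin--Novikov section, covering $\mathcal R_\rG$ by Feldman--Moore involutions, and the fixed-point splitting Lemma \ref{ergodicth}. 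You correctly anticipate that the isotropy must be handled by hand, but you propose no mechanism, and your intended conclusion (``asymptotically supported on a subgroupoid where the cocycle is bounded, contradicting strong unboundedness'') is again not a contradiction. As written, neither the primeness nor the fullness argument closes.
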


\begin{thm}[Corollary \ref{mainth1}]\label{mainth0}
    For $i \in \{1, 2, \ldots, k\}$, let $\rG_i$ be a nonamenable strongly ergodic discrete measured groupoid that admits a strongly unbounded $1$-cocycle into a mixing orthogonal representation weakly contained in the regular representation. Further assume that $L(\rG_i)$ is a factor. Then $M=L(\rG_1 {\times} \rG_2 {\times} \dots {\times} \rG_k) $ satisfies the following.
\begin{enumerate}
    \item[(i)] If $M = N \cls{\otimes} Q$ for tracial factors $N, Q$, there must be a partition $I_N \cup I_Q = \{1, \dots, k\}$ and $t > 0$ such that $N^t = \bigotimes_{i \in I_N} L(\rG_i)$ and $Q^{1/t} = \bigotimes_{i \in I_Q} L(\rG_i)$ modulo unitary conjugacy in $M$.
    \item[(ii)] If $M = P_1 \cls{\otimes} P_2 \cls{\otimes} \cdots \cls{\otimes} P_m$ for $\rm{II}_1$ factors $P_1, \dots, P_m$ and $m \geq k$, then $m = k$, each $P_i$ is prime, and there are $t_1, \dots, t_k > 0$ with $t_1t_2\cdots t_k = 1$ such that after reordering indices and conjugating by a unitary in $M$ we have $L(\rG_i) = P_i^{t_i}$ for all $i$. 
    \item[(iii)] In (ii), the assumption $m \geq k$ can be omitted if each $P_i$ is assumed to be prime.
\end{enumerate}
\end{thm}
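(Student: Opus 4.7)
The plan is to follow the template for unique prime factorization pioneered by Ozawa and Popa \cite{OzPo04}, feeding in the Gaussian deformation machinery developed in this paper specialized to the product groupoid $\rG_1\times\cdots\times\rG_k$. Theorem \ref{mainth} guarantees that each $M_i := L(\rG_i)$ is a full prime $\mathrm{II}_1$ factor, and the Gaussian deformation on $M_i$ extends canonically to a deformation $\alpha^{(i)}_t$ of $M := L(\rG_1\times\cdots\times\rG_k) = M_1\cls{\otimes}\cdots\cls{\otimes} M_k$ acting trivially on $\widehat M_i := \bigotimes_{j\neq i} M_j$.

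For (i), suppose $M = N\cls{\otimes} Q$. Fix $i$ and apply, for each of $N$ and $Q$, the dichotomy provided by the maximal rigid subalgebra technology underlying Theorem \ref{mainth}: with respect to $\alpha^{(i)}_t$, a subalgebra of $M$ is either rigid, in which case it intertwines into $\widehat M_i$ via Popa's intertwining-by-bimodules, or its Gaussian noise escapes into the orthocomplement. Since $M = N\cls{\otimes} Q$ and $M_i$ is full, a joint spectral-gap argument rules out simultaneous non-rigidity of $N$ and $Q$ relative to $\widehat M_i$; hence for each $i$, at least one of $N\prec_M \widehat M_i$ or $Q\prec_M \widehat M_i$ holds, and the absorption/primeness lemma of \cite{OzPo04} upgrades ``at least one'' to ``exactly one''. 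Setting $I_N := \{i : Q\prec_M \widehat M_i\}$ and $I_Q := \{i : N\prec_M \widehat M_i\}$ and combining intertwinings across all $i$ gives $N\prec_M \bigotimes_{i\in I_N} M_i$ and $Q\prec_M \bigotimes_{i\in I_Q} M_i$. Comparing with $M = N\cls{\otimes} Q$ and invoking Popa's unitary conjugacy criterion then produces $u\in\mathcal{U}(M)$ and $t > 0$ with $u N u^* = \bigl(\bigotimes_{i\in I_N} M_i\bigr)^t$ and $u Q u^* = \bigl(\bigotimes_{i\in I_Q} M_i\bigr)^{1/t}$.

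Parts (ii) and (iii) follow by iterating (i). For (ii), apply (i) to the splitting $M = P_1\cls{\otimes}(P_2\cls{\otimes}\cdots\cls{\otimes} P_m)$ and then inductively to the complementary factor; each iteration allocates a nonempty subset of $\{1,\dots,k\}$ to some $P_j$. Primeness of each $M_i$ forces $M_i$ to sit inside a single $P_{\sigma(i)}$ up to amplification, and the assumption $m\geq k$ together with the fact that no $P_j$ can end up empty pins down $m = k$ and produces a permutation $\sigma$ with $P_{\sigma(i)}^{t_i} = L(\rG_i)$ after unitary conjugation in $M$. For (iii), primeness of each $P_j$ replaces the hypothesis $m\geq k$: if some $P_j$ absorbed two distinct $M_a, M_b$, its primeness would clash with the nontrivial induced tensor factorization, again forcing $m\geq k$.

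The main obstacle is the dichotomy step in (i): extracting from the Gaussian-deformation analysis a clean statement that handles the two complementary subfactors $N$ and $Q$ simultaneously under $\alpha^{(i)}_t$, and then sharpening the resulting weak intertwinings into a unitary conjugacy with explicit amplification $t$. The groupoid-theoretic complications flagged in the introduction---notably the absence of a Feldman--Moore style presentation---mean that the spectral-gap arguments used in the group and equivalence-relation settings cannot be borrowed verbatim and must be rederived directly from the Gaussian deformation together with the fullness conclusion of Theorem \ref{mainth}.
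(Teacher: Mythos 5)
Your plan is the right general strategy in spirit, but as written it has a genuine gap: the three steps that carry all the weight are asserted rather than proved, and they are precisely the content of the theorem. First, the dichotomy ``for each $i$, $N\prec_M \widehat M_i$ or $Q\prec_M \widehat M_i$'' does not fall out of Theorem \ref{mainth} or the maximal-rigidity machinery by itself; one must run the spectral gap argument for the deformation $\alpha^{(i)}_t=\alpha_t\otimes{\rm id}$ against the pair of commuting subalgebras $N,Q$ and then convert rigidity of one of them into an intertwining statement, which is a nontrivial argument (this is where the bimodule conditions of Lemma \ref{lem:primelem} and fullness enter). Second, ``combining intertwinings across all $i$ gives $N\prec_M\bigotimes_{i\in I_N}M_i$'' is not a formal consequence of the separate intertwinings $N\prec_M\widehat M_i$: embedding into each of several subalgebras does not in general give embedding into their intersection, and in the Ozawa--Popa scheme this step requires its own argument. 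Third, upgrading the resulting intertwinings to an actual unitary conjugacy with the amplification parameter $t$ (and the ``exactly one'' claim you attribute to an absorption lemma) is again a theorem-level step that uses fullness of the $M_i$ in an essential way; ``invoking Popa's unitary conjugacy criterion'' does not by itself produce it. You acknowledge the first and third issues yourself, which confirms the proof is incomplete as it stands.

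By contrast, the paper does none of this from scratch: it quotes Hoff's unique prime factorization theorem (Theorem \ref{uniquefactor}, \cite[Theorem 6.4]{Ho16}), whose hypotheses are exactly what has already been established for each $M_i=L(\rG_i)$ --- fullness (Theorem \ref{full}), the $s$-malleable Gaussian deformation with ${}_{M_i}L^2(\widetilde M_i)\ominus L^2(M_i)_{M_i}$ weakly contained in the coarse bimodule and mixing relative to $A_i$, and non-uniform convergence of the deformation (Lemma \ref{lem:primelem}) --- so the corollary is immediate. Your worry about the absence of a Feldman--Moore presentation is misplaced at this stage: the groupoid-specific difficulties were already absorbed into the proofs of primeness and fullness, and the UPF step is a purely von Neumann algebraic black box. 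To repair your proposal, either cite such a black-box UPF theorem and verify its hypotheses as the paper does, or genuinely carry out the dichotomy, the combination of intertwinings, and the conjugacy/amplification bookkeeping, which amounts to reproving Hoff's theorem.
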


As the reader can tell, our theorems are proven for the class of groupoids that admit ``strongly unbounded'' $1$-cocycles, whose definition is a technical strengthening of the (regular) definition of unbounded $1$-cocycles. Let us mention that these two notions coincide in many situations. In the case of transformation groupoids, every unbounded $1$-cocycle induced by the group is strongly unbounded. Moreover, every unbounded $1$-cocycle is strongly unbounded if the groupoid is a group bundle, an equivalence relation or, more generally, a semidirect product groupoid. A discrete measured groupoid is automatically a semidirect product groupoid when the equivalence relation associated with the groupoid is treeable (which includes the case of hyperfinite equivalence relations). This property makes the class of semidirect product groupoids very desirable and it is one of the reasons why they have been gaining some popularity in recent times (see \cite{PoShVa20,BCDKK24}). Let us also mention that the study of treeability is of central importance in measured group theory (see \cite{Ga10}) and percolation theory (see \cite[Section 4.2]{ChIo10}).

Our next result makes use of the framework of maximal rigid subalgebras developed in \cite{dSHH21} to produce a von Neumann algebra-based generalization of a famous result of Peterson and Thom \cite[Theorem 5.6, Corollary 5.8]{PeTh11}. This result was obtained for group algebras by de Santiago, Hayes, Hoff, and Sinclair \cite{dSHH21}.

\begin{thm}[Theorem \ref{bens}] \label{bensmain}
    Let $ \rG $ be an ergodic discrete measured groupoid and $ \pi $ a weak mixing, orthogonal representation of $ \rG $ on a real Hilbert bundle $ X\ast \H $. Suppose that $\rG$ admits an unbounded 1-cocycle into $\pi$ and that $L(\mathcal{G})$ is diffuse. Then, for any pair $P,Q\subset L(\rG)$ of subalgebras with property (T) such that $P\cap Q$ is diffuse, we have that $P\vee Q\not=L(\rG)$.
\end{thm}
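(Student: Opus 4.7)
The plan is to adapt the framework of maximal rigid subalgebras from \cite{dSHH21} to the groupoid setting by combining it with the Gaussian deformation developed earlier in this paper. Given the unbounded $1$-cocycle $c \colon \rG \to \pi$, I would first construct the associated Gaussian dilation $\widetilde{M} \supset L(\rG)$ equipped with its canonical one-parameter group $(\alpha_t)_{t \in \R}$ of trace-preserving automorphisms; these converge pointwise to the identity on $L(\rG)$ as $t \to 0$. I would then define the maximal $\alpha$-rigid subalgebra $L(\rG)_{\mathrm{rig}} \subset L(\rG)$ to be the largest von Neumann subalgebra on which $\alpha_t \to \mathrm{id}$ uniformly on the unit ball.

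The conclusion would then follow from three properties: (a) every subalgebra of $L(\rG)$ with property (T) is contained in $L(\rG)_{\mathrm{rig}}$; (b) whenever $N_1, N_2 \subset L(\rG)_{\mathrm{rig}}$ have diffuse intersection, $N_1 \vee N_2 \subset L(\rG)_{\mathrm{rig}}$; and (c) $L(\rG)_{\mathrm{rig}} \neq L(\rG)$. Property (a) is the standard fact that any property (T) subalgebra is rigid under any deformation. For (c), if $L(\rG)_{\mathrm{rig}} = L(\rG)$, then $\alpha_t \to \mathrm{id}$ uniformly on the unit ball of $L(\rG)$, which by the Peterson/Sinclair-type analysis \cite{Si11, PeSi12} forces the cocycle $c$ to be cohomologous to a bounded one, contradicting unboundedness --- weak mixing of $\pi$ is used here to rule out an invariant-vector coboundary term. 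Assembling the pieces, $P, Q \subset L(\rG)_{\mathrm{rig}}$ by (a), and since $P \cap Q$ is diffuse, (b) yields $P \vee Q \subset L(\rG)_{\mathrm{rig}} \subsetneq L(\rG)$.

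The principal obstacle is (b), the stability of rigidity under joins with diffuse intersection. The argument in \cite{dSHH21} encodes rigidity of $N_i$ as a certain Hilbert $L(\rG)$-bimodule containment built from the Gaussian construction, and then invokes weak mixing of $\pi$ together with a Popa-style intertwining argument to transfer this control across a diffuse intersection and hence to the full join. In the groupoid setting the relevant Gaussian bimodule lives over the (generally non-trivial) base $L^\infty(X)$ rather than $\C$, so the delicate point will be re-establishing the weak-mixing dichotomy for such bimodules in terms of the groupoid notion of weak mixing for $\pi$. Once this is in place, the remaining arguments --- namely (a) and (c) --- should be essentially mechanical extensions of their group-theoretic counterparts.
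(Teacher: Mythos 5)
Your overall reduction is reasonable, but it has one structural defect and one genuine gap. The structural defect: the global object $L(\rG)_{\mathrm{rig}}$, ``the largest subalgebra on which $\alpha_t\to\mathrm{id}$ uniformly,'' does not exist. Every subalgebra with $\norm{\cdot}_2$-compact unit ball (e.g.\ every finite-dimensional subalgebra) is rigid, since the $\alpha_t$ are $\norm{\cdot}_2$-isometric and converge pointwise; these subalgebras generate $L(\rG)$, so a ``largest'' rigid subalgebra would have to be $L(\rG)$ itself, while $L(\rG)$ need not be rigid. What the theory of \cite{dSHH21} provides is a rigid \emph{envelope} of a given rigid subalgebra $Q$ satisfying $Q'\cap\widetilde{M}\subset M$ (Theorem \ref{rigenvelope}); maximal rigid subalgebras are not unique. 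This part is repairable: by \cite[Theorem 1.3]{dSHH21}, which is a statement about abstract $s$-malleable deformations of tracial von Neumann algebras and hence applies verbatim once the Gaussian deformation of Section \ref{gaussianc} is constructed, $P\vee Q$ is itself rigid (so your worry that step (b) requires redoing the weak-mixing bimodule analysis over $L^\infty(X)$ is unfounded --- that step is deformation-abstract, and it is exactly how the paper proceeds). After this repair, everything hinges on your step (c): showing $\alpha_t$ does not converge uniformly on $(L(\rG))_1$.

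Step (c) is where the real gap lies. The Peterson--Sinclair computation in the groupoid setting gives $\norm{\alpha_t(u_\sigma)-u_\sigma}_2^2=2\int_X\big(1-e^{-t^2\norm{b(x\sigma)}^2}\big)\dd{\mu(x)}$ for $\sigma\in[\rG]$ (and similarly for partial isometries over bisections), so uniform convergence only yields that for every $\delta>0$ there is $R>0$ with $\mu(\{x:\norm{b(x\sigma)}\geq R\})<\delta$ for \emph{all} $\sigma$ --- i.e.\ that $b$ is not \emph{strongly} unbounded. It does not force $b$ to be bounded (equivalently a coboundary, Lemma \ref{unboundedness}), because the small exceptional sets may move with $\sigma$. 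Since the theorem assumes only that $b$ is unbounded, and ``unbounded $\Rightarrow$ strongly unbounded'' is only established for special classes (group bundles, equivalence relations, semidirect product groupoids; Lemma \ref{gpbun}, Proposition \ref{unbound}) and is not available for general groupoids, your intended contradiction does not go through under the stated hypotheses. This is precisely why the paper does not argue non-uniform convergence directly here, but instead proves the structure theorems Proposition \ref{maxrig1} and Theorem \ref{maxrig2}: using ergodicity, Lemma \ref{bound}, and weak mixing of $\pi$ (hence of the Gaussian extension), the maximal rigid subalgebra in question is of the form $L(\S')$ for a subgroupoid $\S'$ on which $b$ is bounded, so a rigid algebra such as $P\vee Q$ cannot equal $L(\rG)$ when $b$ is unbounded. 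To complete your proposal you would need either to prove ``unbounded $\Rightarrow$ strongly unbounded'' in full generality (not known) or to replace (c) by this envelope structure theorem, which is the actual groupoid-specific content of the argument and not the mechanical extension you anticipate.
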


Restricting ourselves to the setting of transformation groupoids yields results about crossed products (also called `group measure space von Neumann algebras'), which are interesting by themselves. In such a case, we require that the acting group admits an unbounded $1$-cocycle into a mixing orthogonal representation weakly contained in the regular representation, as this induces a strongly unbounded $1$-cocycle on the transformation groupoid. The main examples of such groups are those groups with positive first $\ell^2$-Betti number. Peterson and Thom noted that the groups with positive first $\ell^2$-Betti number are exactly those that admit an unbounded $1$-cocycle into the (real) left regular representation \cite{PeTh11}. Examples of such groups contain finitely generated free groups, groups which are measure equivalent to them, some one-relator groups (see \cite{DiLi07}) and surface groups. For these groups and their crossed products, we obtained the following corollaries, which are a direct application of the previously stated theorems.

\begin{cor}
   Let $G$ be a countable group that admits an unbounded $1$-cocycle into a mixing orthogonal representation weakly contained in the regular representation, and let $G\curvearrowright (X,\mu)$ be an ergodic, nonamenable probability measure preserving action. Then $L^\infty(X,\mu)\rtimes G$ is prime. Furthermore, if the action is strongly ergodic and $L^\infty(X,\mu)\rtimes G$ is a factor (immediate if $G$ is i.c.c. or if the action is free), then $L^\infty(X,\mu)\rtimes G$ is also a full factor. 
\end{cor}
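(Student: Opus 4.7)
The strategy is to realize $L^\infty(X,\mu)\rtimes G$ as a groupoid von Neumann algebra and then invoke Theorem \ref{mainth}. Setting $\rG := X\rtimes G$, the transformation groupoid with unit space $X$ and arrows $(x,g)$, one has the standard identification $L(\rG)\cong L^\infty(X,\mu)\rtimes G$. Nonamenability and ergodicity of the action transfer directly to nonamenability and ergodicity of $\rG$ as a discrete measured groupoid, and strong ergodicity of the action is precisely strong ergodicity of $\rG$ in the groupoid sense.

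To apply Theorem \ref{mainth} one must produce a strongly unbounded $1$-cocycle of $\rG$ into a mixing orthogonal representation of $\rG$ weakly contained in its regular representation. Given the group cocycle $c:G\to\H$ into $\pi:G\to\mathcal{O}(\H)$, I would form the trivial real Hilbert bundle $X\ast\H := X\times\H$ and define an orthogonal representation $\tilde\pi$ of $\rG$ on it by sending the arrow $(x,g)$ (with source $g^{-1}x$ and range $x$) to the unitary $\pi(g):\H\to\H$ between the respective fibers, together with the lifted map $\tilde c(x,g):=c(g)\in\H$. The cocycle identity for $\tilde c$ reduces immediately to the cocycle identity for $c$.

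The remaining checks are that $\tilde\pi$ inherits from $\pi$ both mixing and weak containment in the regular representation of $\rG$; both follow essentially fiberwise since the underlying bundle is trivial and the groupoid structure comes directly from the $G$-action. By the discussion following Theorem \ref{mainth0}, the lifted cocycle $\tilde c$ is automatically strongly unbounded because $\rG$ is a transformation groupoid and $c$ is unbounded on $G$. Theorem \ref{mainth} then yields primeness of $L(\rG)=L^\infty(X,\mu)\rtimes G$, and, under the additional strong ergodicity and factor hypotheses, fullness; the factor condition is immediate from standard crossed product facts when $G$ is i.c.c.\ or when the action is free. The main obstacle is not conceptual but technical: carefully matching the groupoid-theoretic definitions (mixing, weak containment, strong unboundedness) against their known group-theoretic counterparts for the trivial-bundle lift, a task I would reduce to standard verifications on $G$ and $\pi$.
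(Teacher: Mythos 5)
Your proposal is correct and follows essentially the same route as the paper: identify $L^\infty(X,\mu)\rtimes G$ with $L(X\rtimes G)$, lift the group cocycle and representation to the transformation groupoid via the constant-fiber bundle (this is exactly the content of Example \ref{exstrongunbound}, which also gives strong unboundedness of the lifted cocycle), and then invoke Theorem \ref{prime} for primeness and Theorem \ref{full} for fullness.
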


\begin{cor}
    Let $G$ be a countable group that admits an unbounded $1$-cocycle into a weak mixing orthogonal representation weakly contained in the regular representation, and let $G\curvearrowright (X,\mu)$ be an ergodic probability measure preserving action. Suppose that $L^\infty(X,\mu)\rtimes G$ is diffuse. Then, for any pair $P,Q\subset L^\infty(X,\mu)\rtimes G$ of subalgebras with property (T) such that $P\cap Q$ is diffuse, we have that $P\vee Q\not= L^\infty(X,\mu)\rtimes G$.
\end{cor}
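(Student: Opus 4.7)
The plan is to reduce this corollary directly to Theorem \ref{bensmain} applied to the transformation groupoid $\rG := G\ltimes X$. Recall that $L(\rG) \cong L^\infty(X,\mu)\rtimes G$ in a canonical way, and that ergodicity of the action $G\curvearrowright (X,\mu)$ is precisely ergodicity of $\rG$ as a discrete measured groupoid. Diffuseness of $L^\infty(X,\mu)\rtimes G$ is given. So the first step is to translate the remaining input data, i.e.\ the cocycle and the representation, from the group setting to the groupoid setting.

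Given the orthogonal representation $\pi\colon G\to \mathcal{O}(\H)$ and the $1$-cocycle $c\colon G\to \H$, I would consider the trivial Hilbert bundle $X\ast \H := X\times \H$, on which $\rG$ acts diagonally via $\pi$: concretely, $(g,x)\cdot (x,\xi) := (gx, \pi(g)\xi)$. This yields an orthogonal representation $\tilde\pi$ of $\rG$. Define the induced cocycle $\tilde c\colon\rG\to X\ast\H$ by $\tilde c(g,x):=(gx, c(g))$; the cocycle identity follows immediately from that of $c$ combined with equivariance of $\tilde\pi$. The next step is to verify that $\tilde\pi$ is weak mixing as a representation of the groupoid whenever $\pi$ is weak mixing as a representation of $G$ — this should reduce to the standard characterization of weak mixing in terms of the lack of finite-dimensional invariant subrepresentations, lifted fiber-wise to $X\ast\H$. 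Similarly, unboundedness of $\tilde c$ is immediate from unboundedness of $c$, since the fibers of $\tilde c$ record exactly the values $c(g)$.

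With these translations in place, all hypotheses of Theorem \ref{bensmain} are met: $\rG$ is ergodic, $\tilde\pi$ is a weak mixing orthogonal representation of $\rG$, $\tilde c$ is an unbounded $1$-cocycle of $\rG$ into $\tilde\pi$, and $L(\rG) = L^\infty(X,\mu)\rtimes G$ is diffuse. Given any pair of subalgebras $P,Q\subseteq L^\infty(X,\mu)\rtimes G$ with property (T) and $P\cap Q$ diffuse, Theorem \ref{bensmain} directly yields $P\vee Q\neq L^\infty(X,\mu)\rtimes G$, proving the corollary.

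The main potential obstacle, though entirely routine, is verifying that the fiber-wise construction $(g,x)\mapsto (gx,c(g))$ genuinely defines a cocycle of the discrete measured groupoid into the induced Hilbert bundle representation, and that weak mixing passes correctly from the group to the induced groupoid representation. Both facts are essentially bookkeeping: the former uses only the group cocycle identity, and the latter follows because the invariant sections of $\tilde\pi$ over $X$ are determined by $G$-invariant vectors in $\H$ combined with ergodicity of the base action. No further deformation/rigidity machinery is needed beyond what has already been developed to prove Theorem \ref{bensmain}.
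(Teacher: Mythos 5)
Your proposal is correct and follows essentially the same route as the paper: the constant-fiber bundle representation and cocycle you build on the transformation groupoid $X\rtimes G$ are exactly the $\pi_\rG$ and $b_\rG$ of Example \ref{exstrongunbound}, where the paper records that (weak) mixing and unboundedness pass from the group to the groupoid, and the corollary is then a direct application of Theorem \ref{bens} to this ergodic groupoid with $L(\rG)\cong L^\infty(X,\mu)\rtimes G$ diffuse. The differences are only bookkeeping (conventions for the transformation groupoid and the level of detail in checking that unboundedness and weak mixing transfer), not substance.
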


It is worth noting that a crossed product $L^\infty(X,\mu)\rtimes G$ is diffuse in most cases of interest. For example, and because of the ergodicity assumption, this is the case if the action is ergodic, $G$ is infinite and $\mu$ is not supported in a finite set.

As a more concrete example, we found the following primeness result for generalized Bernoulli shifts. For the groups that fit our setting, it generalizes the results of Patchell \cite[Theorem 1.3]{Pa23} and of Marrakchi \cite{Ma17}.

\begin{cor}[Corollary \ref{genbern}]\label{maincor3}
    Let $G$ be a countable group which admits an unbounded $1$-cocycle into a mixing orthogonal representation weakly contained in the regular representation, and let $G\curvearrowright (Y^I,\nu^{\otimes I})$ be a generalized Bernoulli shift. Further suppose that $G\curvearrowright I$ is nonamenable. Then $L^\infty(Y,\mu)^I\rtimes G$ is a prime factor.
\end{cor}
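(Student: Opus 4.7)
The plan is to realize $L^{\infty}(Y,\mu)^I\rtimes G$ as the groupoid von Neumann algebra $L(\rG)$ of the transformation groupoid $\rG := Y^I\rtimes G$ associated with the generalized Bernoulli shift, and then verify each hypothesis of Theorem~\ref{mainth}.

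First, I would check the basic structural properties of $\rG$. Nonamenability of the action $G\curvearrowright I$ forces $G$ itself to be nonamenable, since every action of an amenable group is amenable. A classical result of Schmidt then says that the absence of a $G$-invariant mean on $I$ implies strong ergodicity (and in particular ergodicity) of the generalized Bernoulli shift $G\curvearrowright(Y^I,\nu^{\otimes I})$. Since this shift is free and measure preserving, the transformation groupoid $\rG$ is ergodic and nonamenable, and $L(\rG)=L^{\infty}(Y,\mu)^I\rtimes G$ is a $\mathrm{II}_1$ factor.

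Second, I would lift the given unbounded group cocycle to $\rG$. Let $c:G\to \mathcal{H}$ be the unbounded $1$-cocycle into a mixing orthogonal representation $\pi\prec\lambda_G$ provided by hypothesis. Form the trivial real Hilbert bundle $Y^I\times\mathcal{H}\to Y^I$ with $\rG$-representation $\tilde\pi(x,g):=\pi(g)$, and set $\tilde c(x,g):=c(g)$. A direct computation shows that $\tilde c$ is a $1$-cocycle into $\tilde\pi$. By the remark in the paper following Theorem~\ref{mainth0}, any unbounded transformation-groupoid cocycle induced from the acting group is automatically strongly unbounded, so $\tilde c$ is strongly unbounded. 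Both mixing and weak containment in $\lambda_{\rG}$ for $\tilde\pi$ reduce, via triviality of the bundle, to the corresponding properties of $\pi$ on $G$; for mixing one approximates sections by finite sums $\sum_i f_i\otimes v_i$ and uses that $|\langle\pi(g)v_i,v_j\rangle|\to 0$ together with the uniform boundedness of $\int f_i(x)\overline{f_j(gx)}\,d\nu^{\otimes I}(x)$.

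Once all hypotheses of Theorem~\ref{mainth} are verified, we immediately conclude that $L(\rG)=L^\infty(Y,\mu)^I\rtimes G$ is a prime factor. The main obstacle I anticipate is the careful fibrewise bookkeeping needed to formalize mixing and weak containment for groupoid representations on Hilbert bundles, and to justify the strong-unboundedness remark in this Bernoulli-shift setting; but because the bundle here is trivial and $\tilde c$ is pulled back from the group, these verifications ultimately reduce to the corresponding group-theoretic statements for $\pi$ and $c$.
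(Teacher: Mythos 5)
Your overall strategy is the same as the paper's: pass to the transformation groupoid $\rG = Y^I\rtimes G$, lift the group cocycle and the representation to $\rG$ exactly as in Example \ref{exstrongunbound}, and then invoke the primeness result (Theorem \ref{mainth}, i.e.\ Corollary \ref{corcross}/Theorem \ref{prime}). The lifting step and the deduction that $G$ itself is nonamenable are fine. The genuine weak point is the sentence ``Since this shift is free and measure preserving, the transformation groupoid $\rG$ is ergodic and nonamenable, and $L(\rG)$ is a $\mathrm{II}_1$ factor.'' Freeness of a generalized Bernoulli shift does \emph{not} follow from the stated hypotheses: nonamenability of $G\curvearrowright I$ only rules out finite orbits, not a kernel. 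For instance, $G=F_2\times\Z/2$ acting through the projection onto $F_2$ by translation on $I=F_2$ satisfies every hypothesis (one has $\beta_1^{(2)}(G)=1/2>0$, so $G$ admits an unbounded $1$-cocycle into a mixing representation weakly contained in $\lambda_G$, and $G\curvearrowright I$ is nonamenable), yet the central involution acts trivially on $Y^I$, so the shift is not free and the crossed product is $\bigl(L^\infty(Y^{F_2})\rtimes F_2\bigr)\cls{\otimes}\,L(\Z/2)$, which is not even a factor. So both the freeness claim and the $\mathrm{II}_1$-factoriality you derive from it require an extra argument or an extra (faithfulness/essential freeness) assumption; they cannot be read off the hypotheses as written.

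The two consequences you extract from freeness should be separated. Nonamenability of the groupoid does not need freeness: either use the standard fact that a Zimmer-amenable action admitting an invariant probability measure forces the acting group to be amenable, or follow the paper, which argues that if the Bernoulli action were amenable then by Kuhn \cite{Ku94} together with \cite[Theorem 1.2]{KeTs08} the regular representation of $G$ would have almost invariant vectors, contradicting nonamenability of $G$; the paper gets ergodicity from the fact that all orbits of $G\curvearrowright I$ are infinite \cite[Proposition 2.1]{KeTs08}. (Your appeal to strong ergodicity is more than primeness requires, and for \emph{generalized} shifts the correct reference is Kechris--Tsankov rather than Schmidt.) Factoriality, on the other hand, genuinely needs freeness-type input, which neither your argument nor, for that matter, the paper's one-line deduction from Corollary \ref{corcross} supplies; if you add essential freeness (e.g.\ $\nu$ nonatomic and $G\curvearrowright I$ faithful, or every nontrivial element moving infinitely many indices) then factoriality is standard and the remainder of your argument goes through exactly as in the paper.
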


The paper is organized as follows. Section \ref{preliminaries section} contains preliminaries that include the general definitions involving discrete measured groupoids and their von Neumann algebras, unitary/orthogonal representations, $1$-cohomology and the basics on malleable deformations. Section \ref{gaussianc} is devoted to constructing and describing the Gaussian extension and the associated Gaussian deformation that, as we mentioned, will be the central tool for the development of our results. Section \ref{bimoddddd} describes how groupoid representations induce bimodules over the groupoid von Neumann algebra, and establishes a dictionary between some of the properties of these objects. Section \ref{primeness} is devoted to obtaining our primeness result. Section \ref{fullness} is devoted to obtaining conditions that guarantee fullness of the groupoid algebra, completing Theorem \ref{mainth}; this is especially important in obtaining the result about unique prime factorization and makes Section \ref{UPF} very short. Indeed, Section \ref{UPF} only contains a theorem of Hoff \cite{Ho16} and its application which, due to the previous work, immediately yields Theorem \ref{mainth0}. Finally, in Section \ref{maxrigsec}, we characterize the maximal rigid subalgebras of the deformation in terms of the groupoid. This is then used to prove Theorem \ref{bensmain}.

\section{Preliminaries}
\label{preliminaries section}

\subsection{Discrete measured groupoids}\label{meas-group}

We will work with groupoids $\rG$ over a unit space $X\equiv\rG^{(0)}$, identified as small categories in which all the morphisms (arrows) are invertible. 
The domain and range maps are denoted by ${\rm d,r}:\rG\to \rG^{(0)}$ and the family of composable pairs by $\,\rG^{(2)}\!\subset\rG\times\rG$\,.  For $x\in\rG\,,\,A, B\subset\rG$ we use the notations
\begin{equation}\label{botations}
AB\coloneqq\{ab\,:\,a\in A,b\in B,\d(a)=\r(b)\},
\end{equation}
\begin{equation*}
Ax\coloneqq A\{x\}\quad \textup{and}\quad xA\coloneqq\{x\}A
\end{equation*}
These sets could be void in non-trivial situations. A subset of the unit space $Y \subset \rG^{(0)}$ is called {\it invariant} if $Y=\r(\rG Y)$.

Suppose that $\rG$ is a groupoid equipped with the structure of a standard Borel space 
such that the composition and the inverse map are Borel and 
$\d^{-1}(\{x\})$ is countable for all $x\in\rG^{(0)}$. Then the domain and range maps are measurable, $\rG^{(0)}\subset\rG$ is 
a Borel subset, and $\d^{-1}(\{x\})$ is countable.

Now let $\mu$ be a probability measure on 
the set of units $\rG^{(0)}$. Then, 
for any measurable subset $A\subset\rG$, the function 
$\rG^{(0)}\ni x\mapsto \#\bigl (\d^{-1}(x)\cap A\bigr )$ 
is measurable, and the measure $\mu_\d$ on $\rG$ defined by 
$$
\mu_\d(A)=\int_{\rG^{(0)}} \#\bigl (\d^{-1}(x)\cap A\bigr )d\mu (x)=\int_{\rG^{(0)}} \#\bigl (Ax)d\mu (x)
$$
is $\sigma$-finite. The measure $\mu_\r$ is defined in an analogous manner, replacing $\d$ by $\r$. 

\begin{prop}\label{invr} Let $i: x\to x^{-1}$ be the inversion map in $\rG$. The following conditions on $\mu$ are equivalent. 
\begin{enumerate}
\item $\mu_\d=\mu_\r$, 
\item $i_*\mu_\d=\mu_\d$, 
\item for every Borel subset $E\subset\rG$ such that 
$\d\vert_{ E}$ and $\r\vert_{E}$ are injective we have 
$\mu(\d(E))=\mu(\r(E))$. 
\end{enumerate}
\end{prop}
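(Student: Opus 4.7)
The plan is to handle the equivalences by first establishing the symmetry $\mu_\r = i_*\mu_\d$ directly (which gives $(1)\Leftrightarrow(2)$) and then reducing $(1)\Leftrightarrow(3)$ to a decomposition of $\rG$ into countably many Borel ``bisections'' on which both $\d$ and $\r$ are injective.

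First, for $(1)\Leftrightarrow(2)$: recall $\d\circ i=\r$ and $\r\circ i=\d$. For any Borel $A\subset\rG$, using $i$ as a Borel bijection of $\rG$ and the identity $a\in\r^{-1}(x)\cap A \iff a^{-1}\in \d^{-1}(x)\cap i(A)$, one computes
\[
\mu_\r(A)=\int_{\rG^{(0)}}\#(\r^{-1}(x)\cap A)\,d\mu(x)=\int_{\rG^{(0)}}\#(\d^{-1}(x)\cap i(A))\,d\mu(x)=\mu_\d(i(A))=(i_*\mu_\d)(A).
\]
So $\mu_\r=i_*\mu_\d$, and condition $(2)$ is literally $\mu_\d=\mu_\r$, giving the equivalence with $(1)$.

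Next, for $(1)\Rightarrow(3)$: if $\d|_E$ is injective, then $\#(\d^{-1}(x)\cap E)=\mathbbm{1}_{\d(E)}(x)$ for $\mu$-a.e.\ $x$, and $\d(E)$ is Borel by the Lusin--Souslin theorem on injective Borel images between standard Borel spaces. Thus $\mu_\d(E)=\mu(\d(E))$; symmetrically $\mu_\r(E)=\mu(\r(E))$. Assuming $(1)$, we conclude $\mu(\d(E))=\mu_\d(E)=\mu_\r(E)=\mu(\r(E))$.

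For the nontrivial direction $(3)\Rightarrow(1)$, the main step is the following decomposition: any Borel subset $A\subset\rG$ may be written as a countable disjoint union $A=\bigsqcup_n E_n$ with $\d|_{E_n}$ and $\r|_{E_n}$ both injective. This follows from the Lusin--Novikov uniformization theorem applied twice: since $\d$ has countable fibers, $\rG$ (hence $A$) splits into a countable disjoint union of Borel sets on which $\d$ is injective; inversion symmetry (as $i^{-1}\circ\d=\r$) shows that $\r$ also has countable fibers, so each such piece may be further refined into countably many Borel subsets on which $\r$ is also injective. Granting this decomposition, the argument closes via countable additivity:
\[
\mu_\d(A)=\sum_n\mu_\d(E_n)=\sum_n\mu(\d(E_n))=\sum_n\mu(\r(E_n))=\sum_n\mu_\r(E_n)=\mu_\r(A).
\]
The main obstacle is the Borel decomposition into bisections, which is really the only nontrivial content; both the equivalence $(1)\Leftrightarrow(2)$ and the passage between bisections and images of $\d,\r$ are formal once the measurable structure is set up, and invoking Lusin--Novikov is the standard way to sidestep having to construct a section of $\d$ by hand.
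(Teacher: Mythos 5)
The paper states this proposition without giving a proof (it is quoted as a standard fact), so there is nothing to compare against line by line; judged on its own, your argument is correct and is the standard one. The three ingredients are all handled properly: the fiberwise bijection $a\mapsto a^{-1}$ giving $i_*\mu_\d=\mu_\r$ (so that (2) is literally (1)), the observation that $\mu_\d(E)=\mu(\d(E))$ and $\mu_\r(E)=\mu(\r(E))$ for Borel sets on which $\d$ and $\r$ are injective (with Lusin--Souslin guaranteeing Borelness of the images), and the Lusin--Novikov decomposition of an arbitrary Borel set into countably many such pieces, closed off by countable additivity of $\mu_\d$ and $\mu_\r$.
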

Such a probability measure is called {\it invariant} and we denote $\mu_\rG=\mu_\r=\mu_\d$.

\begin{defn}\label{measuredgroupoid}
A discrete, measurable groupoid $\rG$ together with an invariant
probability measure 
on $\rG^{(0)}$ is 
called a {\it discrete measured groupoid}. 
\end{defn}

For $A\subset\rG^{(0)}$ one uses the standard notations 
\begin{equation*}\label{faneaka}
\rG_A\!\coloneqq\d^{-1}(A)\,,\quad\rG^A\!\coloneqq{\rm r}^{-1}(A)\,,\quad\rG_A^A\coloneqq\rG_A\cap\rG^A.
\end{equation*} If $A$ is Borel, then we equip $\rG_A^A$ with the normalized measure $\frac{1}{\mu(A)}\mu\vert_{A}$, so it becomes 
a discrete measured groupoid, called the {\it restriction} of 
$\rG$ to $A$ and is denoted by $\rG\vert_A$. Note that every unit $x\in X$ gives rise to a subgroup $\rG_x^x$, called an \emph{isotropy subgroup}. The union of all isotropy subgroups is naturally a group bundle, we denote it by ${\rm Iso}(\rG)$ and call it the \emph{isotropy subgroupoid} of $\rG$.

As usual, the set of complex-valued, measurable, essentially bounded functions (modulo almost everywhere null functions) on $\rG$ with respect to $\mu_\rG$ is denoted by $L^\infty (\rG,\mu_\rG)$. For a function $\phi:\rG\rightarrow\CC$ and $x\in\rG^{(0)}$ we put \begin{equation*}\begin{aligned}
D(\phi)(x)&=\#\left\{g\in\rG:\phi(g)\ne
  0,\d(g)=x\right\},\\ 
R(\phi)(x)&=\#\left\{g\in\rG:\phi(g)\ne
  0,\r(g)=x\right\}.
\end{aligned}
\end{equation*}
The {\it groupoid ring} $\CC\rG$ of $\rG$ is defined as 
\[
\CC\rG=\left\{\phi\in L^\infty(\rG,\mu_\rG): \text{$D(\phi)$ and $R(\phi)$ are essentially bounded on $\rG^{(0)}$}\right\}.
\]
$\CC\rG$ is a $^*$-algebra containing $L^\infty(\rG^{(0)})\equiv L^\infty(\rG^{(0)},\mu)$ as a $^*$-subalgebra. Multiplication is given by the convolution product 
\[
\phi*\eta(x)=\sum_{yz=x}\phi(y)\eta(z)
\]
and the involution is defined by 
\[
\phi^*(x)=\overline{\phi(x^{-1})}.
\]
The groupoid ring $\CC\rG$ of discrete measured groupoid $\rG$ is a weakly dense $^*$-subalgebra in the {\it von Neumann algebra} $L(\rG)$ of $\rG$. In fact, $L(\rG)$ is defined as the WOT-closure of $\CC\rG$, when the latter is $^*$-represented as convolution operators 
\[
L_\psi(\xi)(x)=\psi*\xi(x)= \sum_{yz=x}\psi(y)\xi(z),\quad\textup{   for }\psi\in \CC\rG, \xi\in L^2(\rG,\mu_{\rG}).
\]
The von Neumann algebra $L(\rG)$ has a canonical tracial state $\trace_{L(\rG)}$, which is induced by the invariant measure $\mu$. For $\phi\in\CC\rG\subset L(\rG)$, it is given by the formula 
\[
\trace_{L(\rG)}(\phi)=\int_{\rG^{(0)}}\phi(x)d\mu(x).
\]

\begin{defn}
  A (Borel) \textit{bisection} of $ \rG $ is a Borel subset $ \sigma\sub \rG $ such that the sets $ \sigma x $ and $ x\sigma $ have at most $1$ element for every $ x\in G^{0} $. Borel bisections form an inverse semigroup with respect to the operation on sets described in \eqref{botations}.

  The \textit{full pseudogroup} $ [[\rG]] $ of $ \rG $ is the inverse semigroup consisting of Borel bisections modulo the relation of being equal almost everywhere. The \textit{full group} $ [\rG] $ is the subset of $ [[\rG]] $ consisting of the Borel bisections $ \sigma $ such that $ \sigma\sigma^{-1} = \sigma^{-1}\sigma = \rG^{(0)} $. When $\sigma\in [\rG]$, $ \sigma x $ and $ x\sigma $ have exactly one element, so we identify them with said element.

  For any pseudogroup element $\sigma\in [[\rG]]$, we define the partial isometry $u_\sigma:= L_{1_\sigma}\in L(\rG)$ as left convolution by the indicator function of $\sigma\sub \rG$. When $\sigma$ is a full group element, $u_\sigma$ is a unitary in $L(\rG)$. 
\end{defn}


\begin{defn}[\cite{ADRe00}]
    The groupoid $\rG$ is called \emph{amenable} if there exists a norm one projection $M:L^\infty(\rG, \mu_\rG)\to L^\infty(X,\mu)$ with $M(\phi*\psi)= \phi*M(\psi)$ for all $\psi \in L^\infty(\rG,\mu_\rG)$, and all Borel functions $\phi$ on $\rG$ such that $\sup_{x\in X}\sum_{g\in\rG^x}|\phi(g)|<\infty$.
\end{defn}

It is known that a discrete measured groupoid $\rG$ is amenable if and only if its von Neumann algebra $L(\rG)$ is amenable \cite[Corollary 6.2.12]{ADRe00}. Moreover, we say that $\rG$ has an amenable direct summand if there is a measurable subset $Y \subset X$ such that $\mu(Y) > 0$, $\rG\vert_Y$ is amenable, and $\rG = \rG\vert_Y \cup \rG\vert_{Y^c}$. In this case, $L(\rG)$ has an amenable direct summand as well.

We now turn our attention to some of the main examples of discrete measured groupoids and their von Neumann algebras. Subsection \ref{semidirekt} will introduce another important class of examples. For more examples, we recommend looking at \cite{So24} or at the last section of \cite{BCDK24}.

\begin{ex}\label{transformation}
Suppose $G$ is a countable group acting (in a measure-preserving way) on the standard probability space $(X,\mu)$, denoted by $g\cdot x\equiv g\cdot_\theta x$, for $g\in G, x\in X$. The {\it transformation groupoid} $\rG\coloneqq X\rtimes_\theta G$ has $G\times X$ as the underlying set. The multiplication is 
$$
(g,x)(h,g^{-1}\cdot x)\coloneqq(gh,x)$$ and inversion reads $$(g,x)^{-1}\coloneqq\big(g^{-1}\!,g^{-1}\cdot x\big).
$$ 
So $\rG^{(0)}=\{1_G\}\times X$ gets identified with $X$, so $\r(g,x)=x$ and $\d(g,x)=g^{-1}\cdot x$. In this case, $L(\rG)\cong L^\infty(X,\mu)\rtimes_\theta G$ with its usual trace. In particular, if $X=\{x_0\}$ is a singleton, this construction gives the group algebra $L(G)$ again with its usual trace. 
\end{ex}

\begin{defn}\label{zimmer}
    We will say that the action $G\curvearrowright (X,\mu)$ is \emph{amenable}, or even \emph{Zimmer amenable} if the associated transformation groupoid $X\rtimes G$ is amenable.
\end{defn}

\begin{ex}\label{equivrel}
Let $(X,\mu)$ be a standard probability space and $\mathcal R\subset X\times X$ be an equivalence relation which is a measurable subset. $\mathcal R$ becomes a discrete measured groupoid with the operations 
$$
\d(x,y)=(y,y)\,,\quad \r(x,y)=(x,x)\,,\quad (x,y)(y,z)=(x,z)\,,\quad(x,y)^{-1}\!=(y,x)\,.
$$
The unit space is $\mathcal R^{(0)}={\sf Diag}(X)$ and we identify it with $X$, via the map $(x,x)\mapsto x$. We say that $\mathcal R$ is measure preserving if the resulting groupoid is a discrete measured groupoid. The algebra $L(\mathcal R)$ introduced here coincides with the usual equivalence relation von Neumann algebra (typically introduced using the full pseudogroup). For example, see \cite[Section 2.2]{Ho16}. 
\end{ex} 

\begin{rem}\label{nonfree}
A particular way to obtain equivalence relations is via group actions. Namely, if $G$ is a countable group acting in a measure-preserving way on the probability space $(X,\mu)$, we can form the equivalence relation $\mathcal R_{G,X}$ given by the orbits of the action, more explicitly
\[
\mathcal R_{G,X}=\{(x,y)\in X\times X : y=g\cdot x, \textup{ for some }g\in G\},
\]
with the inherited Borel structure of $X\times X$. It is a famous result of Feldman and Moore that every discrete measured equivalence relation can be realized as $\mathcal R_{G,X}$ for some probability measure-preserving group action \cite[Theorem 1]{FM1:77}.
    
It is worth noting that, when the action is essentially free (meaning that for almost all $x\in X$, the stabilizer of $x$ is trivial), the groupoids $\mathcal R_{G,X}$ and $X\rtimes_\theta G$ are isomorphic up to a measure zero set and so they generate the same von Neumann algebra (cf. \cite{FM2:77}). 
\end{rem}


\begin{defn}
    A discrete measured groupoid $\rG$ is called {\it ergodic} if $\mu(Y)\in\{0,1\}$ for every Borel invariant subset $Y\subset \rG^{(0)}$.
\end{defn}

Let $L^\infty(X,\mu)^\rG$ denote the algebra of $L^\infty$ functions fixed by $\rG$. That is, every $a\in L^\infty(X,\mu)^\rG$ satisfies 
\[
a(\si^{-1}x)=a(x) \text{ for almost all } x\in X \text{ and all }\si\in[\rG].
\] It is clear that $L^\infty(X,\mu)^\rG=\CC1$ if and only if $\rG$ is ergodic. We also note that $L^\infty(X,\mu)^\rG$ lies in the center $\mathcal{Z}(L(\rG))=L(\rG)'\cap L(\rG)$ of $L(\rG)$. Therefore, the ergodicity of $\rG$ is necessary for $L(\rG)$ to be a factor. In a recent preprint, Berendschot, Chakraborty, Donvil and Kim characterize the groupoids that give a factor \cite{BCDK24} by introducing and studying the notion of \emph{i.c.c.} groupoids.

We will deal mostly with ergodic groupoids, so it seems convenient to examine when our examples satisfy this condition.

\begin{rem}
The transformation groupoid introduced in Example \ref{transformation} is ergodic precisely when the action $\theta$ is ergodic. Moreover, note that, for $(g,x)\in \rG$ and $y\in Y$ with $\d(g,x)=y$, we have 
$$
\r\big((g,x)(1_G,y)\big)=\r\big((g,x)\big)=x=g\cdot_\theta y, 
$$
so $\r(\rG Y)=G\cdot_\theta Y$ and $X\rtimes_\theta G$ is ergodic if and only if $\mu(G\cdot_\theta Y)\in\{0,1\}$ for every Borel subset $Y\subset X$. In particular, every group is an ergodic groupoid.
\end{rem}

\begin{rem}
In the case of equivalence relation groupoids $\mathcal R$ (Example \ref{equivrel}), we have $$
\r\big((x,y)(y,y)\big)=\r(x,y)=x, 
$$
so $\r(\rG Y)=\{x\in X: x\sim_{\mathcal R} y,\textup{ for some }y\in Y\}=: [Y]_{\mathcal R}$ and $\mathcal R$ is ergodic if $\mu([Y]_{\mathcal R})\in\{0,1\} $, for every Borel subset $Y\subset X$.

\end{rem}


\subsection{Semidirect product groupoids} \label{semidirekt}

A discrete measured groupoid $(\mathcal{G},\mu)$ comes with an associated discrete Borel equivalence relation $\mathcal{R}_\rG$, defined on $(X,\mu)$ with $X=\mathcal{G}^{(0)}$. It is, in fact, given by 
\[
\mathcal{R}_\rG\coloneqq\{(\r(g),\d(g)) : g\in\mathcal{G}\}.
\]
It also comes with a canonical quotient map 
\begin{equation*}
    q:\rG\to\mathcal{R}_\rG,\quad\text{given by}\quad q(g)=\big(\r(g),\d(g)\big).
\end{equation*}
The purpose of this section is to introduce a special class of groupoids, called semidirect product groupoids. These could be characterized as the groupoids for which the above-mentioned quotient map admits a right-inverse measurable map $\mathcal R_\rG\to\rG$ which is also a homomorphism. 

In what follows, we understand a group bundle $\Gamma=(G_x)_{x\in X}$ simply as a Borel groupoid where the maps $\d$ and $\r$ coincide. In this case, the group $G_x$ is precisely the isotropy subgroup associated to the unit $x\in X$.

Now consider any discrete Borel equivalence relation $\mathcal{R}$ on a standard probability space $(X,\mu)$, and a group bundle of discrete countable groups $\Gamma\coloneqq(G_{x})_{x\in X}$. An action of $\mathcal{R}$ on $\Gamma$ is given by a family of group isomorphisms $\delta_{(y,x)}: G_{x}\rightarrow G_{y}$ for all $(y,x)\in\mathcal{R}$ such that $\delta_{(z,y)}\circ\delta_{(y,x)}=\delta_{(z,x)}$, $\delta_{(y,x)}^{-1}=\delta_{(x,y)}$, and $\delta{(x,x)}=\text{id}_{G_{x}}$ hold $\mu$-almost everywhere. 

In the above setting, one defines the \emph{semidirect product groupoid} $\rG=\Gamma\rtimes_\delta \mathcal{R}$ as the set 
$$
\{((y,x),g)\in \mathcal R\times\Gamma\mid g\in G_{x}\text{ and }(y,x)\in\mathcal{R}\},
$$ 
equipped with the natural measurable structure and the operations  
\[
((z,y), h)\circ((y,x), g)=((z,x), \delta_{(x,y)}(h)\cdot g)
\]
and
\[
((y,x), g)^{-1}=((x,y), \delta_{(y,x)}(g^{-1}))\,.
\]
It then follows that the domain and range maps can be described by the projections $\d((y,x),g)= x$ and $\r((y,x),g)= y$, after the -natural- identification of the unit space $\rG^{(0)}=\{((x,x), 1_{G_x})\in \mathcal{R}\times\Gamma: x\in X\}$ with $X$.

Recall now that a discrete Borel equivalence relation $\mathcal{R}$ on $(X,\mu)$ is called \emph{treeable} if there exists a Borel graph $T$ on $(X,\mu)$ such that $T\subseteq\mathcal{R}$ and if for $\mu$-almost every $x\in X$ the connected component of $x$ in $T$ is a tree with vertex set $T\cdot x$. Hyperfinite equivalence relations provide the easiest example of treeable equivalence relations (cf. \cite{CoFeWe81}). In \cite[Proposition 6.5]{PoShVa20}, Popa-Shlyakhtenko-Vaes proved the following proposition, of great utility for us.

\begin{prop} Let $\mathcal R$ be a discrete measured equivalence relation on the standard probability space $(X, \mu)$. The following two statements are equivalent. \begin{itemize}
    \item[(i)] $\mathcal R$ is treeable.
    \item[(ii)] Any discrete measured groupoid $\rG$ with $\mathcal R_\rG\cong\mathcal R$ is isomorphic to a semidirect product groupoid. 
\end{itemize}
\end{prop}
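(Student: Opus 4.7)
The crux is the implication (i) $\Rightarrow$ (ii); the converse is easier to motivate. Assume a Borel treeing $T\subset\mathcal{R}$ is given. The plan is to construct a Borel groupoid homomorphism section $s:\mathcal{R}\to \rG$ of the quotient $q:\rG\to\mathcal{R}_\rG\cong\mathcal{R}$. Once $s$ is produced, $\rG$ is identified with the semidirect product $\mathrm{Iso}(\rG)\rtimes_\delta\mathcal{R}$ via $((y,x),g)\mapsto s(y,x)\,g$, whose inverse sends $h\in \rG$ to $(q(h),\,s(q(h))^{-1}h)$, with $\mathcal{R}$-action $\delta_{(y,x)}(g)=s(y,x)\,g\,s(y,x)^{-1}$ on $\mathrm{Iso}(\rG)$. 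Checking that this bijection respects the groupoid operations reduces to the cocycle identity $s(z,y)s(y,x)=s(z,x)$, which is exactly the homomorphism property of $s$.

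To build $s$, I would proceed in three steps. Step one: apply the Lusin--Novikov uniformization theorem to the countable-to-one Borel surjection $q:q^{-1}(T)\to T$ to obtain a Borel section $s_0:T\to \rG$. Step two: pick a Borel transversal of the edge-inversion $(y,x)\mapsto(x,y)$ on $T$ and redefine $s_0$ on its complement so that $s_0(x,y)=s_0(y,x)^{-1}$ everywhere. Step three: for a general pair $(y,x)\in\mathcal{R}$, use the a.e.\ unique non-backtracking $T$-path of units $y=v_0,v_1,\dots,v_n=x$ and set $s(y,x):=s_0(v_0,v_1)\,s_0(v_1,v_2)\cdots s_0(v_{n-1},v_n)$. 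The cocycle identity rests on acyclicity of $T$: the concatenation of the non-backtracking paths $z\to y$ and $y\to x$, after cancelling any common reversed tail, is the unique non-backtracking path $z\to x$; and the symmetry arranged in step two guarantees that the cancelled $s_0$-factors multiply to the identity. Borel measurability follows by working separately on each piece of the Borel partition of $\mathcal{R}$ by finite $T$-distance.

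For (ii) $\Rightarrow$ (i), the strategy is to apply the hypothesis to a groupoid built from an arbitrary Borel graphing $\Phi$ of $\mathcal{R}$, constructed so that any semidirect decomposition yields an acyclic Borel subgraphing, i.e.\ a treeing of $\mathcal{R}$. The natural candidate is a ``free extension'' $\rG_\Phi$ of $\mathcal{R}$ whose isotropy at each unit is the free group on the cycles of $\Phi$ based at that point; a splitting $s:\mathcal{R}\to \rG_\Phi$ then cuts out a Borel sub-edge-set of $\Phi$ containing no closed loops.

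The main technical difficulty is in the forward direction and lies in executing the three Borel choices compatibly on a common conull set. The symmetrization step in particular cannot be skipped, since a raw Borel section produced by Lusin--Novikov need not satisfy $s_0(x,y)=s_0(y,x)^{-1}$ and must be repaired on a fundamental domain of the edge-reversal involution; without this, the cancellations needed in step three would only produce elements of $\mathrm{Iso}(\rG)$ rather than the identity, and $s$ would fail to be a homomorphism.
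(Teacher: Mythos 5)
A preliminary remark: the paper does not prove this proposition at all --- it is quoted verbatim from Popa--Shlyakhtenko--Vaes \cite[Proposition 6.5]{PoShVa20} --- so your argument can only be measured against that source and on its own merits. Your direction (i)$\Rightarrow$(ii) is essentially correct and is the expected argument: Lusin--Novikov gives a Borel section $s_0$ of $q$ over the treeing $T$, symmetrizing on a Borel fundamental domain of the edge-reversal involution arranges $s_0(x,y)=s_0(y,x)^{-1}$, and extending multiplicatively along the a.e.\ unique non-backtracking $T$-geodesics produces a Borel homomorphic section $s:\mathcal R_\rG\to\rG$; acyclicity plus the symmetry is exactly what makes the junction cancellations collapse to units, giving $s(z,y)s(y,x)=s(z,x)$, and then $((y,x),g)\mapsto s(y,x)g$ with $\delta_{(y,x)}={\rm Ad}\,s(y,x)$ identifies $\rG$ with ${\rm Iso}(\rG)\rtimes\mathcal R_\rG$ up to null sets. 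Your insistence on the symmetrization step is the right caution, and the measurability bookkeeping (partition by $T$-distance, invariance only a.e.) is routine.

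The genuine gap is in (ii)$\Rightarrow$(i). The claim that a splitting $s:\mathcal R\to\rG_\Phi$ of the free groupoid on a graphing $\Phi$ ``cuts out a Borel sub-edge-set of $\Phi$ containing no closed loops'' does not yield a treeing: a treeing must be acyclic \emph{and} still generate $\mathcal R$, and the naturally selected edge set $\{e=(y,x)\in\Phi : s(y,x)=e\}$, while acyclic by your cancellation argument, need not generate. On a component, every multiplicative section is of the form $s(y,x)=\sigma(y)\sigma(x)^{-1}$, where $\sigma(v)$ is an arbitrary reduced path from a basepoint to $v$; taking $\Phi$ a $4$-cycle on $a,b,c,d$ with $\sigma(a)$ trivial, $\sigma(b)=(a\to b)$, $\sigma(c)=(a\to d\to c)$, $\sigma(d)=(a\to b\to c\to d)$, the only selected edge is $ab$, so the selected subgraph is a forest that does not span --- and such a choice can clearly be made measurably on a set of positive measure. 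What a splitting of $\rG_\Phi$ honestly gives is a fiberwise free action of $\mathcal R$ on the measurable field of universal covers of the components of $(X,\Phi)$, and to conclude treeability one then needs the measured analogue of ``a free action on a (field of) trees forces freeness/treeability'' --- which is essentially the nontrivial content of the implication and is not supplied by your sketch. (An alternative route: by Feldman--Moore realize $\mathcal R$ as the orbit relation of an action of a free group $F$, apply (ii) to $X\rtimes F$ to get a Borel cocycle $c:\mathcal R\to F$ with $c(y,x)\cdot x=y$, and then invoke freeness of a diagonal action together with stability of treeability under subrelations and class-bijective extensions; this too requires real further input.) So the forward implication of your proposal stands, but the converse as written has a missing idea and a step that, in the specific mechanism you propose, fails.
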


\subsection{Unitary representations and \texorpdfstring{$1$}--cohomology}

Given a collection of Hilbert spaces $\{\H_x\}_{x \in X}$, the Hilbert bundle $X \ast \H$ is the set of pairs $X \ast \H = \{(x, \xi_x) : x \in X, \xi_x \in \H_x\}$. 
A section $\xi$ of $X \ast \H$ is a map $x \mapsto \xi_x \in \H_x$. 

A {\it measurable Hilbert bundle} is a Hilbert bundle $X \ast \H$ endowed with a $\sigma$-algebra generated by the maps $\{(x, \xi_x) \mapsto \<\xi_x, \xi^n_x\>\}_{n = 1}^\infty$ for a {\it fundamental sequence of sections} $\{\xi^n\}_{n = 1}^\infty$ satisfying 

$(i)$ $\H_x = \ol{\spn \{\xi^n_x\}_{n = 1}^\infty}$ for each $x \in X$, and 

$(ii)$ the maps $\{x \mapsto \|\xi^n_x\|\}_{n = 1}^\infty$ are measurable. 

It is a useful fact that the $\sigma$-algebra of any measurable Hilbert bundle can be generated by an {\it orthonormal fundamental sequence of sections}, i.e. sections which moreover satisfy 

$(iii)$ $\{\xi^n_x\}_{n = 1}^{\infty}$ is an orthonormal basis of $\H_x$ for $x \in X$ with $\dim \H_x = \infty$, and if $\dim \H_x < \infty$, the sequence $\{\xi^n_x\}_{n = 1}^{\dim \H_x}$ is an orthonormal basis and $\xi^n_x = 0$ for $n > \dim \H_x$. 
\\

A {\it measurable section} of $X \ast \H$ is a section $\xi$ such that $x \mapsto (x,\xi_x) \in X \ast \H$ is a measurable map, or equivalently, such that the maps $\{x \mapsto \<\xi_x, \xi^n_x\>\}_{n = 1}^\infty$ are measurable for the fundamental sequence of sections $\{\xi^n\}_{n = 1}^{\infty}$. We let $S(X \ast \H)$ denote the vector space of measurable sections, identifying $\mu$-a.e. equal sections. It is also useful to reserve some notation for the sections with constant norm: 
$$
S_1(X \ast \H)=\{\xi\in S(X \ast \H) : \norm{\xi_x}_{\H_x}=1 \textup{ a.e.}\}.
$$ The elements in $S_1(X \ast \H)$ are called {\it normalized } sections. As hinted, we will often abuse the notation and confuse the map $x \mapsto (x, \xi_x)$ with $\xi$.  We then consider the {\it direct integral} 
\begin{align*}
\int_X^{\oplus} \H_x d\mu(x) = \{\xi \in S(X \ast \H) : \int_X \|\xi(x)\|^2 d\mu(x) < \infty\}
\end{align*}
which is a Hilbert space with inner product $\<\xi, \eta\> = \int_X \<\xi_x, \eta_x\> d\mu(x)$. If $a \in L^\infty(X)$ and $\xi \in \int_X^{\oplus} \H_x d\mu(x)$ we denote by $a\xi$ or $\xi a$ the element of $\int_X^{\oplus} \H_x d\mu(x)$ given by $[a\xi](x) = [\xi a](x) = a(x)\xi_x$. If $\{\xi^n\}_{n = 1}^{\infty}$ is an orthonormal fundamental sequence of sections, any $\xi \in \int_X^{\oplus} \H_x d\mu(x)$ with $\esssup_{x\in X} \norm{\xi_x}<+\infty$ has an expansion $\xi = \sum_{n = 1}^\infty a_n\xi^n$ where $a_n\in L^\infty(X)$ is given by $a_n(x) = \<\xi_x, \xi^n_x\>_{\H_x} $. 

\begin{defn}\label{unitaryrep}
A {\it unitary (resp. orthogonal) representation} of $\rG$ on a complex (real) measurable Hilbert bundle $X \ast \H$, with $X=\rG^{(0)}$ and a map $\rG\ni g\mapsto \pi(g) \in \U(\H_{\d(g)}, \H_{\r(g)})$ (in the real case, $\U(\H, \K)$ denotes the set of orthogonal maps from $\H$ onto $\K$) such that
$$
\pi(gh)=\pi(g)\pi(h), \quad\textup{ for almost all } (g,h)\in\rG^{(2)}
$$
and such that $\rG\ni g \mapsto \<\pi(g)\xi_{\d(g)}, \eta_{\r(g)}\>$ is a measurable map for all $\xi, \eta \in S(X \ast \H)$. 
\end{defn}

\begin{ex}
Given a measurable Hilbert bundle $X \ast \K$ with orthonormal fundamental sequence $\Xi=\{\xi^n\}_{n=1}^\infty$, one can always define the {\it identity representation} ${\rm id}_\Xi$ by the formula 
$$
{\rm id}_\Xi(g)\xi^n_{\d(g)}=\xi^n_{\r(g)},
$$ for each $g\in\rG$, $n\in\N$.
\end{ex}

\begin{ex} \label{leftreg}
The {\it (left) regular representation} $\lambda_\rG$ of $\rG$ is obtained by taking $\H_x = \ell^2(\rG^x)$ for each $x \in X=\rG^{(0)}$, forming the measurable Hilbert bundle $X \ast \H$ with any fundamental sequence such that $S(X \ast \H)=\{\xi\textup{ is a measurable function}: \xi_x\in \H_{\r(x)}\}$, and considering the identity representation with respect to this fundamental sequence. An example of such a fundamental sequence is achieved by fixing a topology in $\rG$ that generates the Borel structure and taking any sup-norm dense sequence $\{f_n\}_{n=1}^\infty$ in $\mathcal C_{\rm c}(\rG)$ to define $\xi_x^n=f_n\vert_{\rG^x}$.

In this case, one has in a natural way that $$
\int_X^\oplus \H_x\dd \mu(x)\cong L^2(\rG,\mu_\rG) 
$$ and the action of $\rG$ is given by 
$$
\lambda_\rG(g)\xi(h)=\xi(g^{-1}h),\quad\text{ for } (g,h)\in\rG^{(2)} \text{ and }\xi\in \ell^2(\rG^x). 
$$
More suggestively, for $ g\in \rG $ let $ \delta_{g}\in \ell^{2}(\rG^{\r(g)}) $ be the indicator function of $ \{g\}\sub \rG^{\r(g)}$. Then $ \lambda_\rG(g) \delta_{h} = \delta_{gh} $ for all $ (g,h)\in \rG^{(2)}$. This induces a representation $ [[\lambda_\rG]] $ of $ [[\rG]] $ on $ L^{2}(\rG) $. Then $ L(\rG) $ may also be defined as the von Neumann algebra generated by the partial isometries $ v_{\sigma} \coloneqq [[\lambda_\rG]](\sigma)\in \mathbb B(L^{2}(\rG,\mu_\rG)) $.
\end{ex}

\begin{ex}\label{tensors}
    Given two representations $\pi_i$ on $X \ast \H^i$ ($i=1,2$), one may form their tensor product $\pi_1\otimes\pi_2$ by constructing the Hilbert bundle $X \ast \H$, with fibers $\H_x=\H_x^1\otimes \H_x^2$ and a fundamental sequence given by $\xi^{i,j}=\xi^i_1\otimes \xi_2^j$, in terms of the fundamental sequences $\{\xi_i^n\}_{n=0}^\infty$ of $X\ast \H^i$. The formula for $\pi_1\otimes\pi_2$ on simple tensors is $$\pi_1\otimes\pi_2(g)(\xi_1\otimes\xi_2)=\pi_1(g)\xi_1\otimes\pi_2(g)\xi_2.$$
\end{ex}

\begin{defn} \label{equiv}
Given representations $\pi$ on $X \ast \H$ and $\rho$ on $X \ast \K$, we say that $\pi$ and $\rho$ are {\it unitarily equivalent} if there is a family of unitaries $\{U_x \in \U(\H_x, \K_x)\}_{x \in X}$ with 
$$
U_{\r(g)}\pi(g) = \rho(g)U_{\d(g)} \quad \text{for all }g \in \rG,
$$
and such that $x \mapsto U_x\xi_x$ is in $S(X \ast \K)$ for each $\xi \in S(X \ast \H)$. 
\end{defn}

Unitary equivalence allows us to identify two representations as the same. An example of a natural unitary equivalence is presented below.

\begin{lem}[Fell's Absorption Principle for Groupoids]
  Let $ \pi $ be a representation of $ \rG $ on $ X\ast \H $ and $ \lambda_\rG $ the left regular representation of $ \rG $. Then for any orthonormal fundamental sequence of sections $ \Xi=\{\xi^n\}_{n=1}^\infty $ for the bundle $ X\ast \H $, we have that $ \pi\otimes \lambda_\rG $ is unitarily equivalent to $ {\rm id}_{\Xi}\otimes \lambda_\rG $
\end{lem}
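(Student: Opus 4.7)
The plan is to mimic the classical proof of Fell's absorption principle for groups, which produces an explicit unitary intertwiner on the common underlying Hilbert bundle. Observe that both $\pi\otimes\lambda_\rG$ and ${\rm id}_\Xi\otimes\lambda_\rG$ are representations on the same bundle $X\ast(\H\otimes\ell^2(\rG^{\cdot}))$ with fibers $\H_x\otimes\ell^2(\rG^x)$. For each $h\in\rG$, both $\pi(h)$ and ${\rm id}_\Xi(h)$ are unitaries $\H_{\d(h)}\to\H_{\r(h)}$, so
\[
V_h \;\coloneqq\; {\rm id}_\Xi(h)\circ\pi(h)^{-1}\colon\H_{\r(h)}\to\H_{\r(h)}
\]
is unitary. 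The proposed intertwiner $U$ is defined fiberwise by
\[
U_x\colon\H_x\otimes\ell^2(\rG^x)\to\H_x\otimes\ell^2(\rG^x),\qquad U_x(\eta\otimes\delta_h)\coloneqq V_h(\eta)\otimes\delta_h,
\]
for $\eta\in\H_x$ and $h\in\rG^x$. Under the orthogonal decomposition $\H_x\otimes\ell^2(\rG^x)=\bigoplus_{h\in\rG^x}\H_x\otimes\C\delta_h$, the operator $U_x$ is block-diagonal with unitary blocks $V_h$, so each $U_x$ is unitary. In the degenerate case $X=\{*\}$ of a group, ${\rm id}_\Xi(g)=\mathrm{id}_\H$ and the formula collapses to the classical $U(\eta\otimes\delta_g)=\pi(g)^{-1}\eta\otimes\delta_g$.

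To verify the intertwining relation $U_{\r(g)}(\pi(g)\otimes\lambda_\rG(g))=({\rm id}_\Xi(g)\otimes\lambda_\rG(g))U_{\d(g)}$, I would compute both sides on a typical vector $\eta\otimes\delta_h$ with $\eta\in\H_{\d(g)}$ and $h\in\rG^{\d(g)}$. Using the multiplicativity of $\pi$ and ${\rm id}_\Xi$, together with $(gh)^{-1}=h^{-1}g^{-1}$, the left-hand side equals $V_{gh}(\pi(g)\eta)\otimes\delta_{gh}={\rm id}_\Xi(gh)\pi(h)^{-1}\eta\otimes\delta_{gh}={\rm id}_\Xi(g){\rm id}_\Xi(h)\pi(h)^{-1}\eta\otimes\delta_{gh}$, while the right-hand side equals ${\rm id}_\Xi(g)V_h(\eta)\otimes\delta_{gh}={\rm id}_\Xi(g){\rm id}_\Xi(h)\pi(h)^{-1}\eta\otimes\delta_{gh}$. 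These agree, so $U$ intertwines the two representations.

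The main technical hurdle is not the algebraic identity (which is essentially the classical proof) but the measurability requirement in Definition \ref{equiv}: I must check that $x\mapsto U_x\zeta_x$ lies in $S(X\ast(\H\otimes\ell^2(\rG^{\cdot})))$ for every measurable section $\zeta$. Since the bundle admits the orthonormal fundamental sequence $\{\xi^n_x\otimes\delta_h\}_{n,h}$ (indexed using a countable Borel enumeration of the fibers of $\d$, which exists by the discreteness of $\rG$), it suffices to verify measurability of the matrix coefficients $x\mapsto\langle U_x(\xi^n_x\otimes\delta_h),\xi^m_x\otimes\delta_k\rangle=\delta_{h,k}\langle\pi(h)^{-1}\xi^n_x,\xi^m_{\d(h)}\rangle$. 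The latter is the coefficient of a matrix entry of $\pi$ in the fundamental sequence, which is measurable by Definition \ref{unitaryrep}. Assembling these pieces and using that elements of $S(X\ast\H)$ expand countably in $\Xi$ establishes that $U$ is a measurable family of unitaries and hence a unitary equivalence in the sense of Definition \ref{equiv}.
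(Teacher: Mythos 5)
Your proposal is correct and takes essentially the same route as the paper: an explicit fiberwise unitary acting block-diagonally over the vectors $\delta_h$, with your $U_x$ being precisely the adjoint of the paper's intertwiner (which sends $\xi^n_x\otimes\delta_g\mapsto\pi(g)\xi^n_{\d(g)}\otimes\delta_g$), so you simply verify the intertwining identity in the reverse direction, which is equivalent. The only difference is that the paper declares measurability of $x\mapsto U_x\zeta_x$ to be obvious, while you spell it out via matrix coefficients against the fundamental sequence (using a countable Borel enumeration of the $\r$-fibers, rather than the $\d$-fibers you mention, since $\rG^x=\r^{-1}(x)$); this is a harmless elaboration of the same argument.
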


\begin{proof}
  Let $ \Xi = \{\xi^{n}\}_{n=1}^{\infty} $ be an orthonormal fundamental sequence of sections for $ X\ast \H $. For $ g,h\in \rG $ and $n, m\in\N $, we compute
  \begin{align*}
    \inp{\pi(g) \xi_{\d(g)}^{n}\otimes \delta_{g}}{\pi(h) \xi_{\d(h)}^{m}\otimes \delta_{h}} &= \inp{\pi(g) \xi_{\d(g)}^{n}}{\pi(h) \xi_{\d(h)}^{m}}\cdot \inp{\delta_{g}}{\delta_{h}}\\
    &= \inp{\pi(g) \xi_{\d(g)}^{n}}{\pi(h) \xi_{\d(h)}^{m}}\cdot \delta_{g=h} \\
    &= \inp{\pi(g) \xi_{\d(g)}^{n}}{\pi(g) \xi_{\d(g)}^{m}}\cdot \delta_{g=h} \\ &=\delta_{g=h}\cdot \delta_{n=m} \\
    &= \inp{\xi_{\r(g)}^{n}\otimes \delta_{g}}{\xi_{\r(h)}^{m}\otimes \delta_{h}}
  \end{align*}

  So, for every $ x\in X $, setting $ U_{x}(\xi_{x}^{n}\otimes \delta_{g}) = \pi(g) \xi_{\d(g)}^{n} \otimes \delta_{g} $ defines a unitary on $ \H_{x}\otimes \ell^{2}(\rG^{x}) $. Now let $(g,h)\in \rG^{(2)}$, $n\in \N$, we see that 
\begin{align*}
    U_{\r(g)}\big({\rm id}_\Xi\otimes \lambda_\rG\big)(g)[\xi_{\r(h)}^n\otimes\delta_h]&=U_{\r(g)}(\xi_{\r(g)}^n\otimes\delta_{gh}) \\
    &=\pi(gh)\xi_{\d(h)}^n\otimes\delta_{gh} \\
    &=\big(\pi\otimes \lambda_\rG\big)(g)[\pi(h)\xi_{\d(h)}^n\otimes\delta_h] \\
    &=\big(\pi\otimes \lambda_\rG\big)(g)U_{\d(g)}[\xi_{\r(h)}^n\otimes\delta_h].
\end{align*} 
The computation above implies that $U_{\r(g)}\big({\rm id}_\Xi\otimes \lambda_\rG\big)(g)=\big(\pi\otimes \lambda_\rG\big)(g)U_{\d(g)}$, for all $g\in\rG$. The fact that $\{U_x\}_{x\in X}$ maps measurable sections into measurable sections is obvious. \end{proof}

\begin{defn}\label{weakcont}
We say that $\pi$ is {\it weakly contained} in $\rho$, denoted $\pi \prec \rho$, if for any $\varepsilon > 0$, $\xi \in S(X \ast \H)$, and $E \subset \rG$ with $\mu_\rG(E) < \infty$, there exists $\{\eta^1, \dots, \eta^m\} \subset S(X \ast \K)$ with
$$
\mu_\rG\Big(\{g\in E : |\< \pi(g)\xi_{\d(g)}, \xi_{\r(g)} \> - \sum_{i = 1}^m \< \rho(g)\eta^i_{\d(g)}, \eta^i_{\r(g)} \>| \geq \varepsilon\}\Big) < \varepsilon. 
$$ 
\end{defn}

\begin{defn}
    Let $\pi$ be a representation of the discrete measured groupoid $\rG$ on a Hilbert bundle $X\ast \H$. We say that a measurable section $\xi\in S(X\ast \H)$ is \emph{invariant} if 
    \[
    \pi(g)\xi_{\d(g)}=\xi_{\r(g)}, \text{ for almost all }g\in\rG.
    \]
    We say that $\pi$ is ergodic if the subspace of invariant sections in $\int_X^\oplus \H_x\dd \mu(x)$ is trivial.
\end{defn}

Now we introduce the definitions of weakly mixing and mixing representations. They were taken from \cite[Definition 3.10]{GaLu17} and \cite[Definition 4.4]{Ki17}, respectively.

\begin{defn}\label{mixingreps}
  Let $\pi$ be a representation of a discrete measured groupoid $\rG$ on a Hilbert bundle $X\ast \H$. Then $\pi$ will be called \begin{enumerate}
      \item[(i)]  \textit{weak mixing} if, for every $\varepsilon >0$ and every $n\in \N$, and sections $ \eta^1,\ldots, \eta^n\in S(X\ast\H) $, there exists $ \si\in [\rG] $ such that
\[
  \int_{X} |\langle\eta^i_x,\pi(x\si)\eta_{\d(x\sigma)}^j\rangle| \dd{\mu(x)} \leq \varepsilon
\]
for every $i,j=1,\ldots,n$.
\item[(ii)] {\it mixing} or $c_0$ if 
for every $\varepsilon, \delta > 0$ and every pair of normalized sections $\xi, \eta \in S(X \ast \H)$, there is $E \subset X$ with $\mu(X \setminus E) < \delta$ such that 
$$
\left|\{g \in \rG^{E}_x : |\<\pi(g)\xi_x, \eta_{\r(g)}\>| > \varepsilon\}\right| < \infty  \quad \text{for $\mu$-a.e. $x \in E$.}
$$
  \end{enumerate} 
\end{defn}

\begin{defn}\label{cohom}
A {\it $1$-cocycle} for a representation $\pi$ on $X \ast \H$ is a measurable map $\rG\ni g \mapsto b(g) \in \H_{\r(g)}\subset X\ast \H$ such that 
\[ 
b(gh) = b(g) + \pi(g)b(h) \quad \text{for all } (g,h)\in \rG^{(2)}.
\] 
The $1$-cocycle $b$ is a {\it $1$-coboundary} if there is a measurable section $\xi$ of $X \ast \H$ such that 
\[
b(g) = \xi_{\r(g)} - \pi(g)\xi_{\d(g)} \quad \textup{for  $\mu_\rG$-a.e. $g\in\rG$}.
\]
A pair of $1$-cocycles $b$ and $b'$ are {\it cohomologous} if $b - b'$ is a $1$-coboundary. The set of $1$-cocycles of $\pi$ is denoted $Z^1(\rG,\pi)$ and the set of $1$-coboundaries by $B^1(\rG,\pi)$. The quotient 
\[
H^1(\rG,\pi)=Z^1(\rG,\pi)/B^1(\rG,\pi)
\]
is called the $1$-cohomology group of the representation $\pi$ and it is typically endowed with the quotient topology after giving $Z^1(\rG,\pi)$ the topology of convergence in the measure $\mu$.
\end{defn}

The following result is due to \textcite[Theorem 3.19, Lemma 3.20]{An05}.

\begin{lem}[Anatharaman-Delaroche]\label{bound}
Let $b$ be a 1-coboundary associated with the representation $\pi$ of $\rG$ in $X\ast \H$, then there exists a Borel subset $E\subset X$ of positive measure, such that for every $x\in E$, $\sup\{\norm{b(g)}: g\in \rG_E^x\}<\infty$. If $\rG$ is ergodic, then this last condition is equivalent to $b$ being a 1-coboundary and, moreover, $E$ can be chosen to have measure $1$.
\end{lem}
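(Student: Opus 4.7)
My plan is to prove the result in two stages, addressing the forward direction first and then the converse (with its measure-one strengthening) under ergodicity.

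For the forward direction, writing $b(g) = \xi_{\r(g)} - \pi(g)\xi_{\d(g)}$ for a measurable section $\xi$ of $X \ast \H$, the function $x \mapsto \|\xi_x\|$ is measurable and a.e.\ finite, so for some $N > 0$ the set $E := \{x \in X : \|\xi_x\| \leq N\}$ has positive measure. For $x \in E$ and $g \in \rG_E^x$, the triangle inequality together with the isometry of $\pi(g)$ yields $\|b(g)\| \leq \|\xi_{\r(g)}\| + \|\xi_{\d(g)}\| \leq 2N$, which even gives a uniform bound on $\rG_E^E$.

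For the converse under ergodicity, I would mimic the classical Bruhat--Tits argument (an affine isometric action with a bounded orbit has a fixed point at the circumcenter of the orbit), carrying it out fiberwise over $X$. For each $x \in E$ the set $S_x := \{b(g) : g \in \rG_E^x\} \subset \H_x$ is bounded, so it has a well-defined circumcenter $\xi_x$, the unique minimizer of $v \mapsto \sup_{w \in S_x} \|v - w\|$. For any $h \in \rG_E^E$, left multiplication by $h$ is a bijection $\rG_E^{\d(h)} \to \rG_E^{\r(h)}$, and the cocycle identity $b(hg) = b(h) + \pi(h)b(g)$ translates into the set-level relation $\pi(h) S_{\d(h)} = S_{\r(h)} - b(h)$. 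Since the circumcenter commutes with isometries and translations, this forces $\pi(h)\xi_{\d(h)} = \xi_{\r(h)} - b(h)$, i.e., the coboundary identity holds on $\rG_E^E$. I then extend $\xi$ to (a conull subset of) $X$ using ergodicity: for $\mu$-a.e.\ $y$ (those in the $\mathcal{R}_\rG$-saturation of $E$), one measurably picks some $g = g(y) \in \rG$ with $\r(g) = y$ and $\d(g) \in E$ and defines $\xi_y := b(g) + \pi(g)\xi_{\d(g)}$. A short cocycle-identity computation, combined with the coboundary relation on $\rG_E^E$, shows this is independent of the choice of $g$ and that the resulting $\xi$ satisfies the coboundary relation for $\mu_\rG$-a.e.\ element of $\rG$. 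The measure-one statement follows from the observation that the set of $x$ through which some positive-measure subset satisfying the pointwise boundedness condition passes is $\mathcal{R}_\rG$-invariant; by ergodicity it must then be conull.

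The main technical obstacle is producing $\xi$ as a genuinely \emph{measurable} section. On $E$ this reduces to a Borel measurable selection argument for the multifunction $x \mapsto \overline{\mathrm{conv}}(S_x)$, together with the Borel measurability of the circumcenter map on nonempty bounded subsets of a Hilbert space; expanding against an orthonormal fundamental sequence for $X \ast \H$ lets one reduce to countably many Borel operations on scalar-valued $L^\infty$ functions. For the extension, one measurably selects $g(y)$ by exhausting $\rG$ with a countable Borel family of bisections and picking, for each $y$ in the saturation of $E$, the first bisection carrying $y$ into $E$.
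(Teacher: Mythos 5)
The paper never proves this lemma at all: it is quoted from Anantharaman--Delaroche (the citation to [An05, Theorem 3.19, Lemma 3.20]), so there is no internal argument to compare yours against. That said, most of your proposal is sound and is the expected argument behind the cited result. The forward direction is correct (and gives the uniform bound $2N$ on $\rG_E^E$). For the converse, the fiberwise Chebyshev/circumcenter argument works: $S_x=\{b(g):g\in\rG_E^x\}$ is nonempty (it contains $b(x)=0$) and bounded, left translation by $h\in\rG_E^E$ is a bijection $\rG_E^{\d(h)}\to\rG_E^{\r(h)}$, so the cocycle identity gives $S_{\r(h)}=b(h)+\pi(h)S_{\d(h)}$ and uniqueness of the circumcenter yields the coboundary identity on $\rG_E^E$; your extension of $\xi$ to the (conull, by ergodicity) saturation of $E$ is exactly the computation the paper itself carries out inside the proof of Lemma \ref{unboundedness}. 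The measurability issues you flag (Lusin--Novikov enumeration of $\rG_E^E$, measurable selection of approximate circumcenters among rational convex combinations) are genuine but routine, and your sketch of how to handle them is adequate.

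The genuine gap is the final clause, that $E$ can be chosen of measure $1$. Your argument --- the union of all positive-measure sets satisfying the pointwise condition is $\mathcal{R}_\rG$-invariant, hence conull --- does not produce a single conull $E$ with the property, because the condition is not stable under unions: enlarging $E$ enlarges every $\rG_E^x$ and hence every supremum, so conullity of the union of good sets does not make the union (or any conull set) good. In fact, under the natural reading the clause is false. Take $\rG$ to be the orbit equivalence relation of an ergodic probability measure preserving transformation (say an irrational rotation of $[0,1)$), $\pi$ the trivial representation on constant one-dimensional fibers, and $b(x,y)=\xi(x)-\xi(y)$ with $\xi(x)=1/x$, which is finite a.e.\ but unbounded on every orbit. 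This $b$ is a coboundary, yet for any conull $E$ almost every orbit is entirely contained in $E$ (the saturation of the null set $E^{c}$ is null), so $\sup\{\norm{b(g)}:g\in\rG_E^x\}=\infty$ for a.e.\ $x\in E$; no conull $E$ satisfies the condition. What is true, and what the paper later proves following Hoff as Lemma \ref{unboundedness}, is the exhaustion version: there is a sequence $E_n$ with $\mu(\bigcup_n E_n)=1$ and $b$ bounded on each $\rG\vert_{E_n}$. Alternatively, your extension step already establishes the different fact that the coboundary identity $b(g)=\xi_{\r(g)}-\pi(g)\xi_{\d(g)}$ holds for $\mu_\rG$-a.e.\ $g\in\rG$, which may be what the ``measure $1$'' clause is meant to convey. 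As written, though, your justification of that clause does not work, and no argument can salvage its literal reading.
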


However, we are interested in a particular class of $1$-cocycles that we will introduce now. 
\begin{defn}
    Let $b$ be a $1$-cocycle associated with the representation $\pi$ of $\rG$ in $X\ast \H$. Then $b$ is said to be \emph{bounded} if there exists a sequence of measurable subsets $\{E_n\}_{n = 1}^\infty$ of $X$ with $\mu(\bigcup_{n = 1}^\infty E_n) = 1$ and $\sup\{\|b(g)\|: g \in \rG\vert_{E_n}\} < \infty$ for each $n \geq 1$. On the contrary, $b$ is called \emph{unbounded} if it is not bounded. Moreover, $b$ is called \emph{strongly unbounded} if there exists a $ \delta >0 $ such that for all $ R>0 $ there is a $ \sigma\in [\rG] $ such that 
    $$
        \mu(\{x\in X : \norm{b(x\sigma)} \geq R\}) > \delta.
    $$
\end{defn}

In the equivalence relation setting, Hoff characterized coboundaries as unbounded $1$-cocycles \cite[Lemma 2.1]{Ho16}. His proof also works here.
\begin{lem}\label{unboundedness}
A $1$-cocycle $b$ for a representation $\pi$ of $\rG$ on $X \ast \H$ is a coboundary if and only if it is bounded.
\end{lem}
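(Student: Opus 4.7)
The plan is as follows. The forward direction is immediate from the triangle inequality: if $b(g) = \xi_{\r(g)} - \pi(g)\xi_{\d(g)}$ for a measurable section $\xi$, then $\|b(g)\| \leq \|\xi_{\r(g)}\| + \|\xi_{\d(g)}\|$, so the increasing sublevel sets $E_n \coloneqq \{x \in X : \|\xi_x\| \leq n\}$ exhaust $X$ in measure (as $\xi$ is almost everywhere finite) and witness boundedness of $b$, with $\sup\{\|b(g)\| : g \in \rG\vert_{E_n}\} \leq 2n$.

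For the reverse direction, the idea is to build a trivializing section $\xi$ via a Chebyshev-center (circumcenter) construction performed fiberwise. First I would pass to the ergodic decomposition of $\rG$, writing $X = \bigsqcup_t X_t$ into $\rG$-invariant pieces on which $\rG\vert_{X_t}$ is ergodic. For each such $t$, the boundedness hypothesis restricts to give that $b\vert_{\rG\vert_{X_t}}$ is uniformly bounded on $\rG\vert_{E_n \cap X_t}$, and $E_n \cap X_t$ has positive measure in $X_t$ for $n$ large enough. Invoking the ergodic case of Lemma \ref{bound}, one concludes that for almost every $x \in X_t$ the set $\{\|b(g)\| : g \in \rG^x\}$ is bounded, using that $X_t$ is $\rG$-invariant so that $\rG^x = (\rG\vert_{X_t})^x$ when $x \in X_t$.

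Equipped with this uniform bound, define $C_x \coloneqq \overline{\operatorname{conv}}\{b(g) : g \in \rG^x\} \subset \H_x$, which is a bounded, closed, convex set for almost every $x$. The cocycle identity $b(gh) = b(g) + \pi(g)b(h)$ translates into the equivariance $C_{\r(g)} = \pi(g)C_{\d(g)} + b(g)$, so the affine isometry $v \mapsto \pi(g)v + b(g)$ carries $C_{\d(g)}$ onto $C_{\r(g)}$. Setting $\xi_x$ equal to the Chebyshev center of $C_x$ then forces $\xi_{\r(g)} = \pi(g)\xi_{\d(g)} + b(g)$, by the uniqueness of circumcenters under isometries, exhibiting $b$ as the coboundary of $\xi$.

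The main obstacle is verifying that the section $\xi$ so constructed is measurable. This relies on checking that $x \mapsto C_x$ is measurable as a map into the space of nonempty bounded closed convex subsets of the separable fibers, and that the Chebyshev center is a Borel functional on such subsets. Both facts are standard and can be verified using a fixed orthonormal fundamental sequence of sections together with countable approximations of $\rG^x$. Once measurability is in place, the ergodic-component construction patches measurably into a global section of $X \ast \H$ trivializing $b$.
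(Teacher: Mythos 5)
Your forward direction is exactly the paper's argument and is fine. The reverse direction, however, has a fatal gap at the step where you invoke Lemma \ref{bound} to conclude that for almost every $x$ the whole set $\{\norm{b(g)} : g \in \rG^x\}$ is bounded. The boundedness hypothesis only controls arrows with \emph{both} endpoints in $E_n$, and Lemma \ref{bound} (resp.\ \cite[Lemma 3.21]{An05}) likewise only controls $\rG_E^x$, i.e.\ arrows whose \emph{source} lies in $E$; neither says anything about $g\in\rG^x$ with $\d(g)$ outside the good set. In fact the claim is false in general: take an ergodic measure-preserving equivalence relation $\mathcal R$, the trivial representation on constant fibers $\R$, and $b(x,y)=\xi(x)-\xi(y)$ for an essentially unbounded measurable $\xi$. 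This $b$ is a coboundary, hence bounded, but the set $\{x : \sup_{y\sim x}|\xi(y)|\le M\}$ is invariant and contained in $\{|\xi|\le M\}$, so by ergodicity it is null for every $M$; hence $\sup\{|b(g)| : g\in \mathcal R^x\}=\infty$ for a.e.\ $x$. Consequently your sets $C_x$ are typically unbounded, they have no Chebyshev center, and the construction collapses at its central step.

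The circumcenter idea itself is sound (it is essentially what underlies Anantharaman-Delaroche's lemma), but it must be run on $C_x \coloneqq \overline{\operatorname{conv}}\{b(h) : h \in \rG_E^x\}$ for a suitable set $E$ of full (or at least positive, then saturated) measure, where boundedness is actually available; note the equivariance $C_{\r(g)} = b(g)+\pi(g)C_{\d(g)}$ survives because $\rG_E^{\r(g)} = g\,\rG_E^{\d(g)}$ for every $g$, and the resulting identity $b(g)=\xi_{\r(g)}-\pi(g)\xi_{\d(g)}$ then holds on the co-null set $\rG\vert_E$, which suffices. Your appeal to the ergodic decomposition is also shakier than you suggest: the ergodic components are in general an uncountable family of null sets, so both the application of Lemma \ref{bound} componentwise and the final measurable patching require a disintegration argument you have not supplied. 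The paper avoids all of this: it applies \cite[Lemma 3.21]{An05} directly to each $\rG\vert_{E_n}$ (no ergodicity needed) to get sections $\xi^n$, extends each $\xi^n$ to the saturation $F_n$ of $E_n$ by the cocycle identity (checking independence of the chosen arrow), and patches via $n_x=\min\{n : x\in F_n\}$, using invariance of the $F_n$. As written, your proof does not establish the lemma.
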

\begin{proof}
If $b$ is a coboundary, then there is a $\xi \in S(X \ast \H)$ such that $b(g) = \xi_{\r(g)} - \pi(g)\xi_{\d(g)}$ for $\mu_\rG$-almost every $g\in \rG$. Then for $n \geq 1$, we can set $E_n = \{x \in X:\norm{\xi_x} \leq n\}$ and that makes $\bigcup_{n = 1}^\infty E_n = X$ true. Furthermore, we check that for $g \in \rG\vert_{E_n}$ we have 
$$
\norm{b(g)}\leq \norm{\xi_{\r(g)}}+\norm{\xi_{\d(g)}}\leq 2n < \infty.
$$

For the only if part, we assume the existence of the sequence of measurable subsets $\{E_n\}_{n = 1}^\infty$ that almost everywhere exhausts $X$ and $\sup\{\norm{b(g)}: g \in \rG\vert_{E_n}\} < \infty$ for each $n \geq 1$. Then by \cite[Lemma 3.21]{An05}, we know that for each $n$ we know that $b$ restricted to $\rG\vert_{E_n}$ is a coboundary and therefore we can find a section $\xi^n \in S(E_n \ast \H)$ with \begin{equation}\label{formulae}
    b(g) = \xi_{\r(g)}^n - \pi(g)\xi_{\d(g)}^n, 
\end{equation} for $\mu_\rG$-almost every $g\in\rG\vert_{E_n}$. But we notice that the formula \eqref{formulae} also defines $\xi^n$ in the saturation $F_n=\bigcup_{g\in \rG} \r(g(E_n))$, when reinterpreted as $\xi_{\r(g)}^n=b(g)+\pi(g)\xi_{\d(g)}^n$. This definition does not depend on the choice of $g$; if $g,h \in \rG\vert_{E_n}$ and $x=\r(g)=\r(h)$, then $(g^{-1},h)$ is a composable pair, $g^{-1}h$ lies in $\rG\vert_{E_n}$ and
\begin{align*}
[b(g)+\pi(g)\xi_{\d(g)}^n] -  [b(h)+\pi(h)\xi_{\d(h)}^n] &= -\pi(g)b(g^{-1}h) + \pi(g)\xi_{\d(g)}^n - \pi(h)\xi_{\d(h)}^n\\
 &= \pi(g)\big(-b(g^{-1}h) + \xi_{\d(g)}^n -\pi(g^{-1}h)\xi_{\d(h)}^n \big)\\
 &=0. 
\end{align*}

Thus we have a sequence of sections $\{\xi^n\}_{n = 1}^\infty$ satisfying \eqref{formulae} but for $\mu_\rG$-a.e. $g \in \rG\vert_{F_n}$. Now for $x \in F = \bigcup_{n = 1}^\infty F_n$, define 
\begin{align*}
\xi_x = \xi^{n_x}_x, \quad \text{where} \quad n_x = \min \{n \geq 1 : x \in F_n\},
\end{align*} and let $\xi_x=0$ for $x\not\in F$. Note that if $x,y\in X$ lie in the same orbit, then $n_x = n_y$ since each $F_n$ is invariant.
Thus $$b(g) = \xi_{\r(g)}^{n_{\r(g)}} - \pi(g)\xi_{\d(g)}^{n_{\d(g)}}=\xi_{\r(g)} - \pi(g)\xi_{\d(g)},$$ for all $g\in \rG\vert_F$.
\end{proof}

\begin{ex}\label{exstrongunbound}
    Suppose $G$ is a countable group acting in a measure-preserving way on the standard probability space $(X,\mu)$ and we form the transformation groupoid $\rG=X\rtimes_\theta G$ as in Example \ref{transformation}. Then any group representation $\pi:G\to\mathcal U(\mathcal H)$ gives rise to a representation $\pi_\rG$ of $\rG$ and any $1$-cocycle $b$ for $\pi$ gives a $1$-cocycle $b_\rG$ for $\pi_\rG$ as follows. In fact, $\pi_\rG$ represents $\rG$ on the Hilbert bundle $X\ast \mathcal K$ with constant fibers $\mathcal K_x=\mathcal H$, for all $x\in X$ and we have \begin{align*}
        \pi_\rG(g,x)&=\pi(g), \quad\forall (g,x)\in \rG \\
        b_\rG(g,x)&=b(g), \quad\forall (g,x)\in \rG.
    \end{align*} One can check that $\pi_\rG$ is (weak) mixing if $\pi$ is (weak) mixing and $b_\rG$ is (strongly) unbounded if $b$ is unbounded. Moreover, if $\pi \prec\rho$ for
another representation $\rho$ of $G$, then $\pi_\rG \prec\rho_\rG$ as well. When $\pi$ is the left regular representation, then $\pi_\rG$ is unitarily equivalent to the left regular representation of $\rG$. In particular, if $G$ admits a (strongly) unbounded $1$-cocycle into an orthogonal representation weakly contained in the left regular representation, then so does $\rG=X\rtimes_\theta G$.
\end{ex}

Hoff proves in \cite[Lemma 2.2]{Ho16} that, for equivalence relations, unboundedness coincides with strong unboundedness. His proof uses deep results of Feldman-Moore, which are simply not available for more general groupoids, leaving us with the question of whether this is true in general or not. We were able to prove that this is the case for a large class of groupoids, introduced before as semidirect product groupoids. This is the content of Proposition \ref{unbound}. With that objective in mind, let us first treat the subcase of group bundles.

\begin{lem}\label{gpbun}
    If $\rG = (G_x)_{x\in X}$ is a group bundle and $b$ is an unbounded $1$-cocycle, then $b$ is strongly unbounded.
\end{lem}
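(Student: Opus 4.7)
The plan is to pass through a measurable dichotomy on the supremum function. For a group bundle $\rG=(G_x)_{x\in X}$ we have $\d=\r$, so it makes sense to define
\[
\beta(x)\coloneqq \sup_{g\in G_x}\norm{b(g)}_{\H_x}\in[0,\infty].
\]
I expect to show that $b$ is bounded if and only if $\beta<\infty$ $\mu$-a.e.: the forward direction follows by setting $E_n=\{\beta\leq n\}$, and the converse because any bounded exhaustion $\{E_n\}$ forces $\beta\vert_{E_n}$ to be finite, hence $\beta<\infty$ on the full-measure set $\bigcup_n E_n$. Consequently, the hypothesis that $b$ is unbounded gives $\mu(A)>0$ for $A\coloneqq\{\beta=\infty\}$, and the natural candidate for the constant in the definition of strong unboundedness is any $\delta<\mu(A)$; I will take $\delta=\mu(A)/2$.

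Next, I would fix a decomposition of the standard Borel groupoid $\rG$ into countably many Borel bisections $\rG=\bigsqcup_{n\geq 1}\sigma_n$ (which exists since $\d$ has countable fibres) and use it to verify measurability of $\beta$ via $\beta(x)=\sup_n \norm{b(\sigma_n(x))}$ (reading $\sigma_n(x)$ as the unique element of $\sigma_n x$, or undefined where that set is empty). For a given $R>0$, consider
\[
A_n^R\coloneqq\{x\in A : \sigma_n(x)\text{ is defined and }\norm{b(\sigma_n(x))}\geq R\}.
\]
The $A_n^R$ cover $A$: for each $x\in A$, $\beta(x)=\infty$ produces some $g\in G_x$ with $\norm{b(g)}\geq R$, and $g$ sits in some $\sigma_n$. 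Choose $N=N(R)$ so that $\mu\big(\bigcup_{n\leq N}A_n^R\big)>\mu(A)/2$ and disjointify by $D_n\coloneqq A_n^R\setminus\bigcup_{m<n}A_m^R$ for $n\leq N$.

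The final step is to upgrade the partial bisection $\bigsqcup_{n\leq N}(\sigma_n\cap \d^{-1}(D_n))$ to an element of $[\rG]$. This is exactly where the group bundle hypothesis is used: because $\d=\r$, I can extend by the identity arrows $1_x\in G_x$ on the leftover set $X\setminus\bigsqcup_n D_n$, producing a full bisection $\tau\in[\rG]$ whose fibre $\tau\cap \rG_x$ is a single element of $G_x$ for every $x$. By construction $\norm{b(x\tau)}\geq R$ for every $x\in\bigsqcup_n D_n$, so
\[
\mu\big(\{x\in X : \norm{b(x\tau)}\geq R\}\big)\geq \mu\Big(\bigsqcup_n D_n\Big)>\mu(A)/2=\delta,
\]
certifying strong unboundedness. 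The main obstacle is precisely this completion step: for a general groupoid the units cannot pair up points lying in different $\d$-fibres, so the trick of filling in with identities breaks down, which explains why the argument (and the lemma) are confined to the group bundle setting. The remaining technicalities are routine measurability checks for $\beta$, $A_n^R$, and $D_n$, all of which are handled by the countable bisection decomposition.
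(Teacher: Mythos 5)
Your proof is correct, and its overall strategy matches the paper's: isolate a positive-measure set of units where the fibrewise supremum $\sup_{g\in G_x}\norm{b(g)}$ is infinite, then for each $R>0$ make a measurable selection of one high-norm arrow per fibre over (most of) that set, and finally extend the resulting partial bisection to an element of $[\rG]$ by identity arrows — which is exactly where $\d=\r$ enters, as you note. The differences lie in two technical steps. First, where the paper invokes \cite[Lemma 3.21]{An05} to produce a non-null set $E$ with $\sup_{g\in G_x}\norm{b(g)}=+\infty$ for all $x\in E$, you obtain $A=\{\beta=\infty\}$ by an elementary dichotomy that works precisely because in a group bundle $\rG\vert_{E}=\bigcup_{x\in E}G_x$, so boundedness of $b$ is equivalent to $\beta<\infty$ a.e.; together with your measurability check for $\beta$ via a fixed countable decomposition into Borel bisections, this is a legitimate self-contained replacement. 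Second, for the selection step the paper applies Lusin--Novikov to $N^{-1}((R,+\infty))\cap\rG\vert_E$ and takes a single piece $U_n$ asserted to be mapped by $\d$ isomorphically \emph{onto} $E$; the partition form of Lusin--Novikov only gives injectivity of $\d$ on each piece, so one needs either the uniformization (Borel section) form of that theorem or a patching argument. Your version — covering more than half of $A$ by finitely many of the sets $A_n^R$, disjointifying, and gluing the corresponding pieces of the bisection decomposition before extending by identities — supplies exactly such a patching argument, at the harmless cost of taking $\delta=\mu(A)/2$ instead of $\mu(A)$ (the constant $\delta$ is still independent of $R$, as required). So the proposal is sound and, at that selection step, arguably a bit more careful than the paper's phrasing.
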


\begin{proof}
    By \cite[Lemma 3.21]{An05}, there is a measurable, non-null subset $E\sub X$ such that
    \[
        \sup_{g\in G_x} \norm{b(g)} = +\infty \quad \text{for all }x\in E.
    \]
    Let $\delta\coloneqq \mu(E)>0$. Given $R>0$, for each $x\in X$ there is some $g_x\in G_x$ with $\norm{g_x} \geq R$.

Let $ N:\rG\to [0,+\infty) $ denote the composition $ \rG \xrightarrow{b} X\ast \H \xrightarrow{\norm{\cdot}} [0,+\infty) $. Noting that $ N $ is Borel and the restriction of the domain map
\[
   {\rm  d}: N^{-1}((R,+\infty))\cap \rG\vert_{E} \to E
\]
is countable-to-one, by Lusin-Novikov \cite[Theorem 18.10]{Ke95} there is a countable collection of measurable subsets $ U_{n}\sub N^{-1}((R,+\infty))\cap \rG\vert_{E}  $ such that $ N^{-1}((R,+\infty))\cap \rG\vert_{E} =\bigcup_{n}U_{n} $ and each $ d:U_{n}\to E $ is a Borel isomorphism for $ n\in \N $. Note that each $U_n$ is in fact an element of $[\rG\vert_E]\sub [[\rG]]$. Hence, choosing some $ U = U_{n} $, we may extend $ U $ to an element $ \sigma\in [\rG] $.
    It follows that 
    \[
        \mu(\{x\in X: \norm{b(x\sigma)} \geq R\}) \geq \mu(E) = \delta >0,
    \] finishing the proof. \end{proof}

\begin{prop}\label{unbound}      
    Let $b$ be a 1-coboundary associated with the representation $\pi$ of $\rG$ in $X\ast \H$. If $b$ is strongly unbounded, then it is unbounded. For semidirect product groupoids, the two notions coincide.
\end{prop}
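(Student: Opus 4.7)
My plan is to handle the two assertions separately. For the first, ``strongly unbounded $\Rightarrow$ unbounded'', I will argue by contrapositive using the characterization of bounded cocycles as coboundaries (Lemma \ref{unboundedness}). For the semidirect product case, I will decompose the cocycle into its ``group bundle'' and ``equivalence relation'' components and appeal respectively to Lemma \ref{gpbun} and \cite[Lemma 2.2]{Ho16}. (I also read the statement as referring to a $1$-cocycle $b$; treating it as a $1$-coboundary would make the claim trivial, so ``$1$-coboundary'' appears to be a typo.)

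For the first implication, assume $b$ is bounded, so by Lemma \ref{unboundedness} there is $\xi \in S(X \ast \H)$ with $b(g) = \xi_{\r(g)} - \pi(g)\xi_{\d(g)}$ for $\mu_\rG$-a.e.\ $g$. Evaluating at $g = x\sigma$ for $\sigma \in [\rG]$ and using that $\pi$ acts by isometries yields
\[
\|b(x\sigma)\| \leq \|\xi_x\| + \|\xi_{\d(x\sigma)}\|.
\]
By invariance of $\mu$ (Proposition \ref{invr}), the map $x \mapsto \d(x\sigma)$ is a measure-preserving bijection of $X$, so given $\delta > 0$ one can choose $R_0$ with $\mu(\{x : \|\xi_x\| > R_0\}) < \delta/2$ and deduce $\mu(\{x : \|b(x\sigma)\| \geq 2R_0\}) < \delta$ uniformly in $\sigma$, which is precisely the negation of strong unboundedness.

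For the semidirect case $\rG = \Gamma \rtimes_\delta \mathcal R$, I will use the canonical section $s : \mathcal R \to \rG$, $s(y,x) = ((y,x), 1_{G_y})$, which is an injective measurable groupoid homomorphism. Every $g = ((y,x), h) \in \rG$ factors as $g = s(y,x) \cdot ((x,x), h)$ with $((x,x), h) \in \Gamma$, so the cocycle identity yields $\|b(g)\| \leq \|b(s(y,x))\| + \|b((x,x), h)\|$. If both $b|_\Gamma$ and $b \circ s$ are bounded, intersecting the two corresponding exhausting sequences of subsets of $X$ shows $b$ is itself bounded on $\rG$. Contrapositively, if $b$ is unbounded then at least one of $b|_\Gamma$ (on the group bundle $\Gamma$) or $b \circ s$ (on the equivalence relation $\mathcal R$) is unbounded. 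In the first scenario Lemma \ref{gpbun} supplies $\sigma_0 \in [\Gamma] \subseteq [\rG]$ witnessing strong unboundedness of $b|_\Gamma$, and this same $\sigma_0$ works for $b$ on $\rG$ since $\Gamma$ is a subgroupoid of $\rG$ with the same unit space. In the second, \cite[Lemma 2.2]{Ho16} supplies $\tau \in [\mathcal R]$ witnessing strong unboundedness of $b \circ s$, and the lift $\sigma := s(\tau) \in [\rG]$ satisfies $x\sigma = s(x\tau)$ for a.e.\ $x$, transferring the estimate to $b$ on $\rG$.

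The main obstacle is the factorization argument in the semidirect case: one must verify that $b|_\Gamma$ and $b \circ s$ jointly control the boundedness of $b$, and that the inverse-semigroup maps $[\Gamma] \hookrightarrow [\rG]$ and $s_\ast : [\mathcal R] \to [\rG]$ preserve the measurability and measure-preservation needed to cleanly lift the strong unboundedness estimates. Once these bookkeeping details are in place, the rest of the argument reduces to invoking the two previously established cases.
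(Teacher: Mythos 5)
Your proposal is correct and follows essentially the same route as the paper: the first implication is the same measure estimate pitting boundedness against strong unboundedness (you phrase it contrapositively via the coboundary section from Lemma \ref{unboundedness}, the paper by contradiction using a single large set $E$ on which $b$ is uniformly bounded), and the semidirect-product case uses the identical decomposition into ${\rm Iso}(\rG)$ and $\mathcal{R}_\rG$, Lemma \ref{gpbun}, Hoff's equivalence-relation result, and the intersection of the two exhausting sequences, together with the observation that $[{\rm Iso}(\rG)]$ and $[\mathcal{R}_\rG]$ embed into $[\rG]$. The only nits are cosmetic: the canonical section should read $s(y,x)=((y,x),1_{G_x})$ since the group coordinate lives in $G_{\d}$, and the boundary case $\norm{b(x\sigma)}=2R_0$ requires either strict inequalities or taking $R=2R_0+1$.
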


\begin{proof}
Let $\delta>0$ as in the definition of strong unboundedness and suppose that $b$ is bounded. Then there exists a measurable subset $E \subset X$ such that $\mu(E) > 1- \frac{\delta}{2}$ and $R = \sup \{\|b(g)\| : g\in \rG\vert_E\} < \infty.$. But by assumption, there must be a $\si\in[\rG]$ such that $F = \{x\in X: \|b(x\si)\| > R\}$ has measure at least $\delta$. Then we would have $\d\big((E\cap F)\si\big)\subset E^c$, which implies 
$$
\mu(\d\big((E\cap F)\si\big))=\mu(\r\big((E\cap F)\si\big))=\mu(E\cap F)\leq \mu(E^c).
$$ 
In consequence, 
\begin{equation*}
\mu(E\cup F)\geq\mu(E)+\mu(F)-\mu(E^c)=2\mu(E)+\mu(F)-1>1.
\end{equation*}

Now, if $b$ is unbounded and $\rG$ is isomorphic with a semidirect product groupoid, we can, without loss of generality, assume that $\rG=\Gamma\rtimes_{\delta}\mathcal{R}$, with $\mathcal{R}=\mathcal{R}_\rG$ and $\Gamma={\rm Iso}(\rG)$. Since $\rG$ contains copies of both $\Gamma$ and $\mathcal{R}$, the $1$-cocycle $b$ restricts to $1$-cocycles $b\vert_\Gamma$ and $b\vert_{\mathcal{R}}$, for the representations $\pi\vert_{\Gamma}$ and $\pi\vert_{\mathcal{R}}$, respectively. If any of these is strongly unbounded, the proof is finished since $[\Gamma],[\mathcal R]\subset [\rG]$ in a natural manner. Hence we assume that it is not the case. But then Lemma \ref{gpbun} and \cite[Lemma 2.1]{Ho16} imply that both $b\vert_\Gamma$ and $b\vert_{\mathcal{R}}$ are bounded. Then, by definition, there are sequences of measurable subsets $\{E_n\}_{n = 1}^\infty, \{G_n\}_{n = 1}^\infty$ of $X$ with $\mu(\bigcup_{n = 1}^\infty E_n) =\mu(\bigcup_{n = 1}^\infty G_n)= 1$ and the quantities $s_1=\sup\{\|b(g)\|: g \in {\mathcal R}\vert_{E_n}\}$, $s_2=\sup\{\|b(g)\|: g \in {\Gamma}\vert_{G_n}\}$ are finite for each $n \geq 1$. Define the double-indexed sequence of measurable sets 
$$
A_{n,m}=E_n\cap G_m
$$ 
and note that 
$$
\mu\big(\cup_{n,m = 1}^\infty A_{n,m}\big)\geq\mu\big((\cup_{n}^\infty E_n)\cap(\cup_{m = 1}^\infty G_{m})\big)=1,
$$ 
while, for $(g,y,x)\in \rG$ such that $y,x\in A_{n,m}$, one has $$\norm{b(g,y,x)}\leq \norm{b(g,x,x)}+\norm{b(\e_x,y,x)}\leq s_1+s_2<\infty$$ and so we conclude that $b$ is a bounded $1$-cocycle.
\end{proof}

In particular, it follows that all non-trivial $1$-cocycles on semidirect product groupoids are strongly unbounded.

\subsection{Malleable deformations}

In the following, we survey the basics of Sorin Popa's deformation rigidity theory as well as various relevant approaches/results from \textcite{dSHH21} which motivate this paper's main results.

The intuitive idea behind deformation/rigidity theory is to study rigidity results for a von Neumann algebra $ M $ which can be deformed inside another algebra $ \widetilde{M}\supseteq M $ by an action $ \alpha: \R \to \Aut(\widetilde{M}) $ whilst containing subalgebras which are \textit{rigid} with respect to the deformation.

Let $(M,\tau)$ be a tracial von Neumann algebra and $ \Aut(M) $ the group of trace-preserving $^*$-automorphisms of $M$. Then we have the following fundamental definition due to Popa.

\begin{defn}[Popa]
  Let $ \widetilde{M}\supseteq M $ be a trace-preserving inclusion of tracial von Neumann algebras.
  \begin{enumerate}
    \item A \textit{malleable deformation $ \alpha $ of $ M $ inside $ \widetilde{M} $} is a strongly-continuous action $ \alpha:\R\to \Aut(\widetilde{M}) $ such that $ \alpha_{t}(x)\xrightarrow{\norm{\cdot}_{2}}x $ as $ t\to 0 $ for every $ x\in \widetilde{M} $.
    \item An \textit{s-malleable deformation $ (\alpha,\beta) $ of $ M $ inside $ \widetilde{M} $ } is a malleable deformation $ \alpha $ combined with a distinguished involution $ \beta\in\Aut(\widetilde{M}) $ such that $ \beta\vert_{M} = id $ and $ \beta \alpha_{t} = \alpha_{-t} \beta $ for all $ t\in \R $.
  \end{enumerate}
\end{defn}

On its own, deformations do not give much information about the algebra itself; however, they do provide one with a quantitative way to locate subalgebras with prescribed properties that force them to be \textit{rigid} with respect to the deformation. Explicitly, given a malleable deformation $ \alpha $ of $ M $ inside $ \widetilde{M} $, a subalgebra $ Q\sub M $ is $ \alpha $-\textit{rigid} if the deformation converges uniformly on the unit ball of $ Q $, i.e.
$$
  \lim_{t\to 0}\sup_{x\in (Q)_{1}} \norm{\alpha_{t}(x)-x}_{2} =0.
$$

Locating $\alpha$-rigid subalgebras is a fundamental step to applying deformation/rigidity theory to a problem. An attempt can be made to directly prove rigidity by harnessing the structural properties of the subalgebra, such as the relative property ($T$). The recent notion of maximal rigidity for subalgebras studied by de Santiago, Hayes,
Hoff, and Sinclair in \cite{dSHH21} has had relative success in providing a more indirect way of proving rigidity. 

\begin{defn}[\textcite{dSHH21}]\label{ben}
  Let $ (\alpha,\beta) $ be an $ s $-malleable deformation $ M $ inside $ \widetilde{M} $ where $ M $ and $ \widetilde{M} $ are both assumed to be finite. Then an $ \alpha $-rigid subalgebra $ Q\sub M $ is \textit{maximal $ \alpha $-rigid} if whenever $ P\sub M $ is an $ \alpha $-rigid subalgebra containing $ Q $, it follows that $ P = Q $.
\end{defn}

\begin{defn}
  Let $ \alpha $ be a malleable deformation of $ M $ inside $ \widetilde{M} $ where $ M$, $\widetilde{M} $ are finite. Suppose that $ Q\sub M $ is an $ \alpha $-rigid subalgebra of $ M $. Then a subalgebra $ P\sub M $ is an \textit{$\alpha$-rigid envelope of $ Q $} if 
  \begin{itemize}
    \item $ P $ is $ \alpha $-rigid
    \item $ P\supseteq Q $
    \item if $ N \sub M $ is $ \alpha $-rigid and $ N\supseteq Q $, then $ N\sub P $.
  \end{itemize}
\end{defn}

Given a rigid subalgebra $Q\sub M$, one often wishes to know whether rigidity passes to some specific subalgebra $N\sub M$ which contains $Q$ (as is the case with various weak normalizers of $Q$). Allowing the concept of rigid envelopes, it suffices then to prove that this $N$ is contained in a rigid envelope of $Q$. This approach is demonstrated in \cite[Cor. 6.7]{dSHH21} and shows the relevance of identifying rigid envelopes.

One would be justified in being skeptical as to whether rigid envelopes even exist for given subalgebras. In fact, they do not exist in general; however, some of the main results in \textcite{dSHH21} show that they do in many natural cases. We shall use these results in crucial ways and thus sketch them here for reference.

\begin{thm}[\textcite{dSHH21}] \label{rigenvelope}
  Let $ (\alpha,\beta) $ be an $ s $-malleable deformation of tracial von Neumann algebras $ M\sub \widetilde{M} $. Then any $ \alpha $-rigid subalgebra $ Q\sub M $ with $ Q^{\prime}\cap \widetilde{M}\sub M $ is contained in a unique maximal $ \alpha $-rigid subalgebra $ P\sub M $.
\end{thm}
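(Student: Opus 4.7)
The plan is to deduce both existence and uniqueness of the maximal $\alpha$-rigid envelope from a single closure property: \emph{if $N_{1}, N_{2}\subset M$ are $\alpha$-rigid subalgebras with $Q\subset N_{1}\cap N_{2}$, then $N_{1}\vee N_{2}$ is $\alpha$-rigid}. Granted this, uniqueness is automatic---two maximal rigid envelopes $P_{1}, P_{2}$ of $Q$ would sit inside $P_{1}\vee P_{2}$, forcing $P_{1} = P_{2}$---while existence follows by Zorn's lemma once one verifies that the WOT-closure of a directed union of rigid subalgebras remains rigid, which is immediate from Kaplansky density applied to the uniform estimate $\sup_{x \in (N)_{1}} \|\alpha_{t}(x) - x\|_{2} < \varepsilon$.

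The first reduction is to reformulate rigidity via Popa's transversality inequality, which holds for any $s$-malleable deformation: for $x\in (M)_{1}$,
\[
   \|\alpha_{2t}(x) - x\|_{2} \;\leq\; 2\,\|\alpha_{t}(x) - E_{M}(\alpha_{t}(x))\|_{2}.
\]
This is derived from $\beta\vert_{M} = \mathrm{id}$ and $\beta\alpha_{t} = \alpha_{-t}\beta$ via a short orthogonality computation in $L^{2}(\widetilde{M})$. As a consequence, $\alpha$-rigidity of a subalgebra $N\subset M$ is equivalent to the uniform decay on $(N)_{1}$ of the ``error vectors'' $r_{t}(x) \coloneqq \alpha_{t}(x) - E_{M}(\alpha_{t}(x))$, which live in $L^{2}(\widetilde{M})\ominus L^{2}(M)$.

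The heart of the argument is the closure property. To show $N_{1}\vee N_{2}$ is rigid, I would approximate elements of $(N_{1}\vee N_{2})_{1}$ in $\|\cdot\|_{2}$ by linear combinations of products $ab$ with $a\in (N_{1})_{1},\ b\in (N_{2})_{1}$, and expand
\[
   r_{t}(ab) \;=\; r_{t}(a)\,\alpha_{t}(b) \;+\; E_{M}(\alpha_{t}(a))\,r_{t}(b) \;+\; (\text{conditional-expectation correction}).
\]
Terms involving only a single error vector are controlled by rigidity of the corresponding $N_{i}$ together with the norm-one property of $\alpha_{t}$. The subtler contributions live in $L^{2}(\widetilde{M})\ominus L^{2}(M)$, viewed as a $Q$-bimodule: since $a, b$ both commute with $Q$, the error vectors $r_{t}(a), r_{t}(b)$ are asymptotically $Q$-central in $L^{2}(\widetilde{M})$, and the hypothesis $Q'\cap \widetilde{M} \subset M$ forces any genuinely $Q$-central vector in $L^{2}(\widetilde{M})\ominus L^{2}(M)$ to vanish. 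Quantifying this asymptotic collapse into a uniform $\|\cdot\|_{2}$-estimate is what really drives the proof.

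The main obstacle is precisely this bimodule estimate: one must convert the algebraic hypothesis $Q'\cap \widetilde{M}\subset M$ into a quantitative statement about $Q$-central vectors in the orthogonal complement $L^{2}(\widetilde{M})\ominus L^{2}(M)$, and then combine it with the rigidity of $N_{1}$ and $N_{2}$ to tame the interaction term in the expansion above. The hypothesis $Q'\cap \widetilde{M}\subset M$ is clearly essential: an element of $Q'\cap \widetilde{M}$ lying outside $M$ would provide a persistent $Q$-central direction in $L^{2}(\widetilde{M})\ominus L^{2}(M)$, allowing genuinely incompatible rigid extensions of $Q$ to coexist and simultaneously blocking both uniqueness and the join-closure property on which the whole plan rests.
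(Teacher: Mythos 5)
This theorem is not proved in the paper at all: it is imported from \cite{dSHH21}, so your attempt has to be measured against the actual argument there, and it has two genuine gaps. First, the mechanism you propose for the key closure property (rigidity of $N_1\vee N_2$) does not work. You assert that for $a\in (N_1)_1$, $b\in (N_2)_1$ the error vectors $r_t(a),r_t(b)$ are ``asymptotically $Q$-central \ldots since $a,b$ both commute with $Q$''; but the hypothesis is $Q\subseteq N_1\cap N_2$, not $N_i\subseteq Q'\cap M$, so $a$ and $b$ need not commute with $Q$ and no approximate $Q$-centrality of $r_t(a)$, $r_t(b)$ is available. Moreover, even where approximate centrality could be arranged, the inference you rely on --- ``$L^2(\widetilde{M})\ominus L^2(M)$ contains no $Q$-central vectors (which indeed follows from $Q'\cap\widetilde{M}\subseteq M$), hence almost-central vectors are uniformly small'' --- requires a spectral gap, and none is available at this level of generality: the theorem is for arbitrary tracial $M\subseteq\widetilde{M}$ and arbitrary rigid $Q$ (for instance abelian). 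There is also a structural problem with deducing rigidity of the join from estimates on products $ab$: Kaplansky density only approximates the unit ball of $N_1\vee N_2$ by sums of words of unbounded length with no norm control on the individual terms, so term-by-term estimates do not assemble into the required uniform estimate over $(N_1\vee N_2)_1$.

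Second, the existence step is not ``immediate from Zorn plus Kaplansky''. In a chain (or directed family) of rigid subalgebras each member has its own modulus of rigidity; without a bound uniform over the family, the closure of the union need not be rigid, and your appeal to ``the uniform estimate $\sup_{x\in (N)_1}\norm{\alpha_t(x)-x}_2<\varepsilon$'' assumes exactly what must be proved. This uniformity is the heart of the argument in \cite{dSHH21}: by Popa-style averaging of $\alpha_t(u)u^*$ over unitaries, the hypothesis $Q'\cap\widetilde{M}\subseteq M$ forces the resulting intertwining elements (and their combinations) into $M$, which yields a quantitative, $N$-independent bound: every $\alpha$-rigid $N\supseteq Q$ satisfies $\norm{(\alpha_t-{\rm id})|_{(N)_1}}_{\infty,2}\leq C\,\norm{(\alpha_t-{\rm id})|_{(Q)_1}}_{\infty,2}$ for a universal constant --- precisely the constant $24$ that this paper later invokes from \cite{dSHH21} in the proof of Proposition \ref{maxrig1}. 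This equi-rigidity of all rigid intermediate algebras is what makes both the join-closure property and the passage to the algebra generated by all rigid extensions of $Q$ go through; your outline uses the hypothesis $Q'\cap\widetilde{M}\subseteq M$ only qualitatively and therefore cannot produce it.
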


In the following, we will utilize Popa's transversality inequality \cite[Lemma 2.1]{Po08} and Popa's spectral gap argument \cite{Po07}. We also include the relevant definitions for Hilbert bimodules. For a presentation of this version of the argument, see \cite[Theorem 3.2]{Ho16}.

\begin{defn}
    Let $ N\subseteq M $ be a von Neumann subalgebra. An $M$-$M$ bimodule $ _M \H_M $ is said to be \textit{mixing relative to $ N $} if for any sequence $ (x_{n})_{n=1}^{\infty} $ in $ (M)_{1} $ such that $ \norm{\E_{N}(yx_{n}z)}_{2} \to 0 $ for every $ y,z\in M $, we have that 
    \[
        \lim_{n\to \infty} \sup_{y\in(M)_{1}}|\inp{x_{n} \xi y}{\eta}| = \lim_{n\to \infty} \sup_{y\in(M)_{1}}|\inp{y \xi x_{n} }{\eta}| = 0 \quad \text{ for all } \xi,\eta\in \H
    \]

\end{defn}

\begin{defn}
    An $ M $-$ N $ bimodule $ _M\H_N $ is said to be \textit{weakly contained} in an $ M $-$ N $ bimodule $ _M\K_N $, written $ _M\H_N \prec _M\K_N$, if for any $ \varepsilon>0 $, finite subsets $ F_{1} \sub M $, $ F_{2}\sub N $, and $ \xi\in \H $, there are $ \eta_{1},\ldots, \eta_{n}\in \K $ such that 
    \[
        |\inp{x \xi y}{\xi} - \sum_{j=1}^{n}\inp{x \eta_{j}y }{\eta_{j}} | < \varepsilon \quad \text{for all } x\in F_{1}, y\in F_{2}
    \]

\end{defn}

\begin{lem}[Popa's transversality inequality] \label{transv}
Let $\alpha: \R \to \Aut(\widetilde M)$, $\beta \in \Aut(\widetilde M)$ be an $s$-malleable deformation of $M \subset \widetilde M$. Set $\delta_t(x) = \alpha_t(x) - \E_M(\alpha_t(x))$ for $x \in M$. Then for all $x, y \in M$ and $t \in \R$, \begin{enumerate}
    \item[(i)] $\norm{\delta_{2t}(x)}_2 \leq 2\norm{\alpha_{2t}(x) - x}_2 \leq 4\norm{\delta_t(x)}_2$, and 
    \item[(ii)] $\norm{[\delta_t(x), y]}_2 \leq 2\norm{x}\norm{\alpha_t(y) - y}_2 + \norm{[x, y]}_2.$
\end{enumerate}
\end{lem}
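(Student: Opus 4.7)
The plan is to prove both inequalities by direct algebraic manipulations, with the only nontrivial ingredient being the use of the involution $\beta$ in part (i).

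For part (i), the first inequality $\norm{\delta_{2t}(x)}_2 \leq 2\norm{\alpha_{2t}(x) - x}_2$ is immediate from the identity
\[
\delta_{2t}(x) = \bigl(\alpha_{2t}(x) - x\bigr) - \E_M\bigl(\alpha_{2t}(x) - x\bigr),
\]
which holds since $\E_M(x) = x$ for $x \in M$, followed by the triangle inequality and the fact that $\E_M$ is $\norm{\cdot}_2$-contractive. The second inequality $\norm{\alpha_{2t}(x) - x}_2 \leq 2\norm{\delta_t(x)}_2$ is the heart of the statement and is where the $s$-malleability hypothesis is used. The trick is to rewrite $\alpha_{2t}(x) - x$ using the involution: applying $\beta$ to $\alpha_t(x)$ and using $\beta \alpha_t = \alpha_{-t}\beta$ together with $\beta|_M = \mathrm{id}$ yields $\beta(\alpha_t(x)) = \alpha_{-t}(x)$, and therefore, after applying the trace-preserving automorphism $\alpha_t$,
\[
\norm{\alpha_{2t}(x) - x}_2 = \norm{\alpha_t(x) - \beta(\alpha_t(x))}_2.
\]
Setting $y = \alpha_t(x)$ and noting that $\beta$ fixes $\E_M(y) \in M$, one decomposes $y - \beta(y) = (y - \E_M(y)) - \beta(y - \E_M(y))$; since $\beta$ is trace preserving, another application of the triangle inequality gives $\norm{y - \beta(y)}_2 \leq 2\norm{y - \E_M(y)}_2 = 2\norm{\delta_t(x)}_2$, as desired.

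For part (ii), the strategy is to first establish the bound with $\alpha_t(x)$ in place of $\delta_t(x)$, and then to pass to $\delta_t(x)$ via the conditional expectation. The first step is the algebraic identity obtained by adding and subtracting $\alpha_t([x,y])$:
\[
[\alpha_t(x), y] = [\alpha_t(x), y - \alpha_t(y)] + \alpha_t([x,y]),
\]
from which the triangle inequality, together with $\norm{\alpha_t(x)} = \norm{x}$ and the fact that $\alpha_t$ preserves the 2-norm, yields $\norm{[\alpha_t(x), y]}_2 \leq 2\norm{x}\norm{\alpha_t(y) - y}_2 + \norm{[x,y]}_2$. To pass from $\alpha_t(x)$ to $\delta_t(x)$, observe that since $y \in M$ and $\E_M$ is an $M$-bimodule map, $\E_M([\alpha_t(x), y]) = [\E_M(\alpha_t(x)), y]$, hence $[\delta_t(x), y] = [\alpha_t(x), y] - \E_M([\alpha_t(x), y])$. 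The Pythagorean identity $\norm{z - \E_M(z)}_2 \leq \norm{z}_2$ then gives $\norm{[\delta_t(x), y]}_2 \leq \norm{[\alpha_t(x), y]}_2$, and combining with the previous bound finishes the proof.

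There is no serious obstacle here: the proof is essentially an exercise in bookkeeping. The only genuine idea is the use of $\beta$ in part (i), and its role is to convert the "doubled-time" difference $\alpha_{2t}(x) - x$ into a difference $y - \beta(y)$ that can be controlled by the perpendicular component of $y$ relative to $M$. Part (ii) is a routine commutator computation once one notes that $\E_M$ commutes with $\mathrm{ad}(y)$ for $y\in M$.
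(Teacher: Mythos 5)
Your proof is correct and follows essentially the same route as the paper: the triangle inequality with $\E_M$-contractivity for the first bound in (i), the symmetry $\beta(\alpha_t(x))=\alpha_{-t}(x)$ combined with the decomposition around $\E_M(\alpha_t(x))$ for the second, and in (ii) the contraction $[\delta_t(x),y]=(1-\E_M)([\alpha_t(x),y])$ followed by inserting $\alpha_t(y)$ into the commutator. The only differences are cosmetic (your two-term split $[\alpha_t(x),y-\alpha_t(y)]+\alpha_t([x,y])$ versus the paper's three-term one), so nothing further is needed.
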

\begin{proof} For (i), we see that \begin{align*}
\norm{\delta_t(x)}_2 &\leq \norm{\alpha_t(x) - x}_2 + \norm{x - \E_M(\alpha_t(x))}_2 \\
&=\norm{\alpha_t(x) - x}_2 + \norm{\E_M(x - \alpha_t(x))}_2 \\
&\leq 2\norm{\alpha_t(x) - x}_2,
\end{align*}
and since $\beta \alpha_t = \alpha_{-t}\beta$ and $\beta|_M = {\rm id}$, we have
\begin{align*}
\norm{\alpha_{2t}(x) - x}_2 
&= \norm{\alpha_t(x) - \alpha_{-t}(x)}_2 
\leq \norm{\alpha_t(x) - \E_M(\alpha_t(x))}_2 + \norm{\alpha_{-t}(x) - \E_M(\alpha_t(x))}_2 \\
&= \norm{\delta_t(x)}_2 + \norm{\beta(\alpha_t(x) - \E_M(\alpha_t(x)))}_2
= 2\norm{\delta_t(x)}_2.
\end{align*}
For (2), 
\begin{align*} 
\norm{[\delta_t(x), y]}_2 
& = \norm{(1-E_M)([\alpha_t(x), y])}_2 
\leq \norm{[\alpha_t(x), y]}_2 \\
&\leq \norm{\alpha_t(x)y - \alpha_t(x)\alpha_t(y)}_2 + \norm{[\alpha_t(x), \alpha_t(y)]}_2 + \norm{\alpha_t(y)\alpha_t(x) - y\alpha_t(x)}_2  \\
&\leq 2\norm{x}\norm{\alpha_t(y) - y}_2 + \norm{[x, y]}_2,  
\end{align*} as claimed. \end{proof}

\begin{thm}[Popa's spectral gap argument \cite{Po07}]\label{popaspectralgap}
Let $ (\alpha,\beta) $ be an $ s $-malleable deformation of tracial von Neumann algebras $ M\sub \widetilde{M} $ and assume that $M$ has no amenable direct summands. Suppose further that the orthocomplement bimodule $_{M} L^2(\widetilde{M}) \ominus L^2(M)_M $ is weakly contained in the coarse $M$-$M$ bimodule and mixing relative to an abelian subalgebra $A\sub M$. Then there is a central projection $z\in Z(M)$ such that $M(1-z)$ is prime and 
    \[
    \lim_{t\to0}\sup_{x\in (Mz)_{1}} \norm{\alpha_{t}(x)-x}_{2} =0.
    \]
    In particular, if the convergence $\alpha_t \to{\rm id}$ as $t\to 0$ is not uniform on the unit ball of $M$, then $M$ is not the join of any pair of commuting type $\mathrm{II}$ von Neumann subalgebras.  
    
    
\end{thm}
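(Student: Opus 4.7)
The plan is to carry out Popa's classical spectral gap argument, adapted to the setting where the orthocomplement bimodule $_{M}L^{2}(\widetilde M)\ominus L^{2}(M)_{M}$ is merely mixing relative to an abelian subalgebra $A$ rather than fully mixing. First, the weak containment of this bimodule in the coarse $M$-$M$ bimodule, combined with $M$ having no amenable direct summand, yields the standard spectral gap principle of \cite{Po07}: every bounded sequence $(\xi_n)$ in $L^2(\widetilde M)\ominus L^2(M)$ with $\|[y,\xi_n]\|_2\to 0$ for all $y\in M$ must satisfy $\|\xi_n\|_2\to 0$. Second, let $z\in Z(M)$ be the supremum of central projections $p$ on which $\alpha_t\to{\rm id}$ uniformly on $(Mp)_1$; a patching argument shows that $z$ itself enjoys this property, which establishes the uniform-convergence half of the theorem.

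It remains to show $M(1-z)$ is prime, which we prove by contradiction. After replacing $M$ by $M(1-z)$, we may assume $M=P_1\vee P_2$ with $[P_1,P_2]=0$ and each $P_i$ of type $\mathrm{II}$, while $\alpha$ fails to converge uniformly on any nonzero central corner. Choose $\epsilon>0$, $t_n\searrow 0$, and $x_n\in(M)_1$ with $\|\alpha_{t_n}(x_n)-x_n\|_2\ge\epsilon$. By transversality (Lemma~\ref{transv}(i)), the defect sequence $\xi_n:=\delta_{t_n/2}(x_n)\in L^{2}(\widetilde M)\ominus L^{2}(M)$ satisfies $\|\xi_n\|_2\ge\epsilon/4$. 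Since $P_2$ is type $\mathrm{II}$ and $A$ is abelian, a Popa intertwining-by-bimodules argument produces unitaries $u_n\in\U(P_2)$ with $\|\E_A(au_nb)\|_2\to 0$ for all $a,b\in M$. Combining $(u_n)$ with $(x_n)$, one assembles a modified bounded sequence $\eta_n\in L^{2}(\widetilde M)\ominus L^{2}(M)$ of $2$-norm $\gtrsim\epsilon$ for which $\|[y,\eta_n]\|_2\to 0$ for all $y\in M$: commutators against $P_1$ vanish by Lemma~\ref{transv}(ii) together with $[u_n,P_1]=0$, while commutators against $P_2$ vanish because the mixing of $\H$ relative to $A$ converts the relative-$A$ decay of $(u_n)$ into decay of all bilinear-form interactions with elements of $\H$. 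The spectral gap principle of the first paragraph then forces $\|\eta_n\|_2\to 0$, contradicting the lower bound.

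The main obstacle is precisely this final extraction step, where one must carefully combine the defect sequence $(\xi_n)$ with the intertwining-free sequence $(u_n)\subset\U(P_2)$ to produce a genuinely asymptotically central sequence in $\H$. In the pure tensor-product case $M=P_1\cls\otimes P_2$ this is routine, since a weakly null $(u_n)\subset\U(P_2)$ commutes with $P_1$ and transversality directly handles $P_2$-commutators; in the general join setting, however, the mixing relative to $A$ must be invoked to handle commutators against $P_2$-elements that fail to intertwine into $A$. This is the critical place where the relative-$A$ hypothesis (as opposed to plain mixing) is essential, and it is what makes this spectral gap statement strictly broader than the classical one.
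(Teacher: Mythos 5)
Your list of ingredients (spectral gap from weak containment in the coarse bimodule plus nowhere-amenability, transversality, non-embeddability of a type $\mathrm{II}$ algebra into the abelian $A$, and relative mixing) is the right one, but the way you assemble them in the final step is not viable as sketched. In the correct argument the spectral gap is used \emph{relative to the nonamenable commuting subalgebra}: one fixes finitely many unitaries in $\U(Q)$ (where $Q$ is the nonamenable one of the two commuting type $\mathrm{II}$ subalgebras) witnessing the absence of almost central vectors for the $Q$-$Q$ structure of $L^2(\widetilde M)\ominus L^2(M)$, and tests them against the defects $\delta_t(x)$ of elements $x\in (N)_1$ of the \emph{other} subalgebra, because Lemma \ref{transv}(ii) controls $\norm{[\delta_t(x),u]}_2$ precisely when $[x,u]=0$. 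You instead pick $x_n\in (M)_1$ witnessing non-uniform convergence on all of $M$ and try to build from $\xi_n=\delta_{t_n/2}(x_n)$ and unitaries $u_n\in\U(P_2)$ a sequence $\eta_n$ asymptotically central under \emph{all} of $M$; but the sequence $\eta_n$ is never defined, and for any natural candidate the transversality bound only gives $\norm{[\delta_t(x_n),y]}_2\le 2\norm{\alpha_t(y)-y}_2+\norm{[x_n,y]}_2$, where the term $\norm{[x_n,y]}_2$ has no reason to be small for a general $x_n\in(M)_1$ — the quantifiers are backwards. The claim that commutators against $P_2$ vanish also misuses relative mixing: mixing relative to $A$ controls $\sup_{y\in(M)_1}\lrvert{\inp{u_n\xi y}{\eta}}$ for \emph{fixed} vectors $\xi,\eta\in\H$ and a varying sequence $u_n$ with $\norm{\E_A(au_nb)}_2\to 0$; it gives no control whatsoever on $\norm{[y,\eta_n]}_2$ for fixed $y$ and a varying sequence of vectors $\eta_n$. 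So the ``extraction step'' you flag as the main obstacle is a genuine gap, not a technicality, and I do not see how to close it along your route. Note also that the unitaries avoiding $A$ should be taken in the algebra on which rigidity has already been established (in $N$), since their role is to spread rigidity to the commutant via relative mixing; taking them in the nonamenable side $P_2$ is the wrong side for that step.

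For comparison, the paper's proof is a short reduction: it quotes \cite[Theorem 3.2]{Ho16} for the spectral gap part (which yields the central projection $z$ and rigidity of the deformation on $(N)_1$), and then, in the join setting $M=N\vee Q$ with $[N,Q]=0$ and $Q$ nowhere amenable, upgrades rigidity from $N$ to all of $M$ by noting that $N$ lies in the normalizer of $Q$ and invoking the rigid-envelope result \cite[Corollary 6.7(ii)]{dSHH21}. Your sketch never addresses this upgrade from rigidity on one commuting subalgebra to rigidity on the join, which is exactly the ``slightly modified conclusion'' (commuting type $\mathrm{II}$ subalgebras rather than a tensor product splitting) that the theorem is after; likewise your treatment of the central projection (supremum plus patching) is plausible but left unproved, whereas the paper inherits it from Hoff.
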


\begin{proof}
    Most of the details of this proof are contained in \cite[Theorem 3.2]{Ho16}; however, we have a slightly modified conclusion.

    Suppose that $M\cong N\,\vee Q$ for type $\mathrm{II}$ subalgebras $N,Q\leq M$ with $[N,Q] = 0$. As $M$ is nowhere amenable, we may assume without loss of generality that $Q$ is also nowhere amenable. Then Hoff's proof gives that the deformation must be rigid on $(N)_1$. As $N$ commutes with $Q$, it is contained in the normalizer of $Q$. Hence, appealing to \cite[Corollary 6.7(ii)]{dSHH21}, it follows that the convergence is rigid on both $(Q)_1$ and $(N)_1$, whence it is rigid on all of $(M)_1$. \end{proof}

\section{Gaussian extensions and the associated \texorpdfstring{$s$}--malleable deformations} \label{gaussianc}

In this section we construct the s-malleable deformation that will be used to prove the main results. We recall once more that Gaussian actions have been used to construct $s$-malleable deformation for group von Neumann algebras in \cite{dSHH21, PeSi12, Si11}, crossed products in \cite{Va13} and for equivalence relation von Neumann algebras in \cite{Ho16}. We will follow some of the same reasoning in our wider context. 

\subsection{The Gaussian extension of \texorpdfstring{$\rG$}-} 
Let $ \rG $ be a discrete measured groupoid, $\pi$ an orthogonal representation of $\rG$ on a real Hilbert bundle $X \ast \H$, and let $\{\xi^n\}_{n = 1}^\infty$ be an orthonormal fundamental sequence of sections for $X \ast \H$. For $x\in X$, we consider the measure space
\[
    (\Omega_x, \nu_x) = \prod_{i = 1}^{\dim \H_x} (\R, \tfrac{1}{\sqrt{2\pi}}e^{-s^2/2}ds),
\]
and define $\omega_x: \spn_\R (\{\xi_x^n\}_{n = 1}^{\dim \H_x}) \to \U(L^\infty(\Omega_x))$ by
\[
    \omega_x\left(\sum_{n = 1}^{\dim \H_x} a_n\xi_x^{n}\right) = \exp\left({i\sqrt{2}\sum_{n = 1}^{\dim \H_x} a_nS_x^{n}}\right)
\] 
where $S^n_x$ is the $n$th-coordinate function $S_x^n((s_i)_{i = 1}^{\dim \H_x}) = s_n$ for $1\le n \le \dim \H_x$.

Then $\omega_x$ extends to a $\norm{\cdot}_{\H_x} - \norm{\cdot}_2$ continuous map $\omega_x : \H_x \to \U(L^\infty(\Omega_x))$ satisfying 
\begin{equation} \label{axioms}
\tau(\omega_x(\xi)) = e^{-\|\xi\|^2_{\H_x}}, \quad \omega_x(\xi + \eta) = \omega_x(\xi)\omega_x(\eta), \quad \omega_x(-\xi) = \omega_x(\xi)^*, \quad \forall\xi, \eta \in\H_x.
\end{equation}

For $x \in X$, one also has $D_x = \spn_\CC (\{\omega_x(\xi)\}_{\xi \in \H_x})\subset L^\infty(\Omega_x)$ is WOT-dense in $L^\infty(\Omega_x)$. Now for every $g\in\rG$, define a $^*$-homomorphism $\rho(g): D_{\d(g)} \to L^\infty(\Omega_{\r(g)})$ by 
$$
\rho(g)\omega_{\d(g)}(\xi) = \omega_{\r(g)}(\pi(g)\xi),
$$
which is well defined and $\norm{\cdot}_2$-isometric since \eqref{axioms} implies 
$$
\tau(\omega_{\d(g)}(\eta)^*\omega_{\d(g)}(\xi)) = \tau(\omega_{\r(g)}(\pi(g)\eta)^*\omega_{\r(g)}(\pi(g)\xi)) \quad \forall\xi, \eta \in \H_{\d(g)}.
$$
So $\rho(g)$ extends to a trace-preserving $^*$-isomorphism $\rho(g): L^\infty(\Omega_{\d(g)}) \to L^\infty(\Omega_{\r(g)})$.  
Let $\theta_{g}: \Omega_{\d(g)} \to \Omega_{\r(g)}$ be the induced measure space isomorphism such that $\rho(g)\phi = \phi\circ \theta_{g}^{-1}$ for all $\phi \in L^\infty(\Omega_{\r(g)})$. 

So far, this construction provides a measurable $X$-bundle of commutative von Neumann algebras $\mathcal B=\{L^\infty(\Omega_{x})\}_{x\in X}$ and note that the maps $\{\rho(g)\}_{g\in \rG}$ give us an action of $\rG$ on $\mathcal B$ and hence an action of $\rG$ on the underlying space. Then the natural thing to do is to use this action to provide an extension of $\rG$ by mimicking the transformation groupoid construction. We will then define an $s$-malleable deformation of $L(\rG)$ inside the algebra of the bigger groupoid.

We consider $\widetilde X\equiv X \ast \Omega=\bigsqcup_{x\in X}\{x\}\times \Omega_x$ as a measurable bundle with $\sigma$-algebra generated by the maps $(x, r) \mapsto \omega_x(\sum_{i \in I}a_i\xi^i_x)(r)$ for $I \subset \N$ finite and $a_i \in \R$. The natural measure $\mu \ast \nu$ on $X \ast \Omega$ is given by 
$$
[\mu \ast \nu](E) = \int_X \nu_x(E_x)d\mu(x),
$$
where $E_x = \{s \in \Omega_x: (x, s) \in E\}$. We define the \emph{Gaussian extension of $\rG$} to be the transformation groupoid $\widetilde \rG=(X \ast \Omega)\rtimes_\theta \rG$, explicitly given by 
\begin{itemize}
    \item As a set, $\widetilde \rG=\{(g,x,r)\in \rG\times  (X \ast \Omega)  : \r(g)=x\}$. The unit space is identified with $X \ast \Omega$
    \item The groupoid operations are 
    \begin{align*}
        \r(g,x,r)=(x,r), \quad &\quad\d(g,x,r)=(\d(g),\theta_g^{-1}(r)) \\
        (g,x,r)(h,\d(g),\theta_g^{-1}(r))=(gh,x,r), \quad&\quad (g,x,r)^{-1}=\big(g^{-1},\d(g),\theta_g^{-1}(r)\big)
    \end{align*}
    \item $\widetilde \rG$ inherits a natural measurable structure as a subset of the product $\rG\times  (X \ast \Omega) $. Lastly, $\mu*\nu$ plays the role of the invariant probability measure on $X \ast \Omega$.
\end{itemize}

Then we note that $L(\widetilde \rG)$ contains copies of $L^\infty(\widetilde X,\mu*\nu) $ and $L(\rG)$ such that 
\[
    L(\widetilde \rG)=  \vN{L^\infty(X \ast \Omega), \{u_\sigma\}_{\sigma\in[\rG]}}=\vN{L^\infty(X \ast \Omega), L(\rG)} \subset \B(L^2(\widetilde \rG)).
\]
and we have the relation  $$u_\sigma \phi u_\sigma^*=\theta_\si(\phi),$$ for all $\sigma\in [\rG], \phi\in L^\infty(X\ast \Omega) $. Here $\theta_\si(\phi)$ denotes, naturally, the induced action of $[\rG]$ on $L^\infty(X\ast \Omega)$.

\subsection{The associated \texorpdfstring{$s$}--malleable deformation of \texorpdfstring{$L(\rG)$}-}

Let $ b $ be a $1$-cocycle for the representation $ \pi $ on $ X*\H $. Set $ M = L(\rG) $, $ \widetilde{M}\coloneqq L(\widetilde{\rG}) $. For $ t\in\R $, define $ c_{t}:\widetilde{\rG}\to \mathbb S^{1} $ by
\[
  c_{t}(g,x,r) = \omega_{x}(tb(g))(r),
\] 
which is a multiplicative function (a homomorphism) and it satisfies 
\[
    c_{t_1+t_2}(g,x,r)=c_{t_1}(g,x,r)c_{t_2}(g,x,r).
\] 
Indeed, we verify the first assertion. Given $(g,x,r),(h,\d(g),\theta_g^{-1}(r))\in \widetilde{\rG}$, we see that
\begin{align*}
    c_t(gh,x,r)&=\omega_{x}(tb(g))(r)\omega_{x}(t\pi(g)b(h))(r) \\
&=\omega_{x}(tb(g))(r)\big[\rho(g)\omega_{\d(g)}(tb(h))\big](r) \\
&=\omega_{x}(tb(g))(r)\omega_{\d(g)}(tb(h))\big(\theta_g^{-1}(r)\big) \\
&=c_t(g,x,r)c_t(h,\d(g),\theta_g^{-1}(r)).
\end{align*} 
And this leads us to define, for $ t\in\R $ and $ \sigma\in [\widetilde{\rG}] $, the function $ f_{c_{t},\si}\in \mathcal{U}(L^{\infty}(X\ast \Omega)) $ given by
$$
  f_{c_{t},\sigma}(x,r) = \omega_{x}(tb(x\sigma))(r)=c_t(x\sigma, x,r), 
$$ so that we obtain an SOT-continuous $\mathbb R$-action $ \alpha_{b,t}\in \Aut(\widetilde{M}) $ by using the formula
$$
  \alpha_{b,t}(au_{\sigma}) = f_{c_{t},\sigma}au_{\sigma}.
$$ and extending by linearity. 

Now we compute
\begin{align*}
  \tau(f_{c_{t},\sigma}) &= \int_{X\ast \Omega} f_{c_{t},\sigma} \dd{\mu\ast \nu} = \int_{X} \int_{\Omega_{x}} f_{c_{t},\sigma}(x,r) \dd{\nu_{x}}(r) \dd{\mu(x)} \\
  &= \int_{X}\int_{\Omega_{x}} \omega_{x}(tb(x\sigma))(r) \dd{\nu_{x}}(r) \dd{\mu(x)} \\
  &=\int_{X} e^{-t^2\norm{b(x\sigma)}^{2}}  \dd{\mu(x)}.
\end{align*} So
\begin{align*}
  \norm{\alpha_{b,t}(au_{\sigma})-au_{\sigma}}_{2}^{2} &= \norm{f_{c_{t},\sigma}au_{\sigma}-au_{\sigma}}_{2}^2 \leq \norm{a}^{2} \norm{f_{c_{t},\sigma}-1}_{2}^{2}  = 2\norm{a}^{2}(1-\Re \tau(f_{c_{t},\sigma})) \\
  &= 2\norm{a}^{2}\lr{1-\int_{X} e^{-t^2\norm{b(x\sigma)}^{2}}\dd{\mu(x)}}\xrightarrow{t\to0}0.
\end{align*}
Moreover, if $b$ is bounded, the convergence is uniform. Next, note that defining $\beta_x(\omega_x(\xi))=\omega_x(-\xi)=\omega_x(\xi)^*$ for $x\in X$ gives a $^*$-automorphism of $L^\infty(\Omega_x)$, which leads to $\beta\in\Aut(L^\infty(X\ast \Omega)) $ defined by $\beta(a)(x,r)=\beta_x(a(x,\cdot))(r)$, for $a\in L^\infty(X\ast \Omega)$.

\subsection{A Fock space realization}

Let us first analyze the ``transition'' maps $ \rho(g):L^{\infty}(\Omega_{\d(g)}) \to L^{\infty}(\Omega_{
(g)}) $ utilized in the construction of the Gaussian extension of $ \rG $.

Since each fiber $ \Omega_{x} $ is a finite measure space, we have that $ \cls{L^{\infty}(\Omega_{x})}^{\norm{\cdot}_{2}} = L^{2}(\Omega_{x}) $, thus we may extend $ \rho(g) $ to an isometry $ \rho(g):L^{2}(\Omega_{\d(g)})\to L^{2}(\Omega_{
(g)}) $. Finally, after restricting, we obtain a unitary map
$$
  \rho(g): L^{2}(\Omega_{\d(g)})\ominus \C \to L^{2}(\Omega_{
(g)})\ominus \C
$$

Now form the Hilbert bundle $ X\ast \K $ as follows
\begin{itemize}
  \item $ \K_{x}\coloneqq L^{2}(\Omega_{x})\ominus \C $ for $ x\in X $
  \item $ \sigma $-algebra determined by fundamental sections $ \omega_{0}(\Span_{\Q}\{\xi^{n}\}_{n=1}^{\infty}) $, where $ \{\xi^{n}\}_{n=1}^{\infty} $ as before and 
    $$
    [\omega_{0}(\eta)](x) = \omega_{x}(\eta(x)) - \tau(\omega_{x}(\eta(x))) = \omega_{x}(\eta(x)) - e^{-\norm{\eta(x)}^{2}} \text{ for }\eta\in S(X\ast\H).
    $$
\end{itemize}

As $ \rho(gh) = \rho(g) \rho(h) $ for all $ (g,h)\in \rG^{(2)} $, we see that $ \rho $ is a representation of $ \rG $ on $ X\ast \K $. In fact, $\rho$ corresponds to the \emph{Koopman representation} $\kappa_0$ associated with the action of $\rG$ on $ X\ast \K $.

\begin{lem}\label{fockspace}
  For each $ x\in X $, let $ \widehat{\H}_{x} = \bigoplus_{n=1}^{\infty}(\H_{x}\otimes_{\R}\C)^{\odot n} $. Then the representation $ \rho $ of $ \rG $ on $ X\ast \K $ is unitarily equivalent to the representation $ \widetilde{\pi} = \oplus_{n=1}^{\infty} \pi_{\C}^{\odot n} $ of $ \rG $ on $ X\ast \widehat{\H} $.
\end{lem}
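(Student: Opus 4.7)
The plan is to construct the unitary equivalence via the classical Wiener--It\^o (Segal--Bose) isomorphism applied fiberwise and then assembled measurably. For each $x \in X$, the Gaussian measure $\nu_x$ makes $L^2(\Omega_x)$ isometrically isomorphic, as a Hilbert space, to the symmetric Fock space $\mathscr{F}_s(\H_x \otimes_\R \C) = \bigoplus_{n \geq 0} (\H_x \otimes_\R \C)^{\odot n}$, with the constants $\C \subset L^2(\Omega_x)$ corresponding to the vacuum summand $n = 0$. Restricting to the orthogonal complement of the constants produces the desired unitary $U_x \colon \widehat{\H}_x \to \K_x$. One concrete way to pin $U_x$ down is via coherent states: writing $\operatorname{Exp}(\xi) = \sum_{n \geq 1} \xi^{\odot n}/\sqrt{n!}$ and recalling that $\omega_x(\xi) = e^{i\sqrt{2}\,Y_\xi}$ (where $Y_\xi$ is the canonical real Gaussian of variance $\|\xi\|^2$), one can arrange that $U_x$ maps $\operatorname{Exp}(i\sqrt{2}\,\xi)$ to a definite scalar multiple of the centered Weyl exponential $\omega_x(\xi) - e^{-\|\xi\|^2}$, with the scalar determined by matching norms.

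Intertwining, which is the conceptually cleanest step, reduces by density to checking equivariance on the span of coherent states. On the Fock side one has $\widetilde{\pi}(g)\operatorname{Exp}(\xi) = \operatorname{Exp}(\pi(g)\xi)$ by the very definition of the symmetric tensor powers of $\pi_\C$, while on the Gaussian side $\rho(g)\omega_{\d(g)}(\xi) = \omega_{\r(g)}(\pi(g)\xi)$ by the construction of $\rho$ in Section~\ref{gaussianc}. Since $U_x$ intertwines these two functorial transformation laws term by term, the identity $U_{\r(g)}\widetilde{\pi}(g) = \rho(g) U_{\d(g)}$ for all $g \in \rG$ follows at once.

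The remaining task is to verify the measurability condition in Definition~\ref{equiv}: that $x \mapsto U_x \eta_x$ defines an element of $S(X \ast \K)$ for every $\eta \in S(X \ast \widehat{\H})$. Starting from the given orthonormal fundamental sequence $\{\xi^n\}$ of $X \ast \H$, the (properly normalized) rational-coefficient symmetric tensor monomials in the $\xi^n$'s form a countable generating family of measurable sections for $X \ast \widehat{\H}$. Under $U_x$ these become Wick-ordered (equivalently, Hermite) polynomials in the Gaussians $Y_{\xi^n}(x)$, which are measurable combinations of the prescribed generators $\omega_0(\sum_i a_i \xi^i)$ of the $\sigma$-algebra of $X \ast \K$; a symmetric argument applied to $U_x^{-1}$ completes the verification.

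I expect the main technical obstacle to lie precisely in this last step: the fiberwise Wiener--It\^o story is entirely classical, but transporting it to the groupoid setting requires careful bookkeeping to relate the two chosen orthonormal fundamental sequences (symmetric tensor products of the $\xi^n$'s on one side and the $\omega_0$-exponentials on the other) through explicit Hermite polynomial expansions, and to track normalization conventions (the $\sqrt{2}$ in the Weyl exponential and the $1/\sqrt{n!}$ in the symmetric tensor inner product) carefully enough to confirm that $U_x$ is a genuine unitary rather than a mere bounded intertwiner.
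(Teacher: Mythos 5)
Your proposal is correct and follows essentially the same route as the paper: a fiberwise unitary defined on the span of the Weyl/coherent exponentials by $\omega_x(\xi)\mapsto$ (normalized exponential vector of $i\sqrt{2}\,\xi$), isometry checked by matching $\tau(\omega_x(\eta)^*\omega_x(\xi))$ with the Fock-space inner product, extension by density, intertwining verified on the exponentials using $\rho(g)\omega_{\d(g)}(\xi)=\omega_{\r(g)}(\pi(g)\xi)$, and finally restriction to the orthocomplement of the constants. Your explicit attention to the measurable-sections condition and to the normalization conventions is a welcome addition (the paper treats the former as obvious), but it does not change the argument.
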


\begin{proof}
  For $ x\in X $, set $ U_{x}:D_{x}\to \C\oplus \widehat{\H}_{x} $ by $ \omega_{x}(\xi)\mapsto e^{-\norm{\xi}^{2}}\bigoplus_{n=0}^{\infty}\frac{(i \sqrt{2} \xi)^{\odot n}}{n!} $ for $ \xi\in \H_{x} $. Note that $ U_{x} $ is well-defined and isometric as 
  $$
    \inp{e^{-\norm{\xi}^{2}}\bigoplus_{n=0}^{\infty}\frac{(i \sqrt{2} \xi)^{\odot n}}{n!}}{e^{-\norm{\eta}^{2}}\bigoplus_{n=0}^{\infty}\frac{(i \sqrt{2} \xi)^{\odot n}}{n!}} = \tau(\omega_{x}(\eta)^{*} \omega_{x}(\xi)).
  $$


  Note that $ \C\sub U_{x}(D_{x}) $ as $ z \cdot\omega_{x}(0)\mapsto z $ for all $ z\in \C $. Moreover, one can inductively check via a derivative argument that $ \xi_{1}\odot \cdots \odot \xi_{n} \in \cls{U_{x}(D_{x})} $ for all $ \xi_{i}\in \H_{x} $ and $ n\in \N $. Thus, we can extend $ U_{x} $ to a unitary $ U_{x}: L^{2}(\Omega_{x})\to \C\oplus \widehat{\H}_{x} $.

  Now fix $g\in \rG$. Then, for $\xi\in \H_{\d(g)}$, we have that 
  \[
    [\rm{id}_{\C}\oplus \widehat{\pi}](g) U_{\d(g)} [\omega_{\d(g)}(\xi)] =  U_{\d(g)} \omega_{
\r(g)}(\pi(g)(\xi)) = U_{\d(g)} \rho(g)[\omega_{\d(g)}(\xi)].
  \]
    And so by density we have that for all $f\in L^2(\Omega_{\d(g)})$,
    \[
        [\rm{id}_{\C}\oplus \widehat{\pi}](g) U_{\d(g)} f =U_{\d(g)} \rho(g)f.
    \]
    As $U_{\d(g)}$ fixes $\C$, upon restricting to the orthocomplement we obtain a unitary equivalence.
\end{proof}

\begin{rem}\label{Gaussmix}
    It follows from Lemma \ref{fockspace} that any property of representations that is stable under tensor products passes from $\pi$ to $\rho$. This is the case with weak mixing \cite[Corollary 3.17]{GaLu17} (at least when the groupoid is ergodic). It also follows that if for some subgroupoid $\S\leq \rG$, the representation $\pi|_\S$ is weak mixing, then the action of $\S$ on $X*\Omega$ is weak mixing.
\end{rem}

\section{Bimodules induced by groupoid representations}\label{bimoddddd}

We now proceed to describe the process of inducing $ L(\rG) $-$ L(\rG) $ bimodules from representations of $ \rG $. Let $ M = L(\rG) $ and $ A = L^{\infty}(X) \sub M $. Then a representation $ \pi $ of $ \rG $ on a Hilbert bundle $ X\ast \H $ induces a group representation $ \widetilde{\pi}: [\rG]\to \U\lr{\int_{X}^{\oplus}\H_{x} \dd{\mu(x)}} $ by 

\[
  (\widetilde{\pi}(\sigma) \xi)(x) = \pi(x\sigma)  \xi_{\d(x\si)}
\]
for $ \sigma\in [\rG] $, $ x\in X$. Utilizing Connes fusion over $ A $, we may form the $ A $-$ L(\rG) $ bimodule
\[
  \B(\pi) \coloneqq \left[ \int_{X}^{\oplus}\H_{x}\dd{\mu(x)}\right] \otimes_{A} L^{2}(\rG).
\]
We wish to incorporate the representation $ \pi $ to upgrade $ \B(\pi) $ to an $ M $-$ M $ bimodule.

\begin{prop}\label{reptobim}
    The Hilbert space $ \B(\pi) $ has an $ L(\rG) $-$ L(\rG) $ bimodule structure such that 
    \[
        au_{\sigma}\cdot (\xi \otimes \eta) \cdot x = \widetilde{\pi}(\sigma)(\xi)\otimes au_{\sigma}\eta x
    \]
    for $ a\in A $, $ \sigma\in [\rG] $, $ \xi\in \int_{X}^{\oplus}\H_{x} \dd{\mu(x)} $, $ x\in M $, $ \eta\in L^{2}(\rG) $.
\end{prop}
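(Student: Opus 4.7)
The plan is to define the action of $M$ on $\B(\pi)$ on the $*$-subalgebra $\mathcal{A} := \spn_{\CC}\{au_\sigma : a \in A,\ \sigma \in [\rG]\} \subset M$ via the prescribed formula, verify that it is a bounded $*$-representation compatible with the $A$-balancing of the Connes fusion, and then extend by normality to all of $M$. The heart of the construction is the observation that $\widetilde{\pi}$ is a unitary representation of $[\rG]$ on $\mathcal{H} := \int_X^{\oplus} \H_x\dd{\mu(x)}$ which \emph{normalizes} $A$ compatibly with the inner automorphisms of $M$; explicitly,
$$
\widetilde{\pi}(\sigma)\, a\, \widetilde{\pi}(\sigma)^* = \theta_\sigma(a), \qquad a\in A,\ \sigma\in[\rG],
$$
where $\theta_\sigma(a)(x) := a(\d(x\sigma))$ is precisely the automorphism of $A$ satisfying $u_\sigma a u_\sigma^* = \theta_\sigma(a)$ in $M$. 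Unitarity of $\widetilde{\pi}(\sigma)$ uses invariance of $\mu$ under the action of $[\rG]$, and the normalization identity reduces to a fiberwise computation.

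With this normalization in hand, the defining formula $L(au_\sigma)(\xi \otimes \eta) := \widetilde{\pi}(\sigma)\xi \otimes au_\sigma\eta$ respects the $A$-balancing $\xi b \otimes \eta = \xi \otimes b\eta$: for $b \in A$,
\begin{align*}
L(au_\sigma)(\xi b \otimes \eta) &= \widetilde{\pi}(\sigma)(\xi b) \otimes au_\sigma\eta = \widetilde{\pi}(\sigma)\xi \otimes \theta_\sigma(b)\, au_\sigma\eta \\
&= \widetilde{\pi}(\sigma)\xi \otimes au_\sigma b\, \eta = L(au_\sigma)(\xi \otimes b\eta).
\end{align*}
Multiplicativity of $L$ on $\mathcal{A}$ then follows from $\widetilde{\pi}(\sigma)\widetilde{\pi}(\tau) = \widetilde{\pi}(\sigma\tau)$ together with the relation $au_\sigma \cdot bu_\tau = a\theta_\sigma(b) u_{\sigma\tau}$ inside $M$, and $*$-preservation is verified by a short direct calculation with the Connes fusion inner product, again invoking the normalization. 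Boundedness on a simple tensor reduces to
$$
\norm{L(au_\sigma)(\xi \otimes \eta)}^{2} = \langle au_\sigma\eta,\ \theta_\sigma(\langle \xi, \xi\rangle_A)\, au_\sigma\eta\rangle \le \norm{au_\sigma}^2\,\norm{\xi\otimes\eta}^2,
$$
so that $L$ defines a contractive $*$-representation of $\mathcal{A}$ on $\B(\pi)$.

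Finally, the right $M$-action on $\B(\pi)$ inherited from $L^2(\rG)$, namely $(\xi \otimes \eta)\cdot x := \xi \otimes \eta x$, visibly commutes with $L$ on simple tensors, so $L(\mathcal{A})$ lies in the commutant of the right action. A standard Kaplansky density/SOT-continuity argument on bounded sets then extends $L$ uniquely to a normal $*$-representation of $M$, producing the desired $M$-$M$ bimodule structure. I expect the principal technical hurdle to be the careful bookkeeping of the $A$-balancing relation across the various verifications and a clean identification of the normalization automorphism $\theta_\sigma$; once these are in place, the remaining algebraic checks and the extension to $M$ proceed along standard lines.
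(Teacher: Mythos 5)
Your algebraic scaffolding is correct and is essentially what any direct proof must contain: the covariance relation $\widetilde{\pi}(\sigma)\,b\,\widetilde{\pi}(\sigma)^{*}=\theta_{\sigma}(b)$ for $b\in A$ (with unitarity of $\widetilde{\pi}(\sigma)$ from invariance of $\mu$), compatibility with the $A$-balancing, multiplicativity via $au_{\sigma}bu_{\tau}=a\theta_{\sigma}(b)u_{\sigma\tau}$, and commutation with the right action are all fine. (For comparison, the paper gives no details at all here and simply cites \cite[Lemma 5.3]{Ho16}.) Two caveats on the analytic side. The minor one: your norm estimate is carried out on a single simple tensor, which does not yet show that $L(au_{\sigma})$ is a well-defined bounded operator on the separation-completion $\B(\pi)$; this is easily repaired by writing $L(au_{\sigma})=(1\otimes a)V_{\sigma}$ with $V_{\sigma}(\xi\otimes\eta)=\widetilde{\pi}(\sigma)\xi\otimes u_{\sigma}\eta$, since the same covariance computation shows $V_{\sigma}$ preserves inner products of arbitrary pairs of simple tensors, hence is isometric on their span.

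The genuine gap is the last step. A ``standard Kaplansky density/SOT-continuity argument'' does not by itself extend $L$ from $\mathcal{A}=\spn\{au_{\sigma}\}$ to a normal representation of $M$: $\mathcal{A}$ is weakly dense but not norm closed, and a $*$-homomorphism of such a $*$-subalgebra need not be dominated by the C$^{*}$-norm of $M$ --- the bound $\norm{L(au_{\sigma})}\leq\norm{au_{\sigma}}$ on generators gives no control of $\norm{L(z)}$ for a general $z=\sum_{i}a_{i}u_{\sigma_{i}}$, so you have neither the uniform bound on bounded parts of $\mathcal{A}$ nor the strong-type continuity needed to push Kaplansky limits through $L$; and even a bounded extension is not automatically normal. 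This extension/normality is the actual content of the proposition, and it needs an argument, for instance the one the paper itself later develops: for a section $\xi$ with $\norm{\xi}_{\infty}<\infty$ and $m\in M$ one has $\inp{z\cdot(\xi\otimes\widehat{m})}{\xi\otimes\widehat{m}}=\tau\big(mm^{*}\Phi_{\xi}(z)\big)$, where $\Phi_{\xi}$ is the \emph{normal} completely positive map associated with the positive-definite function $\phi_{\xi}$ (the result of \cite{Lu18} quoted in the paper; cf.\ Lemma \ref{lem:phicringe}); since $\Phi_{\xi}(z^{*}z)\leq\norm{z}^{2}\Phi_{\xi}(1)=\norm{z}^{2}\inp{\xi}{\xi}_{A}$, this yields $\norm{L(z)\zeta}\leq\norm{z}\norm{\zeta}$ on such vectors (a matrix-valued version of the same positive-definiteness handles finite sums of simple tensors), and normality of the coefficient functionals then gives the normal extension to $M$. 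Alternatively, one can identify $\B(\pi)$ unitarily with the square-integrable sections $g\mapsto\zeta(g)\in\H_{\r(g)}$ over $(\rG,\mu_{\rG})$, under which the left action becomes the integrated form of $\pi\otimes\lambda_{\rG}$ and is manifestly a normal representation of $L(\rG)$. Hoff's Lemma 5.3, which the paper cites, is precisely a reference for this step; as written, your proposal asserts it rather than proves it.
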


\begin{proof}
    This can be proven exactly as in \cite[Lemma 5.3]{Ho16}.
\end{proof}

To fix notation, for a Hilbert bundle $ X\ast \H $ and $ \xi\in \int_{X}^{\oplus}\H_{x}\dd{\mu(x)} $, define $ \norm{\xi}_{\infty}:=\esssup_{x\in X} \norm{\xi_{x}} $. If $ \xi,\eta\in \int_{X}^{\oplus}\H_{x}\dd{\mu(x)} $ and $ \norm{\xi}_{\infty}, \norm{\eta}_{\infty}<+\infty $, define $ \inp{\xi}{\eta}_{A}\in A $ by $ \inp{\xi}{\eta}_{A}(x):= \inp{\xi_{x}}{\eta_{x}} $. Note that 
$$
\norm{\xi_x}^2 = \inp{\xi_x}{\xi_x} = \inp{\xi}{\xi}_A(x).
$$ 
Under this notation, the inner product in the aforementioned Connes fusion bimodule is given by
\[
    \inp{\xi \otimes \hat{m}}{\eta \otimes \hat{n}} = \inp{\inp{\xi}{\eta}_A\cdot \hat{m}}{\hat{n}}\quad\text{for }m,n\in M,\, \xi,\eta\in\int_X^{\oplus}\H_x \dd{\mu(x)}\text{ with } \norm{\xi}_\infty, \norm{\eta}_\infty <+\infty.
\]

A key tool in studying unitary representations of groups is the notion of positive-definite functions and multipliers. We have a corresponding definition for groupoids.

\begin{defn}
    Let $ \mathbb{F}\in \{\R,\C\} $. A Borel function $ \phi:\rG\to \mathbb{F} $ is \emph{positive-definite} if for a.e. $ x\in X $, for all $ n\in\N $ and $ g_{1},\ldots, g_{n}\in \rG^{x} $, the matrix $ [\phi(g_{i}^{-1}g_{j})]_{i,j} $ is positive semidefinite. The function is unital if $ \phi(x) = 1 $ for a.e. $ x\in X $.
\end{defn}

Positive-definite functions naturally arise from groupoid representations. Given a groupoid representation $ \pi:\rG\to \U(X\ast \H) $ and a fixed section $ \xi\in \int_{X}^{\oplus}\H_{x}\dd{\mu(x)} $ with $ \norm{\xi}_{\infty} < +\infty $, the function $ \phi_{\xi}:\rG\to \C $ by $ \phi_{\xi}(g) = \inp{\pi(g) \xi_{\d(g)}}{\xi_{\r(g)}} $ is positive-definite and is unital if $ \norm{\xi_{x}}=1$ for a.e. $ x\in X $ (see \cite{Jo05,AD13,Ki17}). Positive-definite functions also naturally give rise to completely-positive maps on $ L(\rG) $. 

\begin{prop}[\cite{Lu18}]
   Let $ \phi:\rG\to \C $ be a positive-definite function on $ \rG $. Then there is a unique completely-positive map $ \Phi:L(\rG)\to L(\rG) $ such that 
    \[
        \Phi(au_{\sigma}) = \phi(\cdot\, \sigma) au_{\sigma} \quad \text{for all } a\in A,\, \sigma\in[\rG].
    \]
    Moreover, $ \Phi $ is unital if $ \phi $ is unital.
\end{prop}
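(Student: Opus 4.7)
The plan is to realize $\Phi$ as a Stinespring compression inside the $M$-$M$ bimodule associated with a GNS realization of $\phi$. First, I would appeal to the standard GNS-type theorem for positive-definite functions on groupoids (as in \cite{Jo05, AD13, Ki17}) to obtain a representation $\pi_\phi$ of $\rG$ on a measurable Hilbert bundle $X*\H_\phi$ together with a cyclic section $\xi_\phi \in \int_X^{\oplus}\H_{\phi,x}\dd\mu(x)$ with $\norm{\xi_\phi}_\infty < +\infty$, satisfying
\[
\phi(g) = \inp{\pi_\phi(g)\xi_{\phi,\d(g)}}{\xi_{\phi,\r(g)}} \quad \text{for $\mu_\rG$-a.e. $g\in \rG$.}
\]
In the unital case, one may additionally arrange $\norm{\xi_{\phi,x}} = 1$ for $\mu$-a.e. $x\in X$.

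Next, I would apply Proposition \ref{reptobim} to obtain the $M$-$M$ bimodule $\B(\pi_\phi) = \lr{\int_X^{\oplus}\H_{\phi,x}\dd\mu(x)} \otimes_A L^2(\rG)$, and single out the vector $v := \xi_\phi \otimes \hat{1}$, where $\hat{1}\in L^2(\rG)$ is the canonical trace vector of $M$. Define $V:L^2(M)\to \B(\pi_\phi)$ by $V(\hat m) = v\cdot m = \xi_\phi \otimes \hat m$; this is bounded since, by the Connes-fusion inner product, $\norm{V\hat m}^2 = \tau(m^*\inp{\xi_\phi}{\xi_\phi}_A m) \leq \norm{\xi_\phi}_\infty^2 \norm{m}_2^2$, and $V$ is right-$M$-equivariant by construction. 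Letting $\pi_L$ denote the left $M$-action on $\B(\pi_\phi)$, I set
\[
\Phi(m) := V^* \pi_L(m) V \in B(L^2(M)).
\]
Since $V$ intertwines right $M$-actions and $\pi_L(m)$ commutes with the right $M$-action on $\B(\pi_\phi)$, the operator $\Phi(m)$ commutes with the right $M$-action on $L^2(M)$ and therefore lies in $M$. Stinespring's theorem then yields that $\Phi:M\to M$ is normal and completely positive.

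To verify the asserted formula, I compute for $m_1 = au_\sigma$ and $m_2 = bu_\tau$ with $a,b\in A$, $\sigma,\tau\in [\rG]$, using the left action and Connes-fusion inner product from Proposition \ref{reptobim}:
\[
\tau(\Phi(au_\sigma)\,bu_\tau) = \inp{au_\sigma \cdot v \cdot bu_\tau}{v} = \inp{\widetilde{\pi}(\sigma)\xi_\phi \otimes au_\sigma bu_\tau}{\xi_\phi \otimes \hat 1} = \tau\!\lr{\inp{\widetilde{\pi}(\sigma)\xi_\phi}{\xi_\phi}_A \cdot au_\sigma bu_\tau}.
\]
The fiberwise identity $\inp{\widetilde{\pi}(\sigma)\xi_\phi}{\xi_\phi}_A(x) = \inp{\pi_\phi(x\sigma)\xi_{\phi,\d(x\sigma)}}{\xi_{\phi,x}} = \phi(x\sigma)$ then identifies $\Phi(au_\sigma) = \phi(\cdot\,\sigma)\,au_\sigma$, as desired. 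Uniqueness follows from the $\sigma$-weak density of $\spn\{au_\sigma : a\in A, \sigma\in [\rG]\}$ in $M$ combined with normality of $\Phi$. Unitality when $\phi$ is unital is immediate since in that case $\inp{\xi_\phi}{\xi_\phi}_A = 1_X$, forcing $\tau(\Phi(1)m_2) = \tau(m_2)$ for all $m_2\in M$ and hence $\Phi(1) = 1$.

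The main obstacle I anticipate is the very first step: producing the groupoid GNS triple $(\pi_\phi, X*\H_\phi, \xi_\phi)$ with the required bundle measurability from a merely positive-definite function on $\rG$. This requires assembling a fiberwise Kolmogorov-type construction into an honest measurable Hilbert bundle endowed with a measurable cyclic section, which demands careful measurable-selection arguments and handling of null sets; this is exactly the content of the cited works. Everything downstream reduces to the bookkeeping inherent in the bimodule formalism of Proposition \ref{reptobim} together with the standard correspondence between pointed $M$-$M$ bimodules and normal completely positive maps on $M$.
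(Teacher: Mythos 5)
The paper itself offers no proof of this proposition---it is imported verbatim from \cite{Lu18}---so there is no in-paper argument to measure yours against; I can only assess the proposal on its own terms, and it is correct and follows the standard route. Once $\phi$ is realized as $\phi(g)=\inp{\pi_\phi(g)\xi_{\phi,\d(g)}}{\xi_{\phi,\r(g)}}$ for a groupoid representation $\pi_\phi$ with a bounded cyclic section $\xi_\phi$, the compression $V^{*}\pi_L(\cdot)V$ inside $\B(\pi_\phi)$ commutes with the right action, hence lands in $M$, is normal and completely positive, and your trace computation against the weakly dense span of the elements $bu_\tau$ correctly identifies $\Phi(au_\sigma)=\phi(\cdot\,\sigma)au_\sigma$; unitality is then immediate. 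Two small points deserve explicit mention. First, the GNS step you defer to the cited literature silently requires $\esssup_{x\in X}\phi(x)<\infty$ (equivalently $\norm{\xi_\phi}_\infty<\infty$); the paper's definition of a positive-definite function does not impose this, but it is forced by the conclusion (take $\sigma=\rG^{(0)}$, so $\Phi(1)=\phi\vert_X$ must lie in $A$), and the inequality $|\phi(g)|^2\le\phi(\r(g))\phi(\d(g))$ then bounds $\phi$ globally---worth one sentence, and automatic in the unital case. Second, your uniqueness argument ($\sigma$-weak density of $\spn\{au_\sigma\}$ plus normality) pins $\Phi$ down only within the class of \emph{normal} completely positive maps; a general CP map is merely norm-continuous, so ``unique'' in the statement should be read in that intended sense, or supplemented by an argument that any CP map satisfying the formula is automatically normal. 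Neither point affects the substance of your construction, and the measurable-selection issues in assembling the GNS bundle, which you flag as the main obstacle, are indeed exactly what the cited sources supply.
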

When the positive-definite function in question is the $ \phi_{\xi} $ coming from a groupoid representation and a section $ \xi $, we denote the corresponding completely-positive map $ \Phi_{\xi} $. With this notation, we have 
\[
    \Phi_{\xi}(au_{\sigma}) = \phi_{\xi}(au_{\sigma}) = \inp{\pi(\cdot \sigma) \xi_{\d(\cdot \sigma)}}{\xi_{\cdot}}au_{\sigma} = \inp{\widetilde{\pi}(\sigma) \xi}{\xi}_{A}au_{\sigma}.
\]
We record below some useful facts about how the completely positive map $\Phi_\xi$ interacts with the trace on $L(\rG)$. 
\begin{lem}\label{lem:phicringe}
    Suppose $ \xi_{1},\ldots, \xi_{s}\in \int_{X}^{\oplus}\H_{x}\dd{\mu(x)} $ with $ \norm{\xi_{i}}_\infty<+\infty $ for all $1\leq i \leq s$. Then the following hold.
    \begin{enumerate}
        \item[(a)] For any $ x,\psi_{1},\psi_{2}\in L(\rG) $, we have
            \[
                \sum_{i=1}^{s} \inp{x\cdot \xi_{i}\otimes \widehat{\psi_{1}}}{\xi_{i}\otimes \widehat{\psi_{2}}} = \sum_{i=1}^{s}\tau(\psi_{2} \psi_{1}^{\ast} \Phi_{\xi_{i}}(x)).
            \]
        \item[(b)] $ \norm{\sum_{i=1}^{s} \Phi_{\xi_{i}}(1)} = \norm{\sum_{i=1}^{s} \inp{\xi_{i}}{\xi_{i}}_{A}}_{\infty} $ 
        \item[(c)] $ \tau\circ\sum_{i=1}^{s}\Phi_{\xi_{i}} \leq \norm{\sum_{i=1}^{s} \inp{\xi_{i}}{\xi_{i}}_{A}}_{\infty}\cdot \tau $
        \item[(d)] The map $\sum_{i=1}^{s}\Phi_{\xi_i}$ is $L^2$-bounded, i.e. for all $x\in L(\rG)$, \[
            \norm{\sum_{i=1}^{s}\Phi_{\xi_i}(x)}_2 \leq \norm{\sum_{i=1}^{s} \inp{\xi_{i}}{\xi_{i}}_{A}}_{\infty} \cdot\norm{x}_{2}
        \]
    \end{enumerate}
\end{lem}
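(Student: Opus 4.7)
The plan is to prove the four parts in order, with (c) and (d) relying on the earlier items. For (a), I would first reduce to monomials $x = au_\sigma$ with $a \in A$ and $\sigma \in [\rG]$ by sesquilinearity in $x$ and normality. The bimodule action from Proposition \ref{reptobim} yields
\[
au_\sigma \cdot (\xi_i \otimes \widehat{\psi_1}) = \widetilde\pi(\sigma)(\xi_i) \otimes \widehat{au_\sigma \psi_1},
\]
so the Connes fusion inner product formula together with the defining identity $\Phi_{\xi_i}(au_\sigma) = \inp{\widetilde{\pi}(\sigma)\xi_i}{\xi_i}_A \, au_\sigma$ (from the proposition preceding the lemma) and cyclicity of $\tau$ produce the claimed trace expression.

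Part (b) is immediate: applying the defining formula to $x = 1$ gives $\Phi_{\xi_i}(1) = \inp{\xi_i}{\xi_i}_A \in A = L^\infty(X)$, and the inclusion $A \hookrightarrow L(\rG)$ is isometric. For (c), my strategy is to establish the intertwining identity
\[
\E_A \circ \Phi_{\xi_i} = \inp{\xi_i}{\xi_i}_A \cdot \E_A,
\]
where $\E_A : L(\rG) \to A$ is the canonical trace-preserving conditional expectation (restriction of functions on $\rG$ to the unit space). Testing on a monomial $au_\sigma$, one has $\E_A(\Phi_{\xi_i}(au_\sigma)) = \inp{\widetilde{\pi}(\sigma)\xi_i}{\xi_i}_A \cdot a \cdot 1_{\sigma \cap X}$; the key observation is that for $x \in \sigma \cap X$ one has $x\sigma = x$, hence $\pi(x\sigma) = \mathrm{id}$ and therefore $\inp{\widetilde{\pi}(\sigma)\xi_i}{\xi_i}_A(x) = \norm{\xi_i(x)}^2 = \inp{\xi_i}{\xi_i}_A(x)$ on the fixed-point set. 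Normality of both $\E_A$ and $\Phi_{\xi_i}$ extends the identity to all of $L(\rG)$. Summing over $i$ and using $\E_A(x) \geq 0$ for $x \geq 0$ then yields
\[
\tau\lr{\sum_{i=1}^s \Phi_{\xi_i}(x)} = \tau\lr{\lr{\sum_{i=1}^s \inp{\xi_i}{\xi_i}_A} \E_A(x)} \leq \norm{\sum_{i=1}^s \inp{\xi_i}{\xi_i}_A}_\infty \tau(x).
\]

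Finally, (d) follows from (b), (c), and the Kadison--Schwarz inequality for completely positive maps: setting $\Phi = \sum_{i=1}^s \Phi_{\xi_i}$ and $c = \norm{\Phi(1)} = \norm{\sum_{i=1}^s \inp{\xi_i}{\xi_i}_A}_\infty$, one has $\Phi(x)^* \Phi(x) \leq c\, \Phi(x^* x)$, so taking traces and applying (c) gives $\norm{\Phi(x)}_2^2 \leq c \cdot c\, \norm{x}_2^2$. I expect the main care to be required in (c), specifically in verifying the intertwining identity on the dense subalgebra $\CC\rG$ and justifying its normal extension to all of $L(\rG)$; the remaining parts reduce to routine bookkeeping given the bimodule formula of Proposition \ref{reptobim} and standard facts about CP maps.
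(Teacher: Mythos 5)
Your proposal is correct and takes essentially the same approach as the paper: the heart of (c) is the identical evaluation of the positive-definite function on the fixed-point set $\{x\in X: x\sigma=x\}$, extended by linearity and normality, and (d) is the same Kadison--Schwarz argument applied to $\sum_{i=1}^{s}\Phi_{\xi_i}$ together with (b) and (c). The only cosmetic difference is that you record the operator-valued identity $\E_{A}\circ\Phi_{\xi_i}=\inp{\xi_i}{\xi_i}_{A}\,\E_{A}(\cdot)$ and then use $\tau(fg)\leq\norm{f}_{\infty}\tau(g)$ in $A$, whereas the paper proves the scalar trace identity $\tau(\Phi_{\xi_i}(x))=\tau(x\inp{\xi_i}{\xi_i}_{A})$ and finishes with Cauchy--Schwarz on $x^{1/2}$.
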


\begin{proof}

Items (a) and (b) follow directly from the definitions of the Connes-Fusion tensor product and of $ \Phi_{\xi} $, so we only show item (c). Suppose $ a\in A $ and $ \sigma\in [\rG] $. Then we compute 
\begin{align*}
    \tau(\Phi_{\xi}(au_{\sigma})) &= \int_X \E_{A}(\inp{\pi(\cdot \sigma)\xi_{\d(\cdot \sigma)}}{\xi_{\cdot}}au_{\sigma}) \dd{\mu(x)}\\
    &= \int_X \inp{\pi(x \sigma)\xi_{\d(x \sigma)}}{\xi_{x}} a(x) 1_{\{z\in X: z \sigma  = z\}}(x)\dd{\mu(x)}\\
    &= \int_X \inp{\xi}{\xi}_{A}(x) a(x) 1_{\{z\in X: z \sigma  = z\}}(x)\dd{\mu(x)} = \tau(\inp{\xi}{\xi}_{A}au_{\sigma})
\end{align*}
By linearity and normality, it follows that $ \tau(\Phi_{\xi}(x)) = \tau(x\inp{\xi}{\xi}_{A}) $ for all $ x\in L(\rG) $. Now suppose that $ x\in L(\rG)_{+} $. Then we apply the above computation and Cauchy-Schwarz to see 
\begin{align*}
    \tau\lr{\sum_{i=1}^{s}\Phi_{\xi_{i}}(x)} &= \tau(x^{1/2}\sum_{i=1}^{s}\inp{\xi_{i}}{\xi_{i}}_{A} x^{1/2}) = \inp{x^{1/2}}{\sum_{i=1}^{s}\inp{\xi_{i}}{\xi_{i}}x^{1/2}} \\
    &\leq \norm{x^{1/2}}_{2} \norm{\sum_{i=1}^{s}\inp{\xi_{i}}{\xi_{i}}}_{\infty} \norm{x^{1/2}}_2 = \norm{\sum_{i=1}^{s}\inp{\xi_{i}}{\xi_{i}}}_{\infty} \cdot\tau(x). \\
\end{align*}
Part (d) then follows from part (c) by applying the Kadison-Schwarz inequality (see \cite[Proposition 3.3]{Pa02}) to the map $\sum_{i=1}^{s}\Phi_{\xi_i}$. \end{proof}

\begin{prop}\label{prop:bimmixing}
    If $ \pi:\rG\to \U(X\ast \H) $ is a mixing $ \rG $-representation, then the bimodule $ _M \B(\pi)_M $ is mixing relative to $ A $.
\end{prop}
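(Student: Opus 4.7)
The plan is to translate the bimodule mixing condition relative to $A$ into an $L^{2}$-vanishing statement for the multiplier maps $\Phi_{\xi,\eta}:L(\rG)\to L(\rG)$ given on generators by $\Phi_{\xi,\eta}(au_{\sigma}) := \inp{\widetilde{\pi}(\sigma)\xi}{\eta}_{A}\,au_{\sigma}$, and then to exploit the mixing of $\pi$ to verify this vanishing. Density of elementary tensors and boundedness of the right $M$-action quickly reduce the mixing condition to testing on bimodule vectors of the form $\xi\otimes\hat{1}$, with $\xi\in\int_{X}^{\oplus}\H_{x}\dd{\mu(x)}$ satisfying $\norm{\xi}_{\infty}<+\infty$.

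For such vectors, a direct computation from Proposition \ref{reptobim} (in the spirit of Lemma \ref{lem:phicringe}(a), extended sesquilinearly) yields
$$
\inp{x\cdot(\xi\otimes\hat{1})\cdot y}{\eta\otimes\hat{1}}=\tau\lr{\Phi_{\xi,\eta}(x)\,y}\qquad (x,y\in M),
$$
so $\sup_{y\in (M)_{1}}\lrvert{\inp{x_{n}(\xi\otimes\hat{1})y}{\eta\otimes\hat{1}}}=\norm{\Phi_{\xi,\eta}(x_{n})}_{1}\le\norm{\Phi_{\xi,\eta}(x_{n})}_{2}$. The dual (left-sided) bimodule mixing condition reduces to the same statement after swapping $\xi\leftrightarrow\eta$ and replacing $x_{n}$ by $x_{n}^{*}$. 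It thus suffices to show that $\norm{\Phi_{\xi,\eta}(x_{n})}_{2}\to 0$ for any $(x_{n})\subset (M)_{1}$ satisfying $\norm{\E_{A}(yx_{n}z)}_{2}\to 0$ for every $y,z\in M$.

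The heart of the argument uses mixing of $\pi$ to $\epsilon$-approximate $\Phi_{\xi,\eta}(x_{n})$ by a finite sum of the form $\sum_{i=1}^{K}u_{\sigma_{i}}f_{i}\E_{A}(u_{\sigma_{i}}^{*}x_{n})$. Given $\epsilon>0$, Definition \ref{mixingreps}(ii) applied to normalizations of $\xi,\eta$ produces a measurable $E\subset X$ with $\mu(X\setminus E)<\epsilon$ on which, for a.e. $x\in E$, the set $\{g\in\rG_{x}^{E}:\lrvert{\phi_{\xi,\eta}(g)}>\epsilon\}$ is finite, where $\phi_{\xi,\eta}(g):=\inp{\pi(g)\xi_{\d(g)}}{\eta_{\r(g)}}$. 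After shrinking $E$ by measure at most $\epsilon$ so that these fiber cardinalities are uniformly bounded by some $K\in\N$, the Lusin--Novikov selection theorem (as in the proof of Lemma \ref{gpbun}) decomposes the large-value subset $\{g\in\rG_{E}^{E}:\lrvert{\phi_{\xi,\eta}(g)}>\epsilon\}$ into Borel bisections $\sigma_{1},\ldots,\sigma_{K}\in[[\rG]]$. The standard Fourier-like decomposition of $L(\rG)$ over $A$ (via $x\mapsto u_{\sigma_{i}}\E_{A}(u_{\sigma_{i}}^{*}x)$ extracting the piece of $x$ supported on $\sigma_{i}$) then expresses the cutoff of $\Phi_{\xi,\eta}(x_{n})$ to $\bigcup_{i}\sigma_{i}$ precisely in the required form, with $f_{i}\in A$ satisfying $\norm{f_{i}}_{\infty}\leq\norm{\xi}_{\infty}\norm{\eta}_{\infty}$; each such summand tends to $0$ in $\norm{\cdot}_{2}$ by hypothesis. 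The complementary pieces of $\Phi_{\xi,\eta}(x_{n})$ (the part outside $\rG_{E}^{E}$ and the small-value part inside) are bounded in $\norm{\cdot}_{2}$ by $O(\epsilon)\cdot\norm{\xi}_{\infty}\norm{\eta}_{\infty}$ using the $L^{2}$-boundedness from a variant of Lemma \ref{lem:phicringe}(d). Letting $\epsilon\downarrow 0$ finishes the proof.

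The main obstacle lies in the third paragraph: passing from the fiberwise finiteness given by mixing to a globally finite bisection decomposition (requiring the uniform fiber-cardinality bound plus Lusin--Novikov), and then rewriting the cutoff of $\Phi_{\xi,\eta}(x_{n})$ to $\bigcup_{i}\sigma_{i}$ in the exact $\E_{A}$-twisted form needed to invoke the hypothesis. Once that decomposition is in place, the remaining estimates are routine.
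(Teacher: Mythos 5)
Your overall strategy is sound, and it is a genuinely different route from the paper's. Both proofs start with the same translation: the relative-mixing condition is reduced, via elementary tensors and the identity $\inp{x\cdot(\xi\otimes\widehat{1})\cdot y}{\eta\otimes\widehat{1}}=\tau\lr{\Phi_{\xi,\eta}(x)y}$, to showing that the multiplier with kernel $\phi_{\xi,\eta}(g)=\inp{\pi(g)\xi_{\d(g)}}{\eta_{\r(g)}}$ sends every sequence $(x_n)\subset (M)_1$ with $\norm{\E_A(yx_nz)}_2\to0$ to a sequence vanishing in $\norm{\cdot}_2$; this matches the paper's computation with vectors $\xi\otimes\widehat{m}$ and its closure/polarization argument via \cite[F.1.3]{BeVa08}. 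The difference is how that $L^2$-vanishing is obtained. The paper extends $\Phi_\xi$ to $\widehat{\Phi}\in\inp{M}{A}\cap A'$, uses \cite[Lemma 5.5]{AD13} to see that its compressions by $q_k=1_{E_k}$ are compact over $A$ in the basic construction, and then quotes Popa's \cite[Prop.\ 1.3.3.5]{Po06}; you instead reprove the needed special case of that compactness mechanism by hand, cutting the kernel at level $\eps$ and extracting the large part of $\Phi_{\xi,\eta}(x_n)$ through the Fourier-type pieces $u_{\sigma}\E_A(u_{\sigma}^*x_n)$, the remainder being $O(\eps)$ since $|\phi_{\xi,\eta}|\le\norm{\xi}_\infty\norm{\eta}_\infty$. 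Your version is more elementary and self-contained; the paper's is shorter because the measurable-selection bookkeeping is absorbed into the cited results.

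One step does need repair. Mixing, as stated in Definition \ref{mixingreps}(ii), only bounds the $\d$-fibers $\{g\in\rG^E_x: |\phi|>\eps\}$ of the large-value set; it gives no control of its $\r$-fibers, so Lusin--Novikov alone does not decompose that set into finitely many \emph{bisections}. It only yields finitely many $\d$-injective pieces, on which $u_\sigma$ need not be a partial isometry (or even bounded) and the extraction $u_{\sigma}\E_A(u_{\sigma}^*x_n)$ of the piece supported on $\sigma$ breaks down. Two standard fixes: (a) apply the mixing condition also to the swapped pair, using $|\phi_{\xi,\eta}(g)|=|\phi_{\eta,\xi}(g^{-1})|$, intersect the two exceptional sets and shrink so that both $\d$- and $\r$-fibers are uniformly bounded, then decompose in two Lusin--Novikov steps (first $\d$-injective pieces, then $\r$-injective pieces of each) to get finitely many bisections; or (b) keep the one-sided bound, note that after the uniform bound $K$ on $\d$-fibers the large-value set has $\mu_\rG$-measure at most $K$, decompose it into \emph{countably} many bisections $\sigma_m$, and control the tail of $\sum_m\norm{\E_A(u_{\sigma_m}^*x_n)}_2^2$ uniformly in $n$ by $\sum_m\mu_\rG(\sigma_m)<\infty$ together with the pointwise bound $|\widehat{x_n}|\le\norm{x_n}\le1$ on symbols. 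Two minor points: the left-sided condition is handled not by replacing $x_n$ with $x_n^*$ but by the trace-transpose identity $\tau(\Phi_{\xi,\eta}(y)x_n)=\tau\lr{y\,\Phi_{\check{\phi}}(x_n)}$ with $\check{\phi}(g)=\phi_{\xi,\eta}(g^{-1})=\overline{\phi_{\eta,\xi}(g)}$, a kernel of the same type, so your argument applies verbatim; and the estimate for the part of $\Phi_{\xi,\eta}(x_n)$ outside $\rG\vert_E$ is cleanest via the symbol picture $\widehat{\Phi_{\xi,\eta}(x)}=\phi_{\xi,\eta}\,\widehat{x}$ together with $\norm{(1-1_E)x_n}_2,\norm{x_n(1-1_E)}_2\le\sqrt{\eps}$, rather than via Lemma \ref{lem:phicringe}(d), which is stated only for the positive-definite case $\xi=\eta$.
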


\begin{proof}
    Fix $ \xi \in S_{1}(X\ast\H) $ and $ \eps>0 $. The map $ \phi=\phi_\xi:\rG\to \C $ given by $ \phi(g) = \inp{\pi(g) \xi_{\d(g)}}{\xi_{\r(g)}} $ is then a unital positive-definite function on $ \rG $, whence there is a unique unital, completely-positive, $ \E_{A} $-preserving, $ A $-$ A $ bimodular map $ \Phi=\Phi_\xi:M\to M $ such that 
    \[
        \Phi(u_{\sigma}) = \phi(\cdot\, \sigma) u_{\sigma} \quad \text{for all } \sigma\in[\rG].
    \]
    By \cite[
Proposition 5.3]{AD13}, such a map extends to a unique contraction $ \widehat{\Phi}: L^{2}(M)\to L^2(M) $; moreover, the expectation-preservation and $ A $-$ A $-bimodularity imply that $ \widehat{\Phi}\in \inp{M}{A}\cap A^{\prime} $ (where $\inp{M}{A}$ denotes the Jones basic construction of $A\sub M$).

    Appealing to the mixingness of $ \pi $, for each $ k\in \N $ there is some measurable $ E_{k}\sub X $ with $ \mu(X\setminus E_{k})  <  2^{-k} $ such that  
    \[
      \mu_{\rG}\lr{\{g\in \rG\vert_{E_{k}}: | \phi(g)| >\eps\}} < +\infty.
    \]
    The projections $q_i:=1_{E_i}$ in $A$ satisfy $ \tau(1-q_{k}) \xrightarrow{k\to\infty} 0$.
    
    Consider the map $ \Phi_{k}: q_{k}M q_{k}\to q_{k}Mq_{k} $ given by $ \Phi_{k}(\cdot) = \Phi(q_{k}\cdot q_{k}) $. By \cite[Lemma 5.5]{AD13}, we have that $ \widehat{\Phi}_{k}\in \K(\inp{q_{k}M q_{k}}{q_{k}A q_{k}}) $ (i.e. the compact ideal space of the semifinite algebra $\inp{q_k M q_k}{q_k A q_k}$ as in \cite{Po06}).

    Suppose that $ (u_{n})_{n=1}^{\infty}$ is a sequence in $ (M)_{1} $ such that $ \norm{\E_{A}(m_{1}u_{n}m_{2})}_{2}\to 0 $ for all $ m_{1},m_{2}\in M $.
    
    Fix $ k\in \N $ such that $ \norm{1-q_{k}}_{2}^2 < \eps/6$. Then, for all $ n\in \N $, we have
    \begin{align*}
      \norm{u_{n}}_{2}^2 &= \norm{(1-q_{k})u_{n}(1-q_{k})}_{2}^2 + \norm{(1-q_{k})u_{n}^2 q_{k}}_{2}^2 + \norm{q_{k}u_{n}(1-q_{k})}_{2}^2 + \norm{q_{k}u_{n}q_{k}}_{2}^2\\
      &\leq 3\norm{1-q_{k}}_{2}^2 + \norm{q_{k}u_{n}q_{k}}_{2}^2  < \frac{\eps}{2} + \norm{q_{k}u_{n}q_{k}}_{2}^2.
    \end{align*}
    Note that, as the choice of $k$ is independent of $n\in \N$, the above estimate implies that $u_n$ is uniformly close to $q_k u_n q_k$.

    As the sequence $ (q_{k}u_{n}q_{k})_{n=1}^{\infty} $ in $ (q_{k}Mq_{k})_{1} $ satisfies the hypotheses of \cite[Prop. 1.3.3.5]{Po06}, the compactness of $ \widehat{\Phi}_{k} $ gives that 
    \[
      \norm{\Phi_{k}(q_{k}u_{n}q_{k})}_{2} \xrightarrow{n\to\infty} 0.
    \]
    Choose $ N\in \N $ such that $ \norm{\Phi_{k}(q_{k}u_{n}q_{k})}_{2} < \eps/2 $ for all $ n\geq N $. Then by Lemma \ref{lem:phicringe}(d),
    \begin{align*}
        \norm{\Phi(u_n)}_2 &\leq \norm{\Phi(u_n - q_k u_n q_k)}_2 + \norm{\Phi_k(q_k u_n q_k)}_2 \\
        &\leq \norm{u_n-q_k u_n q_k}_2 + \frac{\eps}{2} < \eps
    \end{align*}
    for all $ n\geq N $, so $ \norm{\Phi(u_{n})}_{2}\to 0 $.

    Now fix a countable subset $ \Gamma \sub [\rG]$ which generates $ \rG $ and write $ u_{n} = \sum_{\sigma\in \Gamma} a_{\sigma}^{n} u_{\sigma} $. For any $ m, y\in (M)_{1} $, observe that
    \begin{align*}
      \inp{u_{n}\cdot( \xi \otimes \widehat{m})\cdot y}{\xi \otimes \widehat{m}}&= \sum_{\sigma\in \Gamma} \inp{a_{\sigma}^{n}u_{\sigma}\cdot( \xi \otimes \widehat{m}) \cdot y}{\xi \otimes \widehat{m}} \\
      &= \sum_{\sigma\in \Gamma} \int_{X} \E_{A} (a_{\sigma}^{n}u_{\sigma}mym^{*}) \inp{\pi(x \sigma) \xi_{d(x \sigma)}}{\xi_{x}} \dd{\mu(x)}\\
      &= \int_{X}\E_{A} \lr{ \sum_{\sigma\in \Gamma}\inp{\pi(x \sigma) \xi_{d(x \sigma)}}{\xi_{x}}a_{\sigma}^{n}u_{\sigma}mym^{*}}  \dd{\mu(x)}\\
      &= \int_{X}\E_{A}(  \Phi(u_{n})mym^{*})  \dd{\mu(x)} = \inp{\Phi(u_{n})}{my^{*}m^{*}}
    \end{align*}
    whence
    \[
      \sup_{y\in (M)_{1}} |\inp{u_{n}\cdot( \xi \otimes \widehat{m})\cdot y}{\xi \otimes \widehat{m}}| \leq \norm{\Phi(u_{n})}_{2} \xrightarrow{n\to\infty} 0.
    \]
    We may then upgrade this result to any vectors $ v,w\in \B(\pi) $ by applying the argument in \cite[F.1.3]{BeVa08} to the set 
    \begin{align*}
        V = \{v\in \B(\pi): \lim_{n\to\infty}\sup_{y\in (M)_{1}} |\inp{u_{n}vy}{v}|=0 \text{ for any } (u_{n})_{n=1}^{\infty}&\\
      \text{ with } \norm{\E_{A}(m_{1}u_{n}m_{2})}_{2}\to 0 &\text{ for all }m_{1},m_{2}\in M\}.
    \end{align*}
   
    One shows that this set is closed under scalar multiplication, the action of $ M \odot M^{\rm{op}} $, addition, and is norm closed, whence it follows that $ V = \B(\pi) $. Lastly, one uses polarization to upgrade to pairs of vectors. \end{proof}

\begin{prop}\label{prop:bimwkcont}
For all $ \rG $-representations $\pi$ and $\rho$ such that $ \pi \prec \rho $, we have that
\[
    _M\B(\pi)_M \prec {}_M\B(\rho)_M.
\]
\end{prop}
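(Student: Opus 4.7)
The plan is to reduce weak containment of bimodules to the hypothesis $\pi \prec \rho$ at the representation level by expressing bimodule matrix coefficients as integrals of positive-definite functions $\phi_\xi(g) = \langle \pi(g)\xi_{\d(g)}, \xi_{\r(g)}\rangle$ over bisections, which have $\mu_\rG$-measure at most $1$. I first adapt the closure argument from the end of the proof of Proposition \ref{prop:bimmixing}: let $V \subset \B(\pi)$ be the set of vectors $v$ for which the weak-containment approximation (with approximants in $\B(\rho)$) holds. Standard checks show $V$ is closed under scalar multiplication, addition, norm limits, and the $M \odot M^{\rm op}$-action, so it suffices to show that $V$ contains a total subset. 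Since $\xi \otimes \hat m = (\xi \otimes \hat 1)\cdot m$ and essentially bounded sections $\xi$ are dense in $\int_X^\oplus \H_x\, d\mu$, the right $M$-orbit of the vectors $\xi \otimes \hat 1$ with $\norm{\xi}_\infty < \infty$ is dense in $\B(\pi)$, so it is enough to verify $v := \xi \otimes \hat 1 \in V$ for such $\xi$.

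By bilinearity I may assume $F_1 = \{au_\sigma\}$ and $F_2 = \{bu_\tau\}$ with $a, b \in A$ and $\sigma, \tau \in [\rG]$. Proposition \ref{reptobim} and the Connes-fusion inner product formula give
\begin{align*}
\langle au_\sigma \cdot v \cdot bu_\tau,\, v\rangle = \tau\bigl(\langle \widetilde\pi(\sigma)\xi, \xi\rangle_A \cdot au_\sigma bu_\tau\bigr),
\end{align*}
and since $\langle \widetilde\pi(\sigma)\xi, \xi\rangle_A(x) = \phi_\xi(x\sigma)$, this trace expands as an integral of $\phi_\xi$ over a measurable subset of the bisection $\sigma \subset \rG$ against a function of $L^\infty$-norm at most $\norm{a}\norm{b}$. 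Since $\mu_\rG(\sigma) \leq 1$, Definition \ref{weakcont} applied to the (normalized) section $\xi$ and the finite-measure set $\sigma$ produces $\eta^1, \ldots, \eta^n \in S(X * \K)$ such that $\sum_i \phi_{\eta^i}$ approximates $\phi_\xi$ on $\sigma$ in $\mu_\rG$-measure. The same identity (with $\xi$ replaced by $\eta^i$) computes $\langle au_\sigma \cdot w_i \cdot bu_\tau, w_i\rangle$ for $w_i := \eta^i \otimes \hat 1 \in \B(\rho)$, so the desired estimate reduces to controlling the integral of $\phi_\xi - \sum_i \phi_{\eta^i}$ against a bounded function on $\sigma$.

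The main obstacle is converting the in-measure approximation of Definition \ref{weakcont} into an $L^1$-approximation over the bisection. This requires a uniform $L^\infty$-bound on $\sum_i |\phi_{\eta^i}|$, which I would achieve by truncating each $\eta^i$ in fibre norm to $\norm{\eta^i_x}_{\K_x} \leq R$ for a large $R$ and absorbing the resulting discrepancy into the error; since the target satisfies $|\phi_\xi| \leq \norm{\xi}_\infty^2 < \infty$ uniformly, a standard splitting by the measure-approximation set then upgrades the measure approximation on $\sigma$ to an $L^1$-approximation and yields the desired bound, up to a multiplicative constant $\norm{a}\norm{b}$, on the difference of the two bimodule matrix coefficients.
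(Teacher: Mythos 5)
Your overall route is the same as the paper's (express the bimodule matrix coefficients as integrals of $\phi_\xi$ over bisections, apply Definition \ref{weakcont} to a finite-measure set, and finish with the closure argument of \cite[F.1.3]{BeVa08}), but the step where you ``truncate each $\eta^i$ in fibre norm to $\norm{\eta^i_x}\le R$ and absorb the discrepancy'' does not close. The order of quantifiers defeats it: you must fix the tolerance $\eps$ before Definition \ref{weakcont} hands you the $n=n(\eps)$ sections $\eta^1,\dots,\eta^n$, and only after seeing them can you choose $R$ large enough that truncation changes little. On the exceptional set $Q_\eps$, where you have no pointwise control, the truncated sum $\sum_i|\phi_{\widetilde\eta^i}|$ is bounded only by $nR^2$, so the bad-set contribution is of order $\eps\, n(\eps)R^2$, which you cannot make small. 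What is needed is a bound on $\sum_i\norm{\eta^i_x}^2$ that is \emph{uniform} in the approximation data; the paper manufactures it by insisting that the unit bisection $X=\rG^{(0)}$ lie in the test set $Q$, so that the weak-containment estimate \emph{at the units} forces $\sum_i\norm{\eta^i_x}^2\le 1+\eps$ off a set of measure at most $\eps$, and then cutting the sections by the indicator of that set gives $\sum_i\inp{\widetilde\eta^i}{\widetilde\eta^i}_A\le 2$. Your truncation has no analogue of this normalization, so the ``standard splitting'' you invoke fails exactly where the paper's key idea enters.

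Two further points. The definition of $_M\B(\pi)_M \prec {}_M\B(\rho)_M$ quantifies over arbitrary finite subsets $F_1,F_2\subset M$, but you only verify the estimate against elements $au_\sigma$, $bu_\tau$ of the dense subalgebra $D$; the closure properties of your set $V$ enlarge the family of \emph{vectors}, not the family of test \emph{elements}, so they do not supply this extension. The paper's Lemma \ref{lem:omegainspan} handles it via Kaplansky density together with the $L^2$-bound $\norm{\sum_i\Phi_{\eta_i}(x)}_2\le\norm{\sum_i\inp{\eta_i}{\eta_i}_A}_\infty\norm{x}_2$ of Lemma \ref{lem:phicringe}(d), and this again requires precisely the uniform constant $C_\xi$ that your truncation fails to provide, so the two gaps are really one and the same missing ingredient. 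Finally, the reduction ``by bilinearity'' to singletons $F_1=\{au_\sigma\}$, $F_2=\{bu_\tau\}$ is not legitimate as stated, since the approximating vectors must work simultaneously for every pair in $F_1\times F_2$; this one is easily repaired by applying Definition \ref{weakcont} to the union of the finitely many bisections involved (still of finite $\mu_\rG$-measure), which is exactly what the paper does.
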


The proof of this proposition is largely contained in the following lemma. To set notation, for a bimodule $ {}_M \H_M $ and $ \zeta\in \H $, let the functional $ \omega_{\zeta}\in (M \otimes_{\mathrm{max}} M^{\mathrm{op}})^{*} $ be given by 
\[
    \omega_{\zeta}(x \otimes y^{\mathrm{op}}) := \inp{x\cdot \zeta\cdot y}{\zeta}\quad\text{for }x,y\in M.
\]



\begin{lem}\label{lem:omegainspan}
    Let $ \pi:\rG\to \U(X\ast\H) $ and $ \rho:\rG\to\U(X\ast\K) $ be unitary representations of a measured groupoid $ \rG $ and set 
    \[
        D:=\Span\{au_{\sigma}: a\in A,\,\sigma\in [\rG]\}.
    \]
    Suppose for any $ \xi\in \int_{X}^{\oplus}\H_{x}\dd{\mu(x)} $, $ \psi\in L(\rG) $ we have
\begin{equation*}
 \left.\begin{aligned}
            &\text{there is a constant }  C_{\xi} > 0  \text{ such that for any $ \delta>0 $ and finite subsets }F\sub D,\,E\sub L(\rG),\\
            &\text{there exist sections } \eta_{1},\ldots, \eta_{l}\in \int_{X}^{\oplus}\K_{x}\dd{\mu(x)}\text{ with } \sum_{i=1}^{l}\inp{\eta_{i}}{\eta_{i}}_{A}\leq C_{\xi}\\
            &\text{and } \left| \omega_{\xi \otimes \widehat{\psi}}(x^{\prime} \otimes y) - \sum_{i=1}^{l} \omega_{\eta_{i} \otimes \widehat{\psi}}(x^{\prime}\otimes y)\right| < \delta\quad \text{for all }x^{\prime}\in F,\,y\in E.
        \end{aligned}
 \right\}
 \quad (\ast\ast) 
\end{equation*}
Then the functional $ \omega_{\xi \otimes \widehat{\psi}} $, as an element of $ (L(\rG) \otimes_{\mathrm{max}} L(\rG)^{\mathrm{op}})^{*} $, is in the set
\[
    \cls{\mathrm{Span}}^{\mathrm{wk}^*}\left\{\omega_{\zeta} : \zeta\in \B(\rho)\right\}.
\]

\end{lem}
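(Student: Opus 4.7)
The plan is to verify directly that $\omega_{\xi\otimes\widehat{\psi}}$ belongs to the claimed weak-$^*$ closure by checking the defining approximation property: for every finite set $T_1,\ldots,T_m\in L(\rG)\otimes_{\max}L(\rG)^{\mathrm{op}}$ and every $\varepsilon>0$, produce sections $\eta_1,\ldots,\eta_l$ of $X\ast\K$ such that $\big|\omega_{\xi\otimes\widehat{\psi}}(T_k)-\sum_i\omega_{\eta_i\otimes\widehat{\psi}}(T_k)\big|<\varepsilon$ for all $k$. A preliminary reduction via $\xi_n:=1_{\{\|\xi_{\cdot}\|\leq n\}}\xi$ allows us to assume $\|\inp{\xi}{\xi}_{A}\|_{\infty}<+\infty$, since $\omega_{\xi_n\otimes\widehat{\psi}}\to\omega_{\xi\otimes\widehat{\psi}}$ in the $(L(\rG)\otimes_{\max}L(\rG)^{\mathrm{op}})^*$-norm and the weak-$^*$ closure is norm-closed.

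The critical ingredient is a uniform norm estimate on the approximating functionals: realizing $\sum_i\omega_{\eta_i\otimes\widehat{\psi}}$ as the positive vector functional of $(\eta_i\otimes\widehat{\psi})_i\in\bigoplus_i\B(\rho)$ and invoking the bound $\sum_i\inp{\eta_i}{\eta_i}_A\leq C_\xi$ from $(\ast\ast)$ yields
\[
\big\|{\textstyle\sum_i}\omega_{\eta_i\otimes\widehat{\psi}}\big\|_{(L(\rG)\otimes_{\max}L(\rG)^{\mathrm{op}})^*}=\sum_i\|\eta_i\otimes\widehat{\psi}\|^2\leq C_\xi\|\psi\|_2^2,
\]
independently of $l$ and of the choice of $\eta_i$'s. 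Combined with the analogous bound for $\omega_{\xi\otimes\widehat{\psi}}$ and with Lemma~\ref{lem:phicringe}(d), the identity $\omega_{\xi\otimes\widehat{\psi}}(x\otimes y^{\mathrm{op}})=\tau(\psi y\psi^*\Phi_{\xi}(x))$ (and its counterpart with $\Phi_\xi$ replaced by $\sum_i\Phi_{\eta_i}$) shows that, for fixed $y\in L(\rG)$, both $x\mapsto\omega_{\xi\otimes\widehat{\psi}}(x\otimes y^{\mathrm{op}})$ and $x\mapsto\sum_i\omega_{\eta_i\otimes\widehat{\psi}}(x\otimes y^{\mathrm{op}})$ are Lipschitz in $\|\cdot\|_2$ with a Lipschitz constant controlled by $\max\{C_\xi,\|\inp{\xi}{\xi}_A\|_\infty\}\|\psi\|^2\|y\|$, again uniformly in $l$ and in the $\eta_i$'s.

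With these uniform controls in hand, the proof proceeds in three successive approximation steps. First, using the norm-density of the algebraic tensor product $L(\rG)\odot L(\rG)^{\mathrm{op}}$ in $L(\rG)\otimes_{\max}L(\rG)^{\mathrm{op}}$, I would approximate each $T_k$ within a suitably small max-norm tolerance by a finite sum $T_k'=\sum_j x_{k,j}\otimes y_{k,j}^{\mathrm{op}}$; the uniform functional norms ensure this contributes at most $\varepsilon/3$ to the error in evaluating either $\omega_{\xi\otimes\widehat{\psi}}$ or any admissible $\sum_i\omega_{\eta_i\otimes\widehat{\psi}}$ on $T_k$ versus $T_k'$. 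Second, using Kaplansky's density theorem I would replace each $x_{k,j}$ by some $x_{k,j}'\in D$ close enough in $\|\cdot\|_2$ that the uniform Lipschitz bound contributes no more than $\varepsilon/3$. Finally, applying $(\ast\ast)$ with $F=\{x_{k,j}'\}$, $E=\{y_{k,j}\}$, and $\delta$ chosen to absorb the remaining $\varepsilon/3$ across all finitely many pairs yields the required sections $\eta_1,\ldots,\eta_l$; a triangle inequality closes the argument.

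The main obstacle in this plan is the orchestration of the bounds across the cascade of approximations: specifically, verifying that the single constant $C_\xi$ furnished by $(\ast\ast)$ controls not only the value $\sum_i\omega_{\eta_i\otimes\widehat{\psi}}(x\otimes y^{\mathrm{op}})$ at a single point but also the operator norm of $\sum_i\omega_{\eta_i\otimes\widehat{\psi}}$ and its $\|\cdot\|_2$-Lipschitz constant in $x$, so that the first two approximation errors can be pinned down in advance, before knowing which $\eta_i$'s $(\ast\ast)$ will eventually produce.
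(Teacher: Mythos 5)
Your proposal is correct and takes essentially the same route as the paper's proof: the uniform bound $\sum_{i}\inp{\eta_i}{\eta_i}_{A}\leq C_\xi$ together with Lemma \ref{lem:phicringe} gives norm and $\norm{\cdot}_2$-Lipschitz control of the approximating functionals uniformly in the $\eta_i$, Kaplansky density replaces elements of $M$ by elements of $D$, $(\ast\ast)$ is applied to the resulting finite sets, and uniform boundedness upgrades pointwise approximation from elementary tensors to all of $L(\rG)\otimes_{\mathrm{max}}L(\rG)^{\mathrm{op}}$ --- exactly the paper's steps, merely reordered (with a harmless extra truncation of $\xi$). The only slip is notational: with the convention $\omega_\zeta(x\otimes y^{\mathrm{op}})=\inp{x\zeta y}{\zeta}$ the identity reads $\omega_{\xi\otimes\widehat{\psi}}(x\otimes y^{\mathrm{op}})=\tau(\psi y^{*}\psi^{*}\Phi_{\xi}(x))$ rather than $\tau(\psi y\psi^{*}\Phi_{\xi}(x))$, which does not affect any estimate.
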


\begin{proof}
    Without loss of generality, we assume that $ \norm{\xi}_{\infty}\leq 1 $ and $ \norm{\psi}\leq 1 $. Fix $ \eps>0 $, finite subsets $ F,E \sub L(\rG)$, and write $ F=\{x_{1},\ldots, x_{k}\} $. 
    As $ D $ is a $\mathrm{WOT} $-dense $ * $-subalgebra of $ L(\rG) $, we may apply the Kaplansky Density Theorem to find $ F^{\prime}=\{x_{1}^{\prime},\ldots, x_{k}^{\prime}\}\sub D $ such that $ \norm{x_{j}^{\prime}}\leq \norm{x_{i}} $ and 
    \[
        \norm{x_{j}^{\prime}-x_{j}}_{2} <\frac{\eps}{4 (\max\limits_{x\in F} \norm{x}^{2}+1)(\max\limits_{y\in E}\norm{y}+\norm{y}^{2}+1)(C_{\xi}+1)}=:\delta.
    \]
    Then for $ 1\leq j \leq k $ and $ y\in E $, we estimate
    \begin{align*}
        \norm{(x_{j}^{\prime}-x_{j})\cdot(\xi \otimes \widehat{\psi})\cdot y}_{2}^{2} &\leq \norm{y}^2 \inp{(x_j^{\prime}-x_j)^*(x_j^{\prime} -x_j)(\xi\otimes \widehat{\psi})}{\xi\otimes \widehat{\psi}}\\
        &= \norm{y}^2 \tau\lr{\psi^*\psi \Phi_{\xi}((x_j^{\prime}-x_j)^*(x_j^{\prime} -x_j))}\\
        &\leq\norm{y}^2 \norm{ \Phi_{\xi}((x_j^{\prime}-x_j)^*(x_j^{\prime} -x_j))}_2^2\\
        &\leq\norm{y}^2 \norm{\Phi_{\xi}}^2 \norm{ (x_j^{\prime}-x_j)^*(x_j^{\prime} -x_j)}_2^2\\
        &\leq 4 \norm{x_{j}}^{2}\norm{y}^{2} \norm{\xi}_{\infty}^{4} \norm{x_{j}^{\prime}-x_{j}}_{2} <\eps
    \end{align*}

    Apply condition $ (\ast\ast) $ to $ \delta,\,F^{\prime},\,E $ to obtain $ \eta_{1},\ldots, \eta_{l} $. Then for $ 1\leq j \leq k $ and $ y\in E $, we apply Lemma \ref{lem:phicringe}, Cauchy-Schwarz, and the Kadison-Schwarz inequality (see \cite[Proposition 3.3]{Pa02}) to compute
    \begin{align*}
        \left|\sum_{i=1}^{l} \omega_{\eta_{i}\otimes \widehat{\psi}}(x_{j} \otimes y) - \sum_{i=1}^{l} \omega_{\eta_{i}\otimes \widehat{\psi}}(x_{j}^{\prime}\otimes y) \right| &= \left|\sum_{i=1}^{l} \omega_{\eta_{i}\otimes \widehat{\psi}}((x_{j}-x_{j}^{\prime}) \otimes y) \right|\\
        &= \left|\sum_{i=1}^{l} \inp{(x_{j}-x_{j}^{\prime})\cdot(\eta_{i}\otimes \widehat{\psi y})}{\eta_{i}\otimes \widehat{\psi}} \right|\\
        &= \left|\tau\left( \psi y^{*} \psi^{*} \sum_{i=1}^{l}\Phi_{\eta_{i}}(x_{j}-x_{j}^{\prime})\right) \right|\\
        &\leq\norm{\psi y^{*} \psi^{*} }_{2} \norm{\sum_{i=1}^{l}\Phi_{\eta_{i}}(x_{j}-x_{j}^{\prime})}_{2}\\
        &\leq \norm{y} \norm{\sum_{i=1}^{l}\Phi_{\eta_{i}}(x_{j}-x_{j}^{\prime})}_{2}\\
        &\leq \norm{y} \norm{\sum_{i=1}^{l}\inp{\eta_{i}}{\eta_{i}}_{A}}_{\infty} \norm{x_{j}-x_{j}^{\prime}}_{2} \leq C_{\xi}\norm{y} \delta < \eps.
    \end{align*}
    Now finally, we estimate, for $ 1\leq j \leq k $ and $ y\in E $ 
    \begin{align*}
        &\left| \omega_{\xi \otimes \widehat{\psi}}(x_{j} \otimes y) - \sum_{i=1}^{l} \omega_{\eta_{i} \otimes \widehat{\psi}}(x_{j} \otimes y)\right|  \\
        &\leq |\omega_{\xi \otimes \widehat{\psi}}((x_{j}-x_{j}^{\prime}) \otimes y)| + |\omega_{\xi \otimes \widehat{\psi}}(x_{j}^{\prime} \otimes y) - \sum_{i=1}^{l} \omega_{\eta_{i} \otimes \widehat{\psi}}(x_{j} \otimes y)|\\
        &\leq \eps + |\omega_{\xi \otimes \widehat{\psi}}(x_{j}^{\prime} \otimes y) - \sum_{i=1}^{l} \omega_{\eta_{i} \otimes \widehat{\psi}}(x_{j}^{\prime} \otimes y)|+ |\sum_{i=1}^{l} \omega_{\eta_{i} \otimes \widehat{\psi}}(x_{j}^{\prime} \otimes y) - \sum_{i=1}^{l} \omega_{\eta_{i} \otimes \widehat{\psi}}(x_{j} \otimes y)|\\
        &\leq \eps + \delta + \eps \leq 3\eps.
    \end{align*}
    Hence we have shown that, for $ E,F\sub M $ finite subsets and $ \eps>0 $, there exists $ \omega\in \Span\{\omega_{\eta \otimes \widehat{\psi}}: \eta\in \int_{X}^{
    \oplus}\K_{x}\dd{\mu(x)},\, \norm{\eta}_\infty \leq 2\} $ such that 
    \[
        |\omega_{\xi \otimes \widehat{\psi}}(x \otimes y) - \omega(x \otimes y)| < \eps\quad \text{for all }x\in F,\, y\in E.
    \]

    By linearity, uniform boundedness, and $ \mathrm{max} $-norm continuity of the functionals involved, this result upgrades to finite subsets $ F\sub \cls{M\odot M^{\mathrm{op}}}^{\norm{\cdot}_{\mathrm{max}}} = M \otimes_{\mathrm{max}} M^{\mathrm{op}}$. Hence, we have shown that 
    \[\omega_{\xi \otimes \widehat{\psi}} \in \cls{\Span}^{\mathrm{wk}^{*}}\{\omega_{\zeta}: \zeta\in \B(\rho)\}, \]
 and the proof is finished. \end{proof}

\begin{proof}[Proof of Proposition \ref{prop:bimwkcont}]
        Without loss of generality, let $ \xi\in S_{1}(X\ast\H) $ and fix $ \delta>0 $. Suppose $ E = \{a_{1}u_{\sigma_{1}}, \ldots, a_{p}u_{\sigma_{p}}\} \sub D$ and $ F\sub L(\rG) $ is finite. Set $ \Sigma:=\{\sigma_{1},\ldots, \sigma_{p}\} $, $ Q:= \bigcup_{i=1}^{p} \sigma_{i} $. Without loss of generality, we assume that the identity element $ X = \rG^{(0)} $ is in $ \Sigma $, so $ X\sub Q $. Consider the quantity

    \[
        \eps:= \frac{\delta}{4(1 + \max\limits_{(au_{\sigma}, y)\in E\times F}\norm{\E_{A}(a u_{\sigma} \psi y \psi*)})}.
    \]

    As $ \mu_{\rG}(Q) \leq p < +\infty $, we may apply the weak containment condition with $ Q, \xi, \eps $ to obtain $ \eta^{1},\ldots, \eta^{s}\in S(X\ast\K) $ such that the set 
    \[
        Q_{\eps}:=\{g\in Q: |\inp{\pi(g) \xi_{\d(g)}}{\xi_{\r(g)}} -\sum_{i=1}^{s} \inp{\rho(g)\eta^{i}_{\d(g)}}{\eta^{i}_{\r(g)}}| > \eps\}.
    \]
    has $ \mu_{\rG}(Q_{\eps}) \leq \eps $. Then $ \mu_{\rG}(X\cap Q_{\eps}) = \mu_{\rG}(X\cap Q_{\eps}) \leq \eps $, whence the set $ S_{\eps}:= X\setminus (X\cap Q_{\eps}) $ has $ \mu(S_{\eps}) > 1-\eps $. Moreover, as $ X\sub Q $, this set has the form
    \[
        S_{\eps} = \{x\in X : |1- \sum_{i=1}^{s} \norm{\eta^{i}_{x}}| \leq \eps\}.
    \]
    For all $ x\in S_{\eps} $, we have $\sum_{i=1}^{s} \inp{\eta^i}{\eta^i}_A(x) =  \sum_{i=1}^{s} \norm{\eta^{i}_{x}}^{2} < 1+ \eps \leq 2 $, whence the sections given by $ \widetilde{\eta}_{x}^{i}:= 1_{S_{\eps}}(x) \eta^{i}_{\xi}  $ are bounded.\\

    Then, for $ au_{\sigma}\in E $ and $ y\in F $, setting $ f := \E_{A}(au_{\sigma} \psi y \psi^{*}) $ we have on one hand that
    \begin{align*}
        \bigg|\int_{\r(Q_{\eps})}& \E_{A}(au_{\sigma}\psi y \psi^{*})\cdot \left(\inp{\pi(x \sigma) \xi_{\d(x \sigma)}}{\xi_{x}} - \sum_{i=1}^{s} \inp{\rho(g) \widetilde{\eta}^{i}_{\d(x \sigma)}}{\widetilde{\eta}^{i}_{x}}\right)\dd{\mu(x)}\bigg|  \\
        &\leq \int_{\r(Q_{\eps})}|f(x)| \lr{1 + \sum_{i=1}^{s} \norm{\widetilde{\eta}^{i}_{x}}^{2}} \dd{\mu(x)} \leq 3 \eps \norm{f}_{\infty}.
    \end{align*}
    On the other hand, suppose that $ x\in X\setminus \r(Q_{\eps}) $. Then by definition $ x \sigma\not \in Q_{\eps} $, and we know that $ x \sigma\in \sigma\sub Q $. Hence, we compute

    \begin{align*}
        \bigg|\int_{X\setminus \r(Q_{\eps})}& f(x)\cdot \lr{\inp{\pi(x \sigma) \xi_{\d(x \sigma)}}{\xi_{x}} - \sum_{i=1}^{s} \inp{\rho(g) \widetilde{\eta}^{i}_{\d(x \sigma)}}{\widetilde{\eta}^{i}_{x}}}\dd{\mu(x)}\bigg|  \\
        &\leq  \mu(X\setminus \r(Q_{\eps})) \eps \norm{f}_{\infty} \leq \eps\norm{f}_{\infty}.
    \end{align*}
    Combining these estimates, we obtain that
    \begin{align*}
        |\inp{au_{\sigma}\cdot (\xi \otimes \widehat{\psi})\cdot \phi}{\xi \otimes \widehat{\psi}} - \sum_{i=1}^{s} \inp{au_{\sigma}\cdot (\widetilde{\eta}^{i} \otimes \widehat{\psi})\cdot \phi}{\widetilde{\eta}^{i} \otimes \widehat{\psi}} | < 4 \eps \norm{\E_{A}(au_{\sigma} \psi y \psi^{*})}_{\infty} \leq \delta 
    \end{align*}
    as desired. By linearity, this result extends to finite subsets $E\sub \Span\{au_\sigma: a\in A, \sigma\in [\rG]\}$, so we have shown the conditions of \ref{lem:omegainspan} apply.
    
    Consider now the set $ V $ of vectors $ v\in B(\pi) $ such that $ \omega_{v}\in \cls{\Span}^{wk^{*}}\{ \omega_{\eta}:\eta\in B(\rho)\} $. By the argument in \cite[Lemma F.1.3]{BeVa08}, $ V $ is actually closed under sums and scaling and is closed in $ B(\pi) $. Define a span-dense subset $ \mathscr{S}\sub \B(\pi) $ by 
    \[
        \mathscr{S}:=\{\xi \otimes \widehat{\psi}\in \B(\pi): \xi\in \int_{X}^{\oplus}\H_{x}\dd{\mu(x)},\, \norm{\xi}_{\infty}<+\infty,\, \psi\in L(\rG)\}.
    \]
    Applying Lemma \ref{lem:omegainspan}, we see that for $ \zeta\in \mathscr{S} $, we have that $ \omega_{\zeta}\in \cls{\Span}^{wk^{*}}\{ \omega_{\eta}:\eta\in B(\rho)\} $. Hence $ \mathscr{S} $ is contained in $ V $, so $ V = \B(\pi) $. 
\end{proof}

\section{Primeness}\label{primeness}

We now have all the necessary ingredients to prove our primeness result, so we proceed to do so. Throughout this chapter, we will set $M:=L(\rG)$, $A:=L^\infty(X,\mu)$, and $\widetilde{M}:=L(\widetilde{\rG})$ to denote the Gaussian construction applied to $\rG$ and some $1$-cocycle which will be clear in context.

\begin{lem}\label{lem:primelem}
     Let $\rG$ be a discrete measured groupoid which admits a $ 1 $-cocycle into a mixing orthogonal $ \rG $-representation $ \pi $ which is weakly contained in the regular representation. Then 
     \begin{enumerate}
         \item[(i)] $_M L^2(\widetilde{M}) \ominus L^2(M)_M$ is mixing with respect to $A$.
         \item[(ii)] $_M L^2(\widetilde{M}) \ominus L^2(M)_M  \prec {}_M L^2(M)\otimes L^2(M)_M$.
     \end{enumerate}
     If in addition the cocycle is strongly unbounded, then the Gaussian deformation $ \alpha_{t} $ with respect to this cocycle does not converge to the identity uniformly on the unit ball of $ M $.
\end{lem}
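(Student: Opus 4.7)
The strategy is to translate everything into the bimodule formalism of Section \ref{bimoddddd} and then feed Propositions \ref{prop:bimmixing} and \ref{prop:bimwkcont} the appropriate inputs; the last claim will be a direct computation using strong unboundedness. The key preliminary step, and the main technical obstacle, is to identify $L^2(\widetilde{M})\ominus L^2(M)$ as an $M$-$M$ bimodule via the Fock realization of the Gaussian extension. Since $L^2(\widetilde{M})\cong L^2(X\ast\Omega)\otimes_A L^2(\rG)$ (both fiber over $X$ with fiber $L^2(\Omega_x)\otimes\ell^2(\rG^x)$), combining Lemma \ref{fockspace} (which decomposes the Koopman representation of $\rG$ on $L^2(X\ast\Omega)\ominus A$ as $\bigoplus_{n\geq 1}\pi_{\C}^{\odot n}$) with the prescription of Proposition \ref{reptobim} should yield
\[
    {}_M \bigl(L^2(\widetilde{M})\ominus L^2(M)\bigr)_M \;\cong\; \bigoplus_{n=1}^{\infty}{}_M \B(\pi_{\C}^{\odot n})_M.
\]
The nontrivial bookkeeping is verifying that the left $M$-action inherited from $M\subset\widetilde{M}$, i.e.\ left convolution by $u_\sigma$ on $L^2(\widetilde{\rG})$, translates under the tensor splitting into the combined Koopman action on $L^2(X\ast\Omega)\ominus A$ and left convolution by $u_\sigma$ on $L^2(\rG)$, matching the bimodule structure from Proposition \ref{reptobim}.

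With this decomposition in hand, (i) and (ii) become direct applications. Since $\pi$ is mixing by hypothesis and tensor products of mixing representations are mixing, each $\pi_{\C}^{\odot n}$ ($n\geq 1$) is mixing; Proposition \ref{prop:bimmixing} therefore makes each summand $\B(\pi_{\C}^{\odot n})$ mixing relative to $A$, and a countable $\ell^2$-sum of bimodules mixing relative to $A$ remains mixing relative to $A$, giving (i). For (ii), I would first argue that $\pi_{\C}^{\odot n}\prec \lambda_\rG$ for every $n\geq 1$: by Fell's absorption $\pi\otimes\lambda_\rG$ is unitarily equivalent to $\mathrm{id}_\Xi\otimes\lambda_\rG$, which is a direct sum of copies of $\lambda_\rG$, and iterating using the stability of $\prec$ under tensoring with a fixed representation gives $\pi^{\otimes n}\prec\lambda_\rG$; since $\pi_{\C}^{\odot n}$ is a direct summand of $\pi_{\C}^{\otimes n}$, the weak containment passes to the symmetric part. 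Proposition \ref{prop:bimwkcont} then upgrades this to $\B(\pi_{\C}^{\odot n})\prec \B(\lambda_\rG)$, and $\B(\lambda_\rG)$ is naturally contained as a sub-bimodule of the coarse $M$-$M$ bimodule $L^2(M)\otimes L^2(M)$.

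For the final claim, I would exhibit a uniform lower bound on $\|\alpha_{b,t}(u_\sigma)-u_\sigma\|_2$ using strong unboundedness. The computation at the end of Section \ref{gaussianc}, combined with the fact that $u_\sigma u_\sigma^\ast = 1$ for $\sigma\in[\rG]$, gives
\[
    \|\alpha_{b,t}(u_\sigma) - u_\sigma\|_2^2 = 2\left(1 - \int_X e^{-t^2\|b(x\sigma)\|^2}\,d\mu(x)\right).
\]
Fix $t>0$, let $\delta>0$ be the parameter of strong unboundedness, choose $R=R(t)>0$ with $e^{-t^2R^2}\leq 1/2$, and use strong unboundedness to pick $\sigma\in[\rG]$ with $\mu(\{x\in X:\|b(x\sigma)\|\geq R\})>\delta$. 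Splitting the integral over the sub- and super-$R$ sets,
\[
    \int_X e^{-t^2\|b(x\sigma)\|^2}\,d\mu(x)\leq (1-\delta) + \delta\, e^{-t^2R^2}\leq 1-\delta/2,
\]
so $\|\alpha_{b,t}(u_\sigma)-u_\sigma\|_2^2 \geq \delta$. Since this estimate holds for every $t>0$ with a $\sigma$ chosen depending on $t$, one has $\sup_{x\in(M)_1}\|\alpha_{b,t}(x)-x\|_2 \geq \sqrt{\delta}$ for all $t>0$, ruling out uniform convergence on $(M)_1$.
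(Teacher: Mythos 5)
Your route for (i) and (ii) is essentially the paper's: identify $L^2(\widetilde M)\ominus L^2(M)$ with the bimodule of the Koopman/Fock representation via Lemma \ref{fockspace} and Proposition \ref{reptobim}, then feed Propositions \ref{prop:bimmixing} and \ref{prop:bimwkcont}; your extra steps (mixing of each $\pi_\C^{\odot n}$, stability of relative mixing under countable direct sums, $\widehat\pi\prec\lambda_\rG$ via Fell absorption plus passing to summands of tensor powers) are sound and if anything more detailed than the paper, which simply asserts that $\widehat\pi$ is mixing and weakly contained in $\lambda_\rG$. Your treatment of the last claim is also correct and a little cleaner than the paper's: the exact identity $\norm{\alpha_{b,t}(u_\sigma)-u_\sigma}_2^2=2\bigl(1-\int_X e^{-t^2\norm{b(x\sigma)}^2}\dd{\mu(x)}\bigr)$ for $\sigma\in[\rG]$ gives the uniform lower bound $\sqrt{\delta}$ directly, whereas the paper argues by contradiction through $\E_M(\alpha_{t_0}(u_\sigma))$.

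There is, however, one genuine error, in the final step of (ii): $\B(\lambda_\rG)\cong L^2(M)\otimes_A L^2(M)$ is \emph{not} ``naturally contained as a sub-bimodule of the coarse bimodule'' $L^2(M)\otimes L^2(M)$. Whenever $A=L^\infty(X)$ is diffuse (the main case of interest), no such containment can exist: the vector $\widehat{1}\otimes_A\widehat{1}\in L^2(M)\otimes_A L^2(M)$ is a nonzero $A$-central vector, i.e. $a\cdot(\widehat{1}\otimes_A\widehat{1})=(\widehat{1}\otimes_A\widehat{1})\cdot a$ for all $a\in A$, while the coarse bimodule, restricted to an $A$-$A$ bimodule, is a multiple of $L^2(X\times X)$ with $A$ acting by multiplication in the two separate variables; an $A$-central vector there must be supported on the diagonal, which is $\mu\times\mu$-null when $\mu$ is diffuse, hence is zero. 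So the containment you invoke fails, and what you actually need is the weaker, correct statement used in the paper: since $A$ is amenable, ${}_AL^2(A)_A\prec{}_A\bigl(L^2(A)\otimes L^2(A)\bigr)_A$, and applying Connes fusion with $L^2(M)$ on both sides gives $L^2(M)\otimes_A L^2(M)\prec L^2(M)\otimes L^2(M)$ as $M$-$M$ bimodules. Substituting this amenability argument for your containment claim repairs the proof; nothing else needs to change.
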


\begin{proof}
    
 Note first that $\widehat{\pi}$ is mixing and weakly contained in $\lambda_\rG$. By the construction (see Section \ref{gaussianc}) of the Gaussian bundle $X\ast \Omega$, we have a direct integral decomposition
    \[
        L^2(X\ast \Omega) \ominus L^2(X) \cong \int_X^\oplus [L^2(\Omega_x) \ominus \C] \dd{\mu(x)} = \int_X^\oplus \K_x \dd{\mu(x)}.
    \]
    Tensoring over $A$ with $L^2(\rG)$, we obtain
    \begin{equation}\label{orthocomp}
        _M L^2(\widetilde{M}) \ominus L^2(M)_M \cong [L^2(X\ast \Omega) \ominus L^2(X)]\otimes_A L^2(\rG) \cong \B(\rho).
    \end{equation}

    By Lemma \ref{fockspace} and Proposition \ref{prop:bimwkcont}, we know that $\B(\rho) \cong \B(\widehat{\pi})$ and $\B(\widehat{\pi}) \subwk \B(\lambda)$ as $M$-$M$ bimodules. Moreover, Proposition \ref{prop:bimmixing} and the identification \eqref{orthocomp} imply that $_M L^2(\widetilde{M}) \ominus L^2(M)_M$ is mixing with respect to $A$. By assumption, $\B(\widehat{\pi})\subwk \B(\lambda)$. Observe that, as $M$-$M$ bimodules, 

    \[
        \B(\lambda) = \int_X^{\oplus} \ell^2(\rG^x) \dd{\mu(x)} \otimes_A L^2(\rG) \cong L^2(M) \otimes_A L^2(M)
    \]
    Since $A$ is amenable, $L^2(M)\otimes_A L^2(M)$ is weakly contained in the coarse. Hence, we have shown items \emph{(i)} and \emph{(ii)}.

    Assume now that $ b $ is a strongly unbounded cocycle into $ \pi $. Then there exists a $ \delta>0 $ such that for all $ R>0 $, there is some full group element $ \sigma\in [\rG] $ such that $ \mu(\{\norm{b(x \sigma)}\geq R\}) > \delta $. Without loss of generality, assume $ \delta < 2 $.

    For $ \phi\in M $, we compute that
    \begin{equation}\label{popatransez}
        \norm{\alpha_{t}(\phi) - \E_{M}(\alpha_{t}(\phi))}_{2} \leq \norm{\alpha_{t}(\phi) - \phi}_{2} + \norm{\E_{M}(\phi - \alpha_{t}(\phi))}_{2} \leq 2 \norm{\alpha_{t}(\phi) - \phi}_{2}.
    \end{equation}

    Suppose, for the sake of contradiction, that $ \alpha_{t}\to {\rm id }$ uniformly on $ (M)_{1} $. Choose $ t_{0} > 0 $ such that
    \[
        \sup_{\phi\in (M)_{1}} \norm{\alpha_{t_{0}}(\phi) - \phi}_{2} < \frac{1}{2}\lr{1-\sqrt{1-\frac{\delta}{2}}} =: \gamma
    \]
    i.e. so that $ (1-2\gamma)^2 > 1-\frac{\delta}{2} $. Then, for $ \sigma\in [\rG] $, we apply \eqref{popatransez} and compute
    \begin{align*}
        \norm{\E_{M}(\alpha_{t_{0}}(u_{\sigma}))}_{2} &\geq \norm{\alpha_{t_0}(u_{\sigma})}_{2} -\norm{\alpha_{t_0}(u_{\sigma}) - \E_{M}(\alpha_{t_0}(u_{\sigma}))}_{2}\\
        & \overset{\eqref{popatransez}}{\geq} 1 - 2\norm{\alpha_{t_0}(\phi) - \phi}_{2} > 1-2\gamma,
    \end{align*}
    whence $  \norm{\E_{M}(\alpha_{t_{0}}(u_{\sigma}))}_{2}^{2} > 1-\frac{\delta}{2} $. Now choose $ R>0 $ big enough and such that $ e^{-2t_{0}^{2}R^2} < \frac{\delta}{2} $ and $ \sigma\in [\rG] $ with $ \mu(\{\norm{b(x \sigma)}\geq \sqrt{R}\}) < \delta $,
    \begin{align*}
        1-\frac{\delta}{2} \leq \norm{\E_{M}(\alpha_{t_{0}}(u_{\sigma}))}_{2}^{2} &= \norm{f_{c_{t_0},\sigma}u_{\sigma}}_{2}^{2} = \tau ( f_{c_{t_{0}},\sigma} \cls{ f_{c_{t_{0}},\sigma}}) = \int_{X} e^{-2t_{0}^2 \norm{b(x \sigma)}^{2}} \dd{\mu(x)} \\
        &\leq \int_{\{\norm{b(x \sigma)}^{2}< R\}} \dd{\mu(x)} + \int_{\{\norm{b(x \sigma)}^{2} \geq R\}}e^{2t_{0}^{2}R^{2}}\dd{\mu(x)}\\
        &\leq \mu(\{\norm{b(x \sigma)}^{2}< R\}) + e^{-2t_{0}^{2}R^{2}} \mu(\{\norm{b(x \sigma)}^{2}\geq R\})  < 1- \frac{\delta}{2},
    \end{align*}
    which is absurd.
\end{proof}

\begin{thm}\label{prime}
    Let $\rG$ be a discrete measured groupoid with no amenable direct summand which admits a strongly unbounded $1$-cocycle into a mixing orthogonal representation weakly contained in the regular representation. If $L(\rG)$ is a factor, then $L(\rG) \not \cong N\cls\otimes Q$ for any type $\mathrm{II}$ von Neumann algebras $N$ and $Q$. 
\end{thm}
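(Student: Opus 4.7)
The plan is to apply Popa's spectral gap argument (Theorem \ref{popaspectralgap}) to the Gaussian $s$-malleable deformation $(\alpha_t, \beta)$ of $M = L(\rG)$ inside $\widetilde{M} = L(\widetilde{\rG})$ constructed in Section \ref{gaussianc} from the given strongly unbounded $1$-cocycle $b$ into $\pi$. The three hypotheses of Theorem \ref{popaspectralgap} to verify are that $M$ has no amenable direct summand, that the orthocomplement bimodule $_M L^2(\widetilde M)\ominus L^2(M)_M$ is weakly contained in the coarse $M$--$M$ bimodule, and that it is mixing relative to the abelian subalgebra $A = L^\infty(X,\mu) \subseteq M$.

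First I would note that $M$ has no amenable direct summand because $\rG$ has none (using \cite[Corollary 6.2.12]{ADRe00}). The two bimodular hypotheses are then exactly the content of Lemma \ref{lem:primelem}(i)--(ii): mixingness of $_M L^2(\widetilde M)\ominus L^2(M)_M$ relative to $A$ comes from Proposition \ref{prop:bimmixing} applied to $\rho$ (which inherits the mixing property of $\pi$ through Lemma \ref{fockspace}), and weak containment in the coarse follows by weak containment of $\rho$ in $\lambda_\rG$, Proposition \ref{prop:bimwkcont}, and the identification $\B(\lambda_\rG) \cong L^2(M)\otimes_A L^2(M)$, which is weakly contained in the coarse bimodule by amenability of $A$.

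With these hypotheses in hand, Theorem \ref{popaspectralgap} yields a central projection $z \in \mathcal{Z}(M)$ such that $M(1-z)$ is prime and the deformation $\alpha_t$ converges to the identity uniformly on the unit ball of $Mz$. Since $L(\rG)$ is a factor by hypothesis, $z \in \{0,1\}$. The case $z = 1$ is ruled out by the last part of Lemma \ref{lem:primelem}, which uses precisely the strong unboundedness of $b$ to show that $\alpha_t$ does not converge uniformly on $(M)_1$. Therefore $z = 0$, and $M = M(1-z)$ is prime, which is the desired conclusion: $L(\rG)$ cannot decompose as $N \,\bar\otimes\, Q$ for type $\mathrm{II}$ factors $N, Q$.

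Since every conceptually hard input (the $s$-malleable deformation, Popa's spectral gap in the form packaged by de Santiago--Hayes--Hoff--Sinclair via maximal rigid envelopes, the bimodule analysis, and the rigidity obstruction from strong unboundedness) has already been established in earlier sections, the proof is essentially an assembly. The only place where one could anticipate a subtlety is making sure that the form of Theorem \ref{popaspectralgap} stated in the paper (which already hands us the central projection $z$ directly, rather than merely a commuting pair rigidity statement) is applicable without additional hypotheses; but this is immediate from the statement as written and our verified inputs.
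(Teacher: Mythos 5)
Your proposal is correct and follows essentially the same route as the paper: verify the hypotheses of Popa's spectral gap argument (Theorem \ref{popaspectralgap}) via Lemma \ref{lem:primelem} together with nowhere amenability of $M$, and rule out uniform convergence of $\alpha_t$ on $(M)_1$ using the strong unboundedness of the cocycle. The only cosmetic difference is that you extract the conclusion from the central projection $z$ plus factoriality, whereas the paper simply invokes the ``in particular'' clause of Theorem \ref{popaspectralgap}; these are equivalent readings of the same statement.
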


\begin{proof}
   By Lemma \ref{lem:primelem} and the assumption of nowhere amenability, we may apply Popa's spectral gap argument (Lemma \ref{popaspectralgap}) and conclude that $ M $ cannot be decomposed as a tensor product of two type II von Neumann algebras.
\end{proof}

We finish the section with more concrete examples/applications of our primeness result. 

\begin{cor}\label{corcross}
    Let $G$ be a countable group which admits an unbounded $1$-cocycle into a mixing orthogonal representation weakly contained in the regular representation, and let $G\curvearrowright (X,\mu)$ be an ergodic, non-amenable probability measure preserving action. Then $L^\infty(X,\mu)\rtimes G$ is prime.
\end{cor}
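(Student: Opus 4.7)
The plan is a direct reduction to Theorem \ref{prime} via the transformation groupoid construction. First I would set $\rG := X \rtimes_\theta G$ and invoke Example \ref{transformation} to identify $L(\rG)$ with $L^\infty(X,\mu) \rtimes G$. The remaining work is purely bookkeeping: I need to verify that each hypothesis of Theorem \ref{prime} on $\rG$ is inherited from the hypotheses on $G$ listed in the corollary.

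For the cocycle data, Example \ref{exstrongunbound} supplies exactly what is needed. Given an unbounded $1$-cocycle $b:G \to \H$ into a mixing orthogonal representation $\pi \prec \lambda_G$, the induced cocycle $b_\rG(g,x) := b(g)$ for $\rG$ takes values in a representation $\pi_\rG$ which is again mixing and satisfies $\pi_\rG \prec \lambda_\rG$. Moreover, $b_\rG$ is strongly unbounded: for each $g \in G$ the translation bisection $\sigma_g := \{(g,x) : x \in X\}$ lies in $[\rG]$ and satisfies $\|b_\rG(x\sigma_g)\| = \|b(g)\|$ for every $x \in X$, so given $R>0$ any choice of $g \in G$ with $\|b(g)\| \geq R$ forces $\mu(\{x : \|b_\rG(x\sigma_g)\| \geq R\}) = 1$, witnessing strong unboundedness with, say, $\delta = 1/2$.

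For the amenability condition, an amenable direct summand of $\rG$ would correspond to a $G$-invariant Borel subset $Y \subset X$ with $\mu(Y) > 0$ and $\rG|_Y$ amenable; ergodicity of the action forces $\mu(Y) \in \{0,1\}$, so such a $Y$ must be essentially all of $X$, and by Definition \ref{zimmer} this amounts to Zimmer-amenability of the action, contradicting the non-amenability hypothesis. Assuming finally that $L^\infty(X,\mu) \rtimes G$ is a factor (automatic, e.g., when $G$ is i.c.c.\ or the action is essentially free), Theorem \ref{prime} concludes the proof. There is no substantial obstacle beyond this translation of hypotheses from $G$ to $\rG$; all of the analytic work is already done in Example \ref{exstrongunbound} and Theorem \ref{prime}, so the corollary is essentially formal.
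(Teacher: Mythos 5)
Your proposal is correct and takes essentially the same route as the paper, whose proof of Corollary \ref{corcross} is precisely the one-line reduction: apply Theorem \ref{prime} to the transformation groupoid $\rG = X\rtimes_\theta G$, whose hypotheses (strongly unbounded cocycle into a mixing representation weakly contained in $\lambda_\rG$, no amenable direct summand via ergodicity plus non-Zimmer-amenability) are supplied by Example \ref{exstrongunbound}; you merely spell out the bookkeeping the paper leaves implicit. The only wrinkle, the factoriality hypothesis you insert at the end, is a mismatch already present between the statements of Theorem \ref{prime} and Corollary \ref{corcross} in the paper itself, so it is not a defect of your argument relative to theirs.
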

\begin{proof}
    It is enough to apply Theorem \ref{prime} to the transformation groupoid $\rG=X\rtimes G$, which satisfies all the hypotheses by the observations made in Example \ref{exstrongunbound}.
\end{proof}

In particular, this corollary applies to the so-called \emph{generalized Bernoulli shifts}. The idea is to consider an action of the countable group $G$ on a countable set $I$ and a measure space $(Y,\nu)$. With these ingredients, one builds the action $\theta:G\to {\rm Aut}(Y^{I},\nu^{\otimes I})$ given by $\theta_g\big((x_i)_{i\in I}\big)=(x_{g^{-1}i})_{i\in I}$. In this setting, we say that the action of $G$ on the countable set $I$ is amenable (compare to Definition \ref{zimmer}), if it satisfies the following F{\o}lner-type condition: For every finite $F\subset G$ and $\varepsilon>0$, there exists a finite set $J\subset I$ such that
$$
|gJ\Delta J|<\varepsilon|J|, \quad\text{ for all }g\in F.
$$
For generalized Bernoulli shifts, which are an important example on their own, we have the following result.

\begin{cor}[Corollary \ref{maincor3}]\label{genbern}
    Let $G$ be a countable group which admits an unbounded $1$-cocycle into a mixing orthogonal representation weakly contained in the regular representation, and let $G\curvearrowright (X,\mu)=(Y^I,\nu^{\otimes I})$ be a generalized Bernoulli shift. Further suppose that $G\curvearrowright I$ is nonamenable. Then $L^\infty(X,\mu)\rtimes G$ is a prime factor.
\end{cor}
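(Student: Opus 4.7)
The plan is to apply Corollary \ref{corcross} to the generalized Bernoulli action $G\curvearrowright (X,\mu):=(Y^I,\nu^{\otimes I})$. This requires verifying three conditions: the action is ergodic, it is Zimmer nonamenable, and $L^\infty(X,\mu)\rtimes G$ is a factor.

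For ergodicity, nonamenability of $G\curvearrowright I$ excludes finite $G$-orbits in $I$, since any finite orbit would carry a $G$-invariant mean that extends to all of $\ell^\infty(I)$. Given that every $G$-orbit in $I$ is then infinite, it is classical that the resulting generalized Bernoulli action on $(Y^I,\nu^{\otimes I})$ is weakly mixing, and in particular ergodic.

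For Zimmer nonamenability, the pullback-of-means argument along any orbit map $g\mapsto g\cdot i_0$ shows that $G$ itself must be nonamenable. If the Bernoulli action were Zimmer amenable, then by the equivalence between groupoid and von Neumann algebraic amenability (\cite[Corollary 6.2.12]{ADRe00}), the crossed product $L^\infty(X,\mu)\rtimes G$ would be amenable; but $L(G)$ sits inside as a unital subalgebra with a trace-preserving conditional expectation, so amenability would descend to $L(G)$, forcing $G$ to be amenable, a contradiction.

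For factoriality, the cocycle hypothesis on $G$ implies, via Peterson--Thom \cite{PeTh11}, that $\beta_1^{(2)}(G)>0$; consequently $G$ admits no infinite normal amenable subgroup and in particular is i.c.c. Together with the weak mixing of the Bernoulli action, standard arguments then yield that $L^\infty(X,\mu)\rtimes G$ is a $\mathrm{II}_1$ factor. With the three prerequisites verified, Corollary \ref{corcross} immediately produces primeness, completing the proof. The main point of care is the factoriality step, which relies on unpacking the cocycle hypothesis to a structural property of $G$; the ergodicity and Zimmer nonamenability steps reduce transparently to the hypothesis that $G\curvearrowright I$ is nonamenable.
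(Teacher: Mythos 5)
Your ergodicity step and your Zimmer-nonamenability step are both correct, and the latter is a genuinely different route from the paper's: the paper rules out amenability of the Bernoulli action via Kuhn's weak containment theorem together with Kechris--Tsankov (producing almost invariant vectors in $\lambda_G$), whereas you pass through the transformation groupoid: amenable action $\Rightarrow$ $L^\infty(X)\rtimes G$ injective (Definition \ref{zimmer} and \cite[Corollary 6.2.12]{ADRe00}) $\Rightarrow$ $L(G)$ injective via the trace-preserving conditional expectation $\Rightarrow$ $G$ amenable, contradicting nonamenability of $G$, which you correctly extract from nonamenability of $G\curvearrowright I$. For p.m.p.\ actions this is valid and arguably more self-contained than the paper's argument.

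The genuine gap is the factoriality step. First, Peterson--Thom is invoked in the wrong direction: their result quoted in this paper gives $\beta_1^{(2)}(G)>0\Rightarrow$ existence of an unbounded $1$-cocycle into the real regular representation; deducing $\beta_1^{(2)}(G)>0$ from an unbounded cocycle into a representation merely \emph{weakly contained} in $\lambda_G$ is not justified (the paper treats positive first $\ell^2$-Betti number only as a source of examples for this a priori larger class). Second, and decisively, even granting $\beta_1^{(2)}(G)>0$, the inference ``no infinite normal amenable subgroup, in particular i.c.c.'' is a non sequitur: finite conjugacy classes are not excluded. For example $G=F_2\times\Z/2\Z$ has $\beta_1^{(2)}(G)=1/2>0$ (so it satisfies the cocycle hypothesis) but is not i.c.c.; moreover, letting this $G$ act on $I=F_2$ through the $F_2$-factor, all hypotheses of Corollary \ref{genbern} hold, yet the canonical unitary of the central involution acts trivially on $Y^I$ and is therefore a nontrivial central unitary in $L^\infty(Y^I,\nu^{\otimes I})\rtimes G$, so factoriality actually fails at this level of generality and no argument can close this step without an extra hypothesis (e.g.\ $G$ i.c.c., or a faithful action with suitable essential freeness). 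Note that the paper's own proof is silent on this point---it only invokes Corollary \ref{corcross}, which yields primeness---so your instinct that factoriality needs a separate argument is sound, but the argument you give does not work.
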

\begin{proof}
    Since all actions of amenable groups are amenable, $G$ has to be nonamenable and $G\curvearrowright I$ has to have infinite orbits, making $G\curvearrowright (X,\mu)$ ergodic \cite[Proposition 2.1]{KeTs08}. Furthermore, since $G\curvearrowright I$ is nonamenable, $G\curvearrowright (X,\mu)$ has to be non-amenable too. Indeed, if $G\curvearrowright (X,\mu)$ was amenable, then by \cite[pag. 752]{Ku94} and \cite[Theorem 1.2]{KeTs08}, the left regular representation of $G$ would have almost invariant vectors, making $G$ amenable. Now the claimed result follows from the previous corollary.
\end{proof}

\section{Fullness}\label{fullness}

As mentioned before, our purpose for this section is to provide conditions that imply that the groupoid von Neumann algebra $L(\rG)$ is a full factor. Let us recall the definition of fullness, which was introduced by Connes \cite{Co74} as the negation of Murray-von Neumann's property Gamma \cite{MuvN43}.

\begin{defn}
    A factor with separable predual $M$ will be called \emph{full} if for any uniformly bounded sequence $\{\psi_n\}_{n=1}^\infty \subset M$ with $\norm{\psi_n\phi-\phi \psi_n}_2\to 0$ for any $\phi\in M$, must satisfy $\norm{\psi_n-\tau(\psi_n)1}_2\to 0$.
\end{defn}

In particular, it will be a standing assumption of the section that our groupoid $\rG$ is such that $L(\rG)$ is a factor. Since $L(\rG)$ admits a faithful tracial state, it is forced to be a ${\rm II}_{1} $-factor. The idea now is to provide conditions that ensure that $L(\rG)$ is a full factor. Let us note that a necessary condition for fullness is that of `strong ergodicity', which is defined as follows.

\begin{defn}
    A discrete measured groupoid $\rG$ is called \emph{strongly ergodic} if for any sequence of measurable subsets $Y_n\subset X$ such that 
    \[
    \lim_{n\to\infty} \mu(\si Y_n\si^{-1}\Delta Y_n)=0 \text{ for every } \si\in[\rG],
    \] we have $\lim_{n\to\infty} \mu(Y_n)\big(1-\mu(Y_n)\big)=0$.
\end{defn}

Before diving into our fullness result, let us state two lemmas that will be useful for its proof. The first one is a very simple statement coming from ergodic theory.

\begin{lem}\label{ergodicth}
    Let $(X,\mu)$ be a probability space and $f\in {\rm Aut}(X,\mu)$, with $f^2={\rm id}_X$ such that $F={\rm Fix}_f^c=\{x\in X :  f(x)\not=x\}$ has positive measure. Then there is a subset $E\subset F$ of positive measure such that $F\setminus\big(E\sqcup f(E)\big)$ is null.
\end{lem}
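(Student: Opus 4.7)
The goal is to construct a measurable fundamental domain for the free $\mathbb{Z}/2$-action of $f$ on $F$; note that $f$ preserves $F$ (since if $x \neq f(x)$ then $f(x) \neq f^2(x) = x$), so $f\vert_F$ is a fixed-point-free measurable involution of $F$. Once such an $E\subset F$ is produced, the equality $F = E \sqcup f(E)$ together with measure-preservation of $f$ forces $\mu(E) = \mu(F)/2 > 0$.

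To build $E$, I would exploit that the underlying space $(X,\mu)$ is a standard probability space, so its $\sigma$-algebra is countably generated modulo null sets. Fix a countable family $\{A_n\}_{n=1}^\infty$ of measurable subsets that separates points modulo a null set. For each $x \in F$, since $x \neq f(x)$, there exists some $n$ with $\mathbbm{1}_{A_n}(x) \neq \mathbbm{1}_{A_n}(f(x))$; define
$$
n(x) := \min\{n \geq 1 : \mathbbm{1}_{A_n}(x) \neq \mathbbm{1}_{A_n}(f(x))\}.
$$
This is a measurable $\N$-valued function on $F$ (off a null set) and satisfies $n(f(x)) = n(x)$. Set $F_n := \{x \in F : n(x) = n\}$; these are measurable, $f$-invariant, and partition $F$ up to a null set. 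Finally, define
$$
E := \bigsqcup_{n=1}^\infty (F_n \cap A_n) \subset F.
$$

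The key verification is that $F = E \sqcup f(E)$ modulo null. For $x \in F_n$, exactly one of $x$ and $f(x)$ lies in $A_n$ by the definition of $n(x)$, so $F_n = (F_n \cap A_n) \sqcup f(F_n \cap A_n)$, and unioning over $n$ gives $F = E \cup f(E)$. The disjointness $E \cap f(E) = \emptyset$ follows because if $x \in F_n \cap A_n$ and $x = f(y)$ with $y \in F_m \cap A_m$, then $f$-invariance of the $F_k$'s forces $n = m$, and then both $x$ and $f(x) = y$ would lie in $A_n$, contradicting the construction. Since $f$ is measure-preserving, $\mu(E) = \mu(f(E))$, hence $\mu(E) = \tfrac{1}{2}\mu(F) > 0$, completing the proof.

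No real obstacle here — the argument is a routine application of the standard Borel hypothesis implicit in the setup — so I expect the main thing to be careful about is writing the separation-of-points step cleanly and noting the $f$-invariance of the level sets $F_n$, which is what makes the fundamental-domain construction go through.
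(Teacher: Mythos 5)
Your proof is correct, but it follows a genuinely different route from the paper. You build a fundamental domain for the free $\mathbb{Z}/2$-action of $f$ on $F$ explicitly: using a countable separating family $\{A_n\}$ for the (standard) measure space, you attach to a.e.\ $x\in F$ the first index $n(x)$ at which $A_{n}$ distinguishes $x$ from $f(x)$, observe that the level sets $F_n$ are $f$-invariant and partition $F$, and take $E=\bigcup_n (F_n\cap A_n)$; the verification that $F=E\sqcup f(E)$ up to null sets is exactly as you say, and measure preservation gives $\mu(E)=\tfrac12\mu(F)>0$. The paper instead argues in two steps: it first produces \emph{some} positive-measure set $E_0\subset F$ with $\mu(E_0\cap f(E_0))=0$ by looking at a non-trivial element of the full group of the orbit equivalence relation of $f$ on $F$, and then runs an exhaustion/maximality argument (Zorn, plus countability of families of disjoint positive-measure sets) to saturate such pieces into a set $E$ with $F\setminus(E\sqcup f(E))$ null. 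Your construction buys a direct, one-shot fundamental domain with no exhaustion, at the cost of invoking the countably generated $\sigma$-algebra explicitly (harmless here, since the lemma is applied to the standard unit space $\rG^{(0)}$, and the paper's full-group argument also implicitly uses this structure). It also sidesteps a delicacy in the paper's first step: not every non-trivial full group element works there (e.g.\ the graph of $f$ itself projects onto all of $F$), so the element must be chosen so that its support is not $f$-invariant, whereas your choice of $E$ satisfies $E\cap f(E)=\emptyset$ by construction.
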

\begin{proof}
Consider the equivalence relation $\mathcal R\subset F\times F$ given by the orbits of $f$ and let $\si\in[\mathcal R]$ be any non-trivial full group element. Projecting $\si\setminus {\rm Diag}(F)$ onto the first coordinate gives a set of positive measure $E\subset X$ that has to satisfy $\mu\big(f(E)\cap E\big)=0$.

Now, by Zorn's lemma, one can find a maximal family of subsets $\{E_i\}_{i\in I}$ with positive measure and such that $\mu\big(f(E_i)\cap E_j\big)=0$ for all $i,j\in I$ and $E_i\cap E_j=\emptyset$, for $i\not=j$. Such a family must necessarily be countable, and their union $E=\bigcup_{i\in I}E_i$ is the required subset.
\end{proof}

In order to prove the next lemma, we need to recall Sorin Popa's incredibly powerful \textit{intertwining-by-bimodules} technique \cite[Theorem 2.1]{Po03}.
\begin{thm}[Popa's Intertwining by Bimodules]
    Let $ (M,\tau) $ be a tracial von Neumann algebra and $ P\sub pMp $, $ Q\sub qMq $ von Neumann subalgebras. Let $ \U\sub \U(P) $ be a subgroup which generates $ P $ as a von Neumann algebra. Then the following are equivalent:
    \begin{enumerate}
        \item[(i)] There are projections $ p_{0}\in P $, $ q_{0}\in Q $, a $ * $-homomorphism $ \theta:p_{0}Pp_{0}\to q_{0}Qq_{0} $, and a nonzero partial isometry $ v\in q_{0}Mp_{0} $ such that $ \theta(x)v = vx $ for all $ x\in p_{0}Pp_{0} $.
        \item[(ii)] There does not exist a sequence $ (u_{n})_{n=1}^{\infty} $ in $ \U $ such that $ \norm{\E_{Q}(xu_{n}y)}_{2}\to0 $ for all $ x,y\in M $.
    \end{enumerate}
    If either of above equivalent conditions are satisfied, we write $ P\prec_{M}Q $.
\end{thm}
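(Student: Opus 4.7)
The plan is to prove the two directions separately, essentially following Popa's original argument. The implication $(i) \Rightarrow (ii)$ is a direct computation from the intertwining identity. Assume $(i)$ holds with partial isometry $v$ and $*$-homomorphism $\theta$, and suppose for contradiction that there is a sequence $u_n \in \U$ with $\|\E_Q(xu_ny)\|_2 \to 0$ for all $x,y \in M$. From $\theta(p_0u_np_0)v = v(p_0u_np_0)$, multiplying by $v^*$ on the right and using that $\theta(p_0u_np_0) \in Q$, one extracts an identity $\theta(p_0u_np_0)\E_Q(vv^*) = \E_Q(v(p_0u_np_0)v^*)$. Choosing $x = v p_0$, $y = p_0 v^*$, the right-hand side tends to zero in $\|\cdot\|_2$ by hypothesis, while $\theta(p_0u_np_0)$ is a contraction; iterating the identity and comparing norms forces $\E_Q(vv^*) = 0$, hence $vv^* = 0$ and $v = 0$, contradicting the hypothesis.

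For the substantive direction $(ii) \Rightarrow (i)$, I would work inside the Jones basic construction $\langle M, e_Q\rangle$ equipped with its canonical semifinite trace $\mathrm{Tr}$ satisfying $\mathrm{Tr}(xe_Qy) = \tau(xy)$ for $x,y \in M$. Consider the weak operator closed convex hull
\[
K \coloneqq \overline{\mathrm{conv}}^{\mathrm{wot}}\{u(qe_Qq)u^* : u \in \U\} \subseteq \langle M, e_Q\rangle.
\]
Since $qe_Qq$ is a projection of $\mathrm{Tr}$-trace $\tau(q)$ and $u$ is unitary in $pMp$, one has a uniform bound $\|u(qe_Qq)u^*\|_{2,\mathrm{Tr}} \leq \tau(q)^{1/2}$, so $K$ is a bounded convex subset of the Hilbert space $L^2(\langle M, e_Q\rangle, \mathrm{Tr})$. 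Hilbert-space geometry provides a unique element $h_0 \in K$ of minimal $\|\cdot\|_{2,\mathrm{Tr}}$-norm, and uniqueness of minimizers forces $uh_0u^* = h_0$ for every $u \in \U$. Since $\U$ generates $P$ as a von Neumann algebra, $h_0 \in P' \cap \langle M, e_Q\rangle$ and it is a positive element of finite $\mathrm{Tr}$-norm.

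The main obstacle is to prove $h_0 \neq 0$, and this is where $(ii)$ is essential. If instead $h_0 = 0$, then there are convex combinations $k_\alpha = \sum_i \lambda_{\alpha,i} u_{\alpha,i}(qe_Qq)u_{\alpha,i}^*$ with $\|k_\alpha\|_{2,\mathrm{Tr}} \to 0$. Expanding the squared norm using the Jones identity $e_Q x e_Q = \E_Q(x) e_Q$ and the trace formula, one obtains
\[
\sum_{i,j}\lambda_{\alpha,i}\lambda_{\alpha,j}\|\E_Q(qu_{\alpha,j}^{-1}u_{\alpha,i}q)\|_2^2 \to 0,
\]
and from this, by standard extraction and the group structure of $\U$, a sequence $u_n \in \U$ with $\|\E_Q(xu_ny)\|_2 \to 0$ for every $x,y \in M$ emerges, contradicting $(ii)$. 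Once $h_0 \neq 0$ is established, take a nonzero spectral projection $f = \mathbf{1}_{[c,\|h_0\|]}(h_0)$ for small $c > 0$; this is a projection in $P' \cap \langle M, e_Q\rangle$ of finite $\mathrm{Tr}$-trace. By the standard dictionary between such projections and $P$-$Q$-subbimodules of $L^2(M)$ of finite right Jones index, $f$ corresponds to a bimodule whose right $Q$-module basis can be assembled into a rectangular matrix that yields the projections $p_0 \in P$, $q_0 \in Q$, the $*$-homomorphism $\theta$, and the nonzero partial isometry $v$ satisfying $\theta(x)v = vx$. I expect the extraction step to be the most delicate: one must carefully exploit the finite-index structure on the right to produce a homomorphism with the prescribed domain and codomain, rather than merely an intertwining of bimodules.
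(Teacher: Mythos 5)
The paper does not prove this theorem; it is quoted from Popa, so the only meaningful comparison is with the standard proof. Your treatment of $(ii)\Rightarrow(i)$ is exactly that standard argument (the $\norm{\cdot}_{2,\mathrm{Tr}}$-bounded convex hull of $\{u(qe_Qq)u^{*}:u\in\U\}$ in the basic construction, the unique minimal-norm element $h_0$, its $\U$-invariance and hence membership in $P'\cap\langle M,e_Q\rangle$, the expansion $e_Qxe_Q=\E_Q(x)e_Q$ to show $h_0\neq 0$ under hypothesis (ii), and a finite-trace spectral projection converted into an intertwiner via a Pimsner--Popa basis); as an outline it is correct, and the technical points you flag (the closure/extraction step, and cutting the matrix-amplified intertwiner down to corners $p_0Pp_0\to q_0Qq_0$) are indeed where the remaining work lies.

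The direction $(i)\Rightarrow(ii)$, however, has a genuine gap. Your identity $\theta(p_0u_np_0)\,\E_Q(vv^{*})=\E_Q(vp_0u_np_0v^{*})$ is correct, and the right-hand side does tend to $0$, but this does not force $\E_Q(vv^{*})=0$: the left factor $\theta(p_0u_np_0)$ is only a contraction and can itself tend to $0$, in which case the identity carries no information. This is not a removable technicality --- for a unitary $u\in\U(P)$ one can have $p_0up_0=0$ exactly (take $u$ a symmetry exchanging $p_0$ with an equivalent orthogonal projection in $P$), so no amount of ``iterating the identity'' with the elements $p_0u_np_0$ alone can produce a contradiction; the corner compressions simply do not see enough of the unitaries $u_n$. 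The correct argument must exploit that the $u_n$ are unitaries of all of $P$. The standard route is to first upgrade the corner intertwiner to a nonzero projection $f\in P'\cap\langle M,e_Q\rangle$ with $\mathrm{Tr}(f)<\infty$ (equivalently, a nonzero $P$-$Q$ subbimodule of $L^{2}(M)$ finitely generated as a right $Q$-module). This upgrade is itself not automatic: the naive bimodule $\overline{Pv^{*}Q}$ need not have finite right $Q$-dimension; one first cuts by a spectral projection of the central trace of $p_0$ in $P$ so that finitely many partial isometries $w_k\in P$ with $w_k^{*}w_k\leq p_0$ cover the central support, and then assembles a matrix-amplified intertwiner defined on all of (a central cut of) $P$, whose associated bimodule $\overline{\sum_k w_kv^{*}Q}$ is finitely generated over $Q$. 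Once $f$ is in hand, the contradiction is the usual trace computation: $\mathrm{Tr}(fu_nfu_n^{*})=\mathrm{Tr}(f)>0$ because $f\in P'$, while writing $f=\sum_{j}\xi_j e_Q\xi_j^{*}$ for a finite right $Q$-basis gives $\mathrm{Tr}(fu_nfu_n^{*})\leq C\sum_{j,k}\norm{\E_Q(\xi_j^{*}u_n\xi_k)}_2\to 0$. So the ``easy'' direction in the corner formulation is not the two-line computation you propose; as written, your argument for it would fail.
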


The following lemma is well-known and largely due to Ioana, Popa and Vaes \cite[Lemma 2.3]{IoPoVa10}, the difference here being that we ensure the centrality of the projections involved. 

\begin{lem}\label{standard}
   Let $ P\sub pMp $ be a von Neumann subalgebra, $ _M \H_{P} $ an $ M $-$ P $ bimodule, and $ \kappa>0 $. Suppose that there is a sequence $ (\xi_{n})_{n=1}^{\infty} $ in $ \H $ such that
   \begin{enumerate}
       \item[(i)] $ \norm{[a,\xi_{n}]} \xrightarrow{n\to\infty}0 $ for all $ a\in P $,
       \item[(ii)] $ \norm{x \xi_{n}}\leq \kappa \norm{x}_{2} $ for all $ x\in M $,
       \item[(iii)] $ \limsup_{n\to\infty}\norm{p \xi_{n}}>0 $.
   \end{enumerate}
   Then there is some nonzero projection $ z\in \mathcal{Z}(P^{\prime}\cap pMp) $ such that $_M L^{2}(M) _{Pz} $ is weakly contained in $ _M \H_{Pz} $.
\end{lem}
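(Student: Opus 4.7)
The plan is to follow the Ioana--Popa--Vaes strategy of extracting a positive intertwiner from the asymptotically $P$-central vectors, then pushing it into the center of $P'\cap pMp$ via Dixmier averaging. After replacing $\xi_{n}$ by $p\xi_{n}$ (which preserves (i)--(iii): condition (iii) survives because $\norm{p(p\xi_{n})}=\norm{p\xi_{n}}$, and (i) survives since $P\subseteq pMp$), I may assume $p\xi_{n}=\xi_{n}=\xi_{n}p$. Define $\omega_{n}$ on $M\odot P^{\mathrm{op}}$ by $\omega_{n}(x\otimes y^{\mathrm{op}}) = \inp{x\xi_{n}y}{\xi_{n}}$. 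Because the $M$--$P$ bimodule structure provides a $*$-homomorphism $M\odot P^{\mathrm{op}}\to \B(\H)$, each $\omega_{n}$ is a positive functional on $M\otimes_{\max}P^{\mathrm{op}}$ of norm $\norm{\xi_{n}}^{2}\leq \kappa^{2}$ by condition (ii). Passing to an ultrafilter weak-$*$ limit yields a positive functional $\omega$ satisfying $\omega|_{M}(x^{*}x)\leq \kappa^{2}\tau(x^{*}x)$, so Radon--Nikodym provides a unique $T\in (pMp)_{+}$ with $T\leq \kappa^{2}$ and $\omega(x)=\tau(xT)$ for all $x\in M$. A direct asymptotic calculation using (i) gives
\[
    \omega(ax\otimes 1)=\omega(x\otimes a^{\mathrm{op}})=\omega(xa\otimes 1) \quad \text{for } a\in P,\, x\in M,
\]
so $\tau(x(aT-Ta))=0$ for all $x\in M$, forcing $T\in P'$. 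Finally, $\tau(pT)=\lim_{n}\norm{p\xi_{n}}^{2}>0$ along a subsequence by (iii), so $T\in (P'\cap pMp)_{+}\setminus\{0\}$.

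Next, I would invoke Dixmier's averaging theorem inside the finite von Neumann algebra $P'\cap pMp$ to obtain $\widetilde{T}:= \E_{\mathcal{Z}(P'\cap pMp)}(T)\in \mathcal{Z}(P'\cap pMp)_{+}\setminus\{0\}$, which lies in the norm closure of $\mathrm{conv}\{uTu^{*}:u\in \U(P'\cap pMp)\}$. For each $\eps>0$, pick a finite convex combination $T_{\eps}=\sum_{j}\lambda_{j}u_{j}Tu_{j}^{*}$ with $\norm{T_{\eps}-\widetilde{T}}<\eps$. Since each $u_{j}\in P'$ commutes with $y\in P$, one checks that for $x\in M$ and $y\in P$,
\[
    \sum_{j}\lambda_{j}\inp{xu_{j}\xi_{n}y}{u_{j}\xi_{n}} = \sum_{j}\lambda_{j}\inp{u_{j}^{*}xu_{j}\xi_{n}y}{\xi_{n}} \xrightarrow{n\to\infty} \sum_{j}\lambda_{j}\tau(u_{j}^{*}xu_{j}yT) = \tau(xyT_{\eps}).
\]
Thus $\tau(xy\widetilde{T})$ is approximable by finite sums of coefficients of vectors in $\H$.

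For the final rescaling, let $z:= 1_{[\lambda,\infty)}(\widetilde{T})\in \mathcal{Z}(P'\cap pMp)$ be a nonzero spectral projection of $\widetilde{T}$ with $\lambda>0$ (which exists since $\widetilde{T}\neq 0$), and set $S:=z\widetilde{T}^{-1/2}\in \mathcal{Z}(P'\cap pMp)$, so that $S\widetilde{T}S=z$ and $S$ commutes with $P$. A trace computation exploiting cyclicity, $z^{2}=z$, and centrality of $z$ in $P'\cap pMp$ yields, for $x\in M$ and $y\in Pz$,
\[
    \tau(xyz) = \tau(SxS\cdot y\cdot\widetilde{T}).
\]
Applying the previous approximation with $SxS$ in place of $x$ and using $Su_{j}=u_{j}S$ produces vectors $\eta_{n}^{j}:=\sqrt{\lambda_{j}}\,u_{j}S\xi_{n}\in\H$ with $\sum_{j}\inp{x\eta_{n}^{j}y}{\eta_{n}^{j}}\to \tau(xyz) = \inp{x\hat{z}y}{\hat{z}}_{L^{2}(M)}$. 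Since $\hat{z}$ is a cyclic vector for the $M$--$Pz$ bimodule ${}_{M}L^{2}(M)_{Pz}$, this establishes the required weak containment. I expect the main obstacle to be the Dixmier averaging step: one must verify that the rotated vectors $u_{j}\xi_{n}$ retain the $P$-asymptotic centrality needed for the $L^{2}$-approximation, which is exactly why the averaging is done inside $P'\cap pMp$ rather than over arbitrary unitaries of $M$.
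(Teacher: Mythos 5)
Your strategy is essentially the paper's, executed with one technical variation. Both arguments extract a positive element $T\in P'\cap pMp$ whose trace functional is an asymptotic limit of the coefficients $\langle\,\cdot\,\xi_n,\xi_n\rangle$ (the paper takes weak limits of the Radon--Nikodym densities $T_n$ of the functionals $\phi_n$, you take a weak-$*$ limit of the functionals on $M\otimes_{\max}P^{\mathrm{op}}$ first; these are equivalent, and your commutation argument for $T\in P'$ and the subsequence remark for $\tau(pT)>0$ are fine). The difference is in how one passes to the center: the paper replaces a spectral piece of $T$ by a fundamental subprojection $\widetilde q$ with partial isometries $v_i$ summing to a central projection $z$ (comparison theory), which yields exact identities, whereas you apply Dixmier averaging to get $\widetilde T=\E_{\mathcal{Z}(P'\cap pMp)}(T)$ and work with $\eps$-approximations by convex combinations $\sum_j\lambda_j u_jTu_j^*$. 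Your bookkeeping there is correct (all of $u_j$, $S$, $z$ lie in $P'\cap pMp$ with $S,z$ central in it, so they commute with $P$ and with each other), and the $\eps$-losses are harmless since weak containment only requires approximation on finite sets.

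There is, however, a genuine flaw in the last step: $\widehat{z}$ is \emph{not} a cyclic vector for ${}_ML^{2}(M)_{Pz}$. Since every $y\in Pz$ satisfies $y=yz$, any vector $x\widehat{z}\,y$ lies in the range of right multiplication by $z$, so the cyclic sub-bimodule generated by $\widehat{z}$ is contained in (indeed equals) $\overline{M\widehat{z}}=L^{2}(M)z$, a proper subspace whenever $z\neq 1$. As written, approximating the coefficients of the single vector $\widehat{z}$ only gives weak containment of that cyclic sub-bimodule, not of all of ${}_ML^{2}(M)_{Pz}$. Two easy repairs: (1) insert an arbitrary $a\in M$, as the paper does: for $y\in Pz$ one has $\tau(a^{*}xay)=\tau(a^{*}xayz)=\tau(Sa^{*}xaSy\widetilde{T})$, and the same computation with the vectors $\sqrt{\lambda_j}\,au_jS\xi_n$ approximates $\langle x\widehat{a}\,y,\widehat{a}\rangle$ for every $a\in M$; density of $M$ in $L^{2}(M)$ then finishes the proof. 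Or (2) note that for $y\in Pz$ one has $y=zy=yz$, hence $\langle x\xi y,\xi\rangle=\langle x(\xi z)y,\xi z\rangle$ for every $\xi\in L^{2}(M)$, so the $M$-$Pz$ coefficients of an arbitrary vector coincide with those of its compression $\xi z\in\overline{M\widehat{z}}$, which is covered by the closure argument (sums, scalars, $M$-$Pz$ action, norm limits) applied to $\widehat{z}$. With either fix your proof is complete.
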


\begin{proof}
    Without loss of generality, replace $ \xi_{n} $ with $ p \xi_{n} $ so that we may assume $ \xi_{n}=p \xi_{n} $.

    Consider the functional $ \phi_{n}:M\to \C $ given by $ \phi_{n}(x)=\inp{x \xi_{n}}{\xi_{n}} $. Then note that $ 0\leq \phi_{n}\leq \kappa^{2} \tau $, so by Radon-Nikodym there is some $ T_{n}\in pM_{+}p $ such that 
    \[
        \tau(xT_{n}) = \phi_{n}(x) = \inp{x \xi_{n}}{\xi_{n}} \quad \text{for all }x\in M.
    \]
    After passing to subsequences, we may assume there is some $ T\in pM_{+}p $ such that $ T_{n}\to T $ weakly and $ \tau(T) > 0 $. 

    Suppose $ x,y,a\in M $ and set $ m=xy^{*} $. Then we compute 
    \begin{align*}
        |\inp{[a,T_{n}]\widehat{x}}{\widehat{y}}| &= |\tau(y^{*}aT_{n}x)-\tau(y^{*}T_{n}ax)| \\
        &= |\tau(maT_{n}) - \tau(amT_{n})|\\
        &= |\inp{[m,a] \xi_{n}}{\xi_{n}}| \\
        &\leq |\inp{m[a,\xi_{n}]}{\xi_{n}}| + |\inp{m \xi_{n}a - am \xi_{n}}{\xi_{n}}|\\
        &\leq \norm{m}\norm{[a,\xi_{n}]}\norm{\xi_{n}} +|\inp{m \xi_{n}}{[\xi_{n},a^{*}]}|\\
        &\leq  \norm{m}\norm{[a,\xi_{n}]}\norm{\xi_{n}} + \norm{m}\norm{[\xi_{n},a^{*}]}\norm{\xi_{n}}
    \end{align*} 
    Letting $ J $ denote the modular conjugation operator, we observe that 
    \[
        \norm{[\xi_{n},a^{*}]} = \norm{J[\xi_{n},a^{*}]} = \norm{J(JaJ \xi_{n})-\xi_{n}a} =\norm{[a,\xi_{n}]},
    \]
    whence, we finally obtain 
    \[
        |\inp{[a,T_{n}] \widehat{x}}{\widehat{y}}| \leq 2\norm{m}\norm{\xi_{n}}\norm{[a,\xi_{n}]}.
    \]
    By condition (3) and uniqueness of limits, it follows that $ T\in P^{\prime}\cap pMp $.

    Now choose $ \delta>0 $ small enough such that $ 1_{(\delta, \kappa] }(T^{1/2}) $ is a nonzero projection, and define $ f(t)\coloneqq t^{-1}1_{(\delta, \kappa]}(t) $ for $ t\in \sigma(T^{1/2}) $. Then set $ S\coloneqq f(T^{1/2}) $ and $ q\coloneqq TS^{2} $. 


    By \cite[Part III, Chapter 8.2]{Di81}, there is some nonzero subprojection $ \widetilde{q} $ of $q$ which is fundamental, i.e there exist nonzero, pairwise disjoint projections $ q_{1}, \ldots, q_{n}\in P^{\prime}\cap pMp $ and a central projection $ z\in \mathcal{Z}(P^{\prime}\cap pMp) $ such that
    \begin{itemize}
        \item $ \widetilde{q}=q_{1}\sim q_{2}\sim \cdots\sim q_{n} $
        \item $ \sum_{j=1}^{m} q_{j} = z $.
    \end{itemize}
    Then, as $ \E_{\mathcal{Z}(P^{\prime}\cap pMp)}(\cdot) $ is a center-valued trace, we have that
    \begin{align*}
        z = \E_{\mathcal{Z}(P^{\prime}\cap pMp)}(z)= \E_{\mathcal{Z}(P^{\prime}\cap pMp)}\lr{\sum_{i=1}^{m} q_{i} } = m\cdot\E_{\mathcal{Z}(P^{\prime}\cap pMp)}(\widetilde{q}) 
    \end{align*}
    Choose partial isometries $ v_{1},\ldots, v_{m}\in P^{\prime}\cap pMp $ such that $ v_{i}^{*}v_{i}=\widetilde{q} $ and $ v_{i}v_{i}^{*} = q_{i} $ for all $ 1\leq i \leq m $. As we are now working under $ \widetilde{q} $, we adujust the operator $ S $ via $ \widetilde{S}\coloneqq S \widetilde{q} $, so $ \widetilde{S}^{2}T = \widetilde{q} $.
    For $ x,a \in M $ and $ y\in Pz $, observe that
    \begin{align*}
        \inp{x \widehat{a} y}{\widehat{a}} &= \tau(a^{*}xay) = \tau(a^{*}xayz) =\sum_{i=1}^{m} \tau(a^{*}xayv_{i}v_{i}^{*})\\
        &=\sum_{i=1}^{m} \tau(a^{*}xav_{i}yv_{i}^{*})=\sum_{i=1}^{m} \tau(a^{*}xayv_{i}v_{i}^{*}v_{i}v_{i}^{*})=\sum_{i=1}^{m} \tau(a^{*}xayv_{i}\widetilde{S}T \widetilde{S}v_{i}^{*})\\
        &= \lim_{n\to\infty} \sum_{i=1}^{m}\tau(\widetilde{S}v_{i}^{*}a^{*}xayv_{i} \widetilde{S}T_{n})\\
        &=\lim_{n\to\infty}\sum_{i=1}^{m} \inp{\widetilde{S}v_{i}^{*}a^{*}xay v_{i}\widetilde{S} \xi_{n}}{\xi_{n}} =\lim_{n\to\infty}\sum_{i=1}^{m} \inp{xa v_{i}\widetilde{S} \xi_{n} y}{av_{i}\widetilde{S}\xi_{n}}. 
    \end{align*}
    To show weak containment, suppose $ F_{1}\sub M $, $ F_{2}\sub Pz $ are finite subsets, $ \eps>0 $, and $ a\in M $. Set $ \eta_{n}^{i}\coloneqq av_{i}\widetilde{S} \xi_{n}\in \H $ for all $ n\in \N $ and $ 1\leq i \leq m $. Since these subsets are finite, we may choose $ k\in \N $ such that $  $
    \[
        \left| \inp{x \widehat{a} y}{\widehat{a}} - \sum_{i=1}^{m}\inp{x \eta_{k}^{i}y}{\eta_{k}^i}\right| < \eps\quad \text{for all }x\in F_{1},\,y\in F_{2}.
    \]
    By density of $ M $ inside $ L^{2}(M) $, this shows that we can in fact approximate any vector state in $ L^{2}(M) $ by vector states from $ \H $, whence we conclude that $\prescript{}{M}{L^{2}(M)}_{Pq}\prec\prescript{}{M}{\H}_{Pq}$.
\end{proof}
The preceding lemma presents a certain relative amenability condition for direct summands. In the opposite case of strong nonamenability, one can use the contrapositive of the previous lemma to find unitaries in $ P $ for which commutation with these unitaries detects closeness to $ M $. 

We will now put everything together to prove the main result of this section.

\begin{thm}\label{full}
Let $\rG$ be a strongly ergodic discrete measured groupoid which is nonamenable and admits a strongly unbounded $1$-cocycle into a mixing orthogonal representation weakly contained in the regular representation. If $L(\rG)$ is a factor, then it is a full factor. 
\end{thm}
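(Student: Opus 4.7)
The plan is to argue by contradiction: assume $M = L(\rG)$ is not full, so there exists a bounded sequence $(\psi_{n})_{n=1}^{\infty} \subset (M)_{1}$ with $\tau(\psi_{n}) = 0$, $\norm{\psi_{n}}_{2} \geq \varepsilon_{0} > 0$, and $\norm{[\psi_{n}, y]}_{2} \to 0$ for every $y \in M$. The crucial observation is that $A = L^{\infty}(X)$ is pointwise fixed by the Gaussian deformation $\alpha_{t}$: for $a \in A$, writing $a = a u_{X}$ for $X$ the identity bisection, we have $\alpha_{t}(a) = f_{c_{t}, X} \cdot a$ with $f_{c_{t}, X}(x, r) = \omega_{x}(tb(x))(r) = 1$, since the cocycle identity forces $b(x) = 0$ on units. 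Setting $\xi_{n}(t) := \delta_{t}(\psi_{n}) \in \H := L^{2}(\widetilde{M}) \ominus L^{2}(M)$, Popa's transversality inequality (Lemma \ref{transv}) then yields
\[
    \norm{[a, \xi_{n}(t)]}_{2} \leq 2 \norm{\psi_{n}} \norm{\alpha_{t}(a) - a}_{2} + \norm{[a, \psi_{n}]}_{2} = \norm{[a, \psi_{n}]}_{2} \xrightarrow{n \to \infty} 0 \quad \text{for every } a \in A.
\]

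The main technical step is to prove the uniform rigidity $\lim_{t \to 0} \sup_{n} \norm{\alpha_{t}(\psi_{n}) - \psi_{n}}_{2} = 0$. If this fails, by transversality there exist $\varepsilon > 0$, $t_{k} \to 0$, and indices $n_{k}$ with $\norm{\xi_{k}}_{2} \geq \varepsilon / 4$ for $\xi_{k} := \delta_{t_{k}/2}(\psi_{n_{k}})$. Apply Lemma \ref{standard} with $P = A$, $p = 1$, and the above $\H$: the three hypotheses follow respectively from the display above, from $\norm{\xi_{k}}_{\infty} \leq 2$ (so $\kappa = 2$ works), and from our standing assumption. This produces a nonzero $z \in \mathcal{Z}(A' \cap M)$ with $_{M}L^{2}(M)_{Az} \prec {}_{M}\H_{Az}$. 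Combined with $_{M}\H_{M} \prec {}_{M}L^{2}(M) \otimes L^{2}(M)_{M}$ from Lemma \ref{lem:primelem}(ii), we conclude $_{M}L^{2}(M)_{Az} \prec {}_{M}L^{2}(M) \otimes L^{2}(M)_{Az}$. Since $Az$ is abelian hence amenable, this implies that a nontrivial corner of $M$ is amenable; by Morita equivalence inside the factor $L(\rG)$, $M$ itself is amenable, contradicting the nonamenability of $\rG$.

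Once uniform rigidity is established, the cp map $\phi_{t} := \E_{M} \circ \alpha_{t}$, which acts on Fourier modes as $\phi_{t}(a u_{\sigma}) = e^{-t^{2} \norm{b(\cdot \sigma)}^{2}} a u_{\sigma}$, satisfies $\sup_{n} \norm{\phi_{t}(\psi_{n}) - \psi_{n}}_{2} \to 0$ as $t \to 0$. Expanding this in Fourier coordinates and invoking the strong unboundedness of $b$, one controls the $L^{2}$-mass of $\psi_{n}$ on ``large-cocycle'' bisections, which must be uniformly small; then the mixing of $\H$ over $A$ (Lemma \ref{lem:primelem}(i)) together with the weak mixing of the Gaussian Koopman representation (Remark \ref{Gaussmix}) forces the remaining, ``small-cocycle'' part to concentrate in $A$, giving $\norm{\psi_{n} - \E_{A}(\psi_{n})}_{2} \to 0$. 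The asymptotic centrality of $(\psi_{n})$ in $M$ specializes to asymptotic $[\rG]$-invariance of $\E_{A}(\psi_{n}) \in A$, so the strong ergodicity of $\rG$ implies $\norm{\E_{A}(\psi_{n}) - \tau(\psi_{n})}_{2} \to 0$; hence $\norm{\psi_{n}}_{2} \to 0$, contradicting $\norm{\psi_{n}}_{2} \geq \varepsilon_{0}$.

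The main obstacle lies in this last step: extracting $L^{2}$-concentration of $(\psi_{n})$ in $A$ from uniform $\phi_{t}$-rigidity. In the equivalence-relation setting treated by Hoff~\cite{Ho16}, Feldman--Moore-type generators give direct access to an $A$-side resolution of central sequences; for general discrete measured groupoids this structural tool is unavailable, so one must coordinate the strong unboundedness of $b$ with the relative mixing and weak-mixing properties of the Gaussian bimodule to fill the gap.
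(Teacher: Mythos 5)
There is a genuine gap, and it sits exactly where you try to establish the ``uniform rigidity'' of $\alpha_t$ on the central sequence. You apply Lemma \ref{standard} with $P=A$ and conclude ${}_M L^2(M)_{Az} \prec {}_M \H_{Az}$, then combine with ${}_M\H_M \prec {}_M L^2(M)\otimes L^2(M)_M$ to get ${}_M L^2(M)_{Az} \prec {}_M L^2(M)\otimes L^2(M)_{Az}$ and deduce amenability of a corner of $M$. This last deduction is false: weak containment of the trivial bimodule in the coarse bimodule \emph{as $M$-$A$ bimodules} carries no information, because $A$ is abelian, hence nuclear, so the maximal and minimal norms on $M\odot A^{\mathrm{op}}$ coincide and \emph{every} $M$-$A$ bimodule is weakly contained in the coarse $M$-$A$ bimodule. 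Amenability of $M$ requires the containment as $M$-$M$ bimodules, i.e.\ Lemma \ref{standard} must be applied with $P=M$, which in turn requires vectors in $L^2(\widetilde M)\ominus L^2(M)$ that asymptotically commute with \emph{all} of $M$, not just with $A$. This is why the paper deforms the centralizing sequence $u_n$ itself (taking $\xi_k=\delta_{s_k}(u_{n_k})$), and why it must first prove $\norm{\E_A(xu_ny)}_2\to 0$ for all $x,y\in M$ in order to use the relative mixing of $\H$ over $A$ to kill the terms $\langle u_n\delta_t(x),\delta_t(x)u_n\rangle$. That step is the heart of the proof: it uses strong ergodicity to get $\norm{\E_A(u_n)}_2\to 0$, a Lusin--Novikov section of $q:\rG\to\mathcal R_\rG$, Feldman--Moore involutions of $\mathcal R_\rG$, and Lemma \ref{ergodicth} to upgrade this to $\norm{\E_A(u_nu_\sigma^*)}_2\to 0$ for a generating family, hence to $\norm{\E_A(xu_ny)}_2\to 0$. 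Your proposal never produces any statement of this kind, so the relative mixing of Lemma \ref{lem:primelem}(i) is never actually brought to bear.

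The second half of your plan is also not salvageable as stated, as you partly acknowledge. Uniform rigidity of $\alpha_t$ on a sequence does \emph{not} force concentration in $A$: the fixed points of the deformation include $L(\mathcal S)$ for $\mathcal S=\{g:b(g)=0\}$, and more generally rigidity only localizes inside the maximal rigid subalgebra, which by Proposition \ref{maxrig1} and Theorem \ref{maxrig2} is of the form $L(\mathcal S')$ for a subgroupoid on which $b$ is bounded --- typically far larger than $A$. So even if uniform rigidity of $(\psi_n)$ were established, strong ergodicity of $\rG$ (which only controls asymptotically invariant elements of $A$) could not close the argument; strong unboundedness of $b$ rules out rigidity on all of $(M)_1$, but says nothing about a single central sequence. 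The paper's route avoids this entirely: non-uniformity of $\alpha_t$ on $(M)_1$ (Lemma \ref{lem:primelem}) plus transversality transfers a lower bound $\norm{\delta_{s_k}(u_{n_k})}_2\geq\varepsilon/4$ to the central sequence, Lemma \ref{standard} with $P=M$ and factoriality give ${}_ML^2(M)_M\prec{}_M L^2(\widetilde M)\ominus L^2(M)_M\prec{}_ML^2(M)\otimes L^2(M)_M$, and nonamenability of $\rG$ yields the contradiction. You would need to restructure your argument along these lines rather than through uniform rigidity of the central sequence.
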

\begin{proof}
Let $M,\widetilde{M}$ and $\{\alpha_t\}_{t\in\mathbb R} \subset \Aut(\tilde M)$ as constructed in Section \ref{gaussianc} and suppose that $M$ has property Gamma. Then there is a sequence $\{u_n\} \in \U(M)$ with $\tau(u_n) = 0$ for all $n$ and $$\lim_{n \to \infty}\norm{u_nx-xu_n}_2 =0,$$ for each $x \in M$. 
Then for any $u \in \mathcal N_M(A)$ we have 
\begin{align*}
\norm{u\E_A(u_n)u^*-\E_A(u_n)}_2 =\norm{\E_A(uu_nu^*)-\E_A(u_n)}_2\leq\norm{uu_nu^* - u_n}_2,
\end{align*}
which goes to $0$ as $n\to\infty$. Since the sequence $\E_A(u_n)$ is bounded in norm and $M=\mathcal{N}_M(A)''$, it follows that $\norm{x \E_A(u_n) - \E_A(u_n)x}_2 \to 0$ for each $x \in M$. Since $\rG$ is strongly ergodic, it follows that $\norm{\E_A(u_n)}_2=\norm{\E_A(u_n)-\tau(\E_A(u_n))1}_2 \to 0$ as $n \to \infty$.

Consider the quotient map 
$$
q:\rG\to\mathcal{R}_\rG,\quad\text{given by}\quad q(g)=\big(\r(g),\d(g)\big).
$$
Noting that $q$ is countable-to-one, by Lusin-Novikov \cite[Theorem 18.10]{Ke95} there is a Borel right inverse to $q$, we denote it by $f:\mathcal{R}_\rG\to\rG$. By replacing $f$ with $f'(x,y):=f(x,y)f(y,y)^{-1}$ if necessary, we can assume that $f(x,x)=x$, for all $x\in X$. Note that $\si=f(\widetilde\si)\in [\rG]$ as soon as $\widetilde\si\in [\mathcal{R}_\rG]$ and therefore we can write every element in $[\rG]$ as a multiplication $\si_1\si_2$, with $\si_1\in [{\rm Iso}(\rG)]$ and $\si_2=f(\widetilde{\si_2})\in f\big([\mathcal{R}_\rG]\big)$.

Hence the full group $[\rG]$ fits into a short exact sequence (of groups) in the following manner 
$$
1\to[{\rm Iso}(\rG)]\to [\rG]\xrightarrow{q} [\mathcal{R}_\rG]\to 1,
$$ 
where the map $q:[\rG]\to[\mathcal{R}_\rG]$ is induced by the original quotient map.

Now fix any $\widetilde{\si}\in [\mathcal{R}_\rG]$ with ${\widetilde{\si}}^2=X$ and let $\si=f(\widetilde\si)$. Note that $z_\si = \E_A(u_\si)$ is a projection given by $z_\si=z_{\si^{-1}}=1_Y$, where $Y={\{x\in X:{\si}x {\si}^{-1}=x\}}={\{x\in X:\widetilde{\si}x \widetilde{\si}^{-1}=x\}}={\rm Fix}_g$, where $g(x)={\si}x {\si}^{-1}$, for all $x\in X$. We now observe that 
$$
u_\si^*z_\si = \E_A(u_\si^*z_\si) = \E_A(u_\si^*)z_\si = z_\si.
$$
Hence 
\begin{align}\label{ineq1}
\norm{\E_A(u_nu_\si^*)z_\si}_2=\norm{\E_A(u_nu_\si^*z_\si)}_2=\norm{\E_A(u_n)z_\si}_2\leq \norm{\E_A(u_n)}_2 \xrightarrow{n \to \infty} 0
\end{align}
Moreover, since $1-z_\si=1_{Y^c}$ and $Y={\rm Fix}_g$, lemma \ref{ergodicth} allows us to find a projection $z\in A$ such that $1-z_\si=z + u_\si zu_\si^*$ and $z \perp u_\si zu_\si^*$. It now follows that
\begin{align}\label{ineq2}
\norm{\E_A(u_nu_\si^*)(1-z_\si)}^2_2&=\norm{\E_A(u_nu_\si^*)(z + u_\si zu_\si^*)}_2^2 \notag\\
&=\norm{\E_A(u_nu_\si^*)z}_2^2 + \norm{\E_A(u_nu_\si^*)u_\si zu_\si^*}_2^2  \notag\\
&= \norm{\E_A(u_nu_\si^*)(z-u_\si zu_\si^*)}_2^2
= \norm{z\E_A(u_nu_\si^*)-\E_A(u_nu_\si^*u_\si zu_\si^*)}_2^2 \notag\\
&= \norm{\E_A(zu_nu_\si^*-u_nzu_\si^*)}_2^2
\leq\norm{zu_n-u_nz}_2^2 \xrightarrow{n \to \infty} 0. 
\end{align} 

Combining \eqref{ineq1} and \eqref{ineq2} we see that $\norm{\E_A(u_nu_\si^*)}_2 \to 0$ as $n \to \infty$ for each $\si=f(\widetilde\si)$, where $\widetilde{\si}\in [\mathcal{R}_\rG]$ with ${\widetilde{\si}}^2=X$. By Feldman and Moore \cite{FM1:77}, the involutions $\widetilde{\si}$ from before cover $\mathcal{R}_\rG$ and so $\vN{au_{\si_1}u_{f(\widetilde{\si_2})} : a \in A, \si_1\in [{\rm Iso}(\rG)],\si_2 \in [\mathcal{R}_\rG], \widetilde{\si_2}^2 = X}$ contains all the unitaries $u_\si$ coming from $\si \in f\big([\mathcal{R}_\rG]\big)$. In consequence, it has to coincide with $L(\rG)$ and we obtain that $\norm{\E_A(xu_ny)}_2 \to 0$ as $n \to \infty$ for any pair $x, y \in M$.

Recall that, in the setting of Lemma \ref{transv}, we defined $\delta_t(x) = \alpha_t(x) - \E_M(\alpha_t(x))$. By Lemma \ref{lem:primelem}, we know that ${}_ML^2(\tilde M) \ominus L^2(M)_M$ is mixing relative to $A$, so this implies that 
\[
\<u_n\delta_t(x), \delta_t(x)u_n\> \to 0, \text{ as } n \to \infty,
\] for every $x \in M$. From Lemma \ref{lem:primelem}, we obtain that the convergence $\alpha_t \to {\rm id}$ is not uniform on $(M)_1$, and hence by part (i) of Lemma \ref{transv} there is an $\varepsilon > 0$ and sequences $\{x_k\} \subset (M)_1$, $\{t_k\} \subset \mathbb R$, $t_k \to 0$, such that $\norm{\delta_{t_k}(x_k)}_2 \geq \varepsilon$ for all $k$. Then, using Popa's transversality inequality again, we get that for any $k$,
\begin{align*}
\varepsilon^2 &\leq \norm{\delta_{t_k}(x_k)}_2^2 
= \liminf_{n \to \infty} \left[\frac{1}{2}\norm{[u_n, \delta_{t_k}(x_k)]}_2^2 + \text{Re}\<u_n\delta_{t_k}(x_k), \delta_{t_k}(x_k)u_n\> \right] \\
& \le \frac{1}{2}\liminf_{n \to \infty} \left[2\norm{\alpha_{t_k}(u_n) - u_n}_2 + \norm{[u_n, x_k]}_2\right]^2
\le 8\liminf_{n \to \infty} \norm{\delta_{t_k/2}(u_n)}_2^2
\end{align*}
Thus setting $s_k = \frac{t_k}{2}$ for each $k$ we can find $n_k \geq k$ such that $\norm{\delta_{s_k}(u_{n_k})}_2 \geq \frac{\varepsilon}{4}$. Finally, we see that, for any $x \in M$,
\[
\norm{[\delta_{s_k}(u_{n_k}), x]}_2 \leq 2\norm{\alpha_{s_k}(x) - x}_2 + \norm{[u_{n_k}, x]}_2 \to 0 \text{ as } k \to \infty.
\]
Since we also have $\norm{x\delta_{t_k/2}(u_{n_k})}_2 \leq 2\norm{x}_2$ for all $k$, we apply Lemma \ref{standard} to find that ${}_ML^2(M)_M \prec {}_ML^2(\tilde M) \ominus L^2(M)_M$. On the other hand, Lemma \ref{lem:primelem} shows that ${}_ML^2(\tilde M) \ominus L^2(M)_M \prec {}_ML^2(M) \otimes L^2(M)_M$. This implies that $M$ is amenable, which is a contradiction. 
\end{proof}

The following corollary is now immediate from the Theorem and previous considerations.
\begin{cor}
    Let $G$ be a countable group which admits an unbounded $1$-cocycle into a mixing orthogonal representation weakly contained in the regular representation and let $G\curvearrowright (X,\mu)$ a strongly ergodic, nonamenable probability measure preserving action such that $L^\infty(X,\mu)\rtimes G$ is a factor (immediate if $G$ is i.c.c. or if the action is free). Then $L^\infty(X,\mu)\rtimes G$ is a full factor.
\end{cor}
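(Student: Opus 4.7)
The plan is to reduce this directly to Theorem \ref{full} applied to the transformation groupoid $\rG = X \rtimes_\theta G$. Recall that $L(\rG) \cong L^\infty(X,\mu) \rtimes G$, so the factor assumption transfers immediately, and the hypothesis on $G$ being i.c.c. or the action being free forces $L(\rG)$ to be a factor via standard considerations. Thus it suffices to verify that $\rG$ satisfies the four hypotheses of Theorem \ref{full}: (a) strong ergodicity, (b) nonamenability, (c) existence of a strongly unbounded $1$-cocycle into a mixing orthogonal representation weakly contained in the regular representation, and (d) factoriality of $L(\rG)$.

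First, I would handle hypothesis (c) by invoking Example \ref{exstrongunbound} directly. Given a group representation $\pi : G \to \mathcal{O}(\mathcal{H})$ that is mixing and weakly contained in the left regular representation of $G$, and an unbounded $1$-cocycle $b : G \to \mathcal{H}$, the example produces the induced groupoid representation $\pi_\rG$ on the trivial bundle $X\ast\mathcal{K}$ with $\mathcal{K}_x = \mathcal{H}$, the induced $1$-cocycle $b_\rG(g,x) = b(g)$, and verifies that mixing, weak containment in $\lambda_\rG$, and strong unboundedness (not merely unboundedness, by the strengthening in the example) all pass from $\pi$ and $b$ to $\pi_\rG$ and $b_\rG$. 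Hypothesis (b) is the definition of nonamenability of the action via Definition \ref{zimmer}.

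Next, I would verify hypothesis (a): strong ergodicity of $\rG = X\rtimes_\theta G$ reduces to strong ergodicity of the action $G \curvearrowright (X,\mu)$. Indeed, every $\sigma \in [\rG]$ is, up to a measurable partition of $X$, a concatenation of the partial transformations $x \mapsto g\cdot_\theta x$ for $g \in G$, because $\rG$ has the form $G \times X$ with range/domain maps given by the action. Hence a sequence of measurable sets $(Y_n)_n \subset X$ satisfying $\mu(\sigma Y_n \sigma^{-1} \triangle Y_n) \to 0$ for every $\sigma \in [\rG]$ will in particular satisfy $\mu(g \cdot Y_n \triangle Y_n) \to 0$ for every $g \in G$. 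Strong ergodicity of the action then forces $\mu(Y_n)(1-\mu(Y_n)) \to 0$, which is exactly strong ergodicity of $\rG$.

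Having established (a)--(d), Theorem \ref{full} applies and yields that $L(\rG) = L^\infty(X,\mu) \rtimes G$ is a full factor. The only minor point of potential friction is the reduction in the previous paragraph, where one should carefully justify that testing the strong ergodicity condition on pseudogroup elements of the form $x \mapsto g\cdot_\theta x$ suffices; however, this is routine since these elements generate $[\rG]$ as an inverse semigroup and the symmetric difference condition is subadditive under concatenation and passage to countable partitions. No new deformation/rigidity argument is required.
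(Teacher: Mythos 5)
Your proposal is correct and follows essentially the same route as the paper, which treats this corollary as immediate from Theorem \ref{full} applied to the transformation groupoid $X\rtimes_\theta G$ together with the observations of Example \ref{exstrongunbound} and Definition \ref{zimmer}; your explicit check that asymptotic invariance under all of $[\rG]$ implies asymptotic invariance under the group elements (so strong ergodicity of the action yields strong ergodicity of the groupoid) is exactly the easy direction needed, and the worry about generating $[\rG]$ is superfluous for that implication.
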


\section{Unique prime factorization}\label{UPF}

Let us recall the following Theorem due to Hoff \cite[Theorem 6.4]{Ho16}, which he used to obtain a result about product rigidity for equivalence relations. It will do the heavy lifting for our result on unique prime factorizations.

\begin{thm} \label{uniquefactor}
Let $M_1, \dots, M_k$ be $\rm{II}_1$ full factors, each with an $s$-malleable deformation $\{\alpha^i_t\}_{t\in\mathbb R} \subset {\rm Aut} (\tilde M_i)$ for some tracial von Neumann algebras $\tilde M_i \supset M_i$. Suppose that for each $i$, the $M_i$-$M_i$ bimodule $_{M_i}L^2(\tilde M_i) \ominus L^2(M_i)_{M_i}$ is weakly contained in the coarse $M_i$-$M_i$ bimodule and mixing relative to some abelian subalgebra $A_i \subset M_i$.  Assume that the convergence $\alpha^i_t \to {\rm id}$ is not uniform in $\norm{\cdot}_2$ on $(M_i)_1$ for any $i$. Then $M_i$ is prime for each $i$, and \begin{enumerate}
\item[(i)] If $M = M_1 \cls{\otimes} M_2 \cls{\otimes} \dots \cls{\otimes} M_k = N \cls{\otimes} Q$ for tracial factors $N, Q$, there must be a partition $I_N \cup I_Q = \{1, \dots, k\}$ and $t > 0$ such that  
$N^t = \bigotimes_{i \in I_N} M_i$ and $Q^{1/t} = \bigotimes_{i \in I_Q} M_i$ modulo unitary conjugacy in $M$.
\item[(ii)] If $M = M_1 \cls{\otimes} M_2 \cls{\otimes} \dots \cls{\otimes} M_k = P_1 \cls{\otimes} P_2 \cls{\otimes} \cdots \cls{\otimes} P_m$ for $\rm{II}_1$ factors $P_1, \dots, P_m$ and $m \geq k$, then $m = k$, each $P_i$ is prime, and there are $t_1, \dots, t_k > 0$ with $t_1t_2\cdots t_k = 1$ such that after reordering indices and conjugating by a unitary in $M$ we have $M_i = P_i^{t_i}$ for all $i$.
\item[(iii)] In {(ii)}, the assumption $m \geq k$ can be omitted if each $P_i$ is assumed to be prime.
\end{enumerate}
\end{thm}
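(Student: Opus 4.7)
My plan is to combine the given deformations into a tensor-product deformation on $\widetilde M := \widetilde{M_1} \cls{\otimes} \cdots \cls{\otimes} \widetilde{M_k}$ and then probe any alternative tensor decomposition of $M := M_1 \cls{\otimes} \cdots \cls{\otimes} M_k$ with the partial deformations
\[
\widetilde{\alpha}^{(i)}_t := \mathrm{id}\otimes \cdots\otimes \alpha^i_t \otimes\cdots\otimes \mathrm{id},\qquad i = 1,\ldots,k,
\]
each regarded as an $s$-malleable deformation of $M$ inside $\widetilde{M_i}\cls{\otimes}\bigotimes_{j\neq i}M_j$. Primeness of each individual $M_i$ is an immediate application of Popa's spectral gap argument (Theorem \ref{popaspectralgap}) to $\alpha^i_t$: the hypotheses on the bimodule together with the non-uniformity of $\alpha^i_t$ on $(M_i)_1$ forbid the rigid central summand from being all of $M_i$, so the complementary prime summand is everything.

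For part (i), fix a decomposition $M = N \cls{\otimes} Q$ with $N,Q$ tracial factors of type II and fix $i$. The orthocomplement bimodule for $\widetilde{\alpha}^{(i)}_t$ decomposes as
\[
{}_{M}L^2\bigl(\widetilde{M_i}\cls{\otimes}{\textstyle\bigotimes_{j\neq i}M_j}\bigr)\ominus L^2(M)_{M} \;\cong\; \bigl[{}_{M_i}L^2(\widetilde{M_i})\ominus L^2(M_i)_{M_i}\bigr]\otimes L^2\bigl({\textstyle\bigotimes_{j\neq i}M_j}\bigr),
\]
and so inherits weak containment in the coarse $M$-$M$ bimodule and is mixing relative to $B_i := A_i \cls{\otimes}\bigotimes_{j\neq i}M_j$. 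A relative spectral-gap argument, in the spirit of Theorem \ref{popaspectralgap} but with the (non-abelian) target $B_i$ replacing the abelian subalgebra, then yields a dichotomy: either $\widetilde{\alpha}^{(i)}_t$ converges uniformly on $(M)_1$, or one of the commuting factors $N,Q$ intertwines into $B_i$ in Popa's sense, and hence into $\bigotimes_{j\neq i}M_j$ (using the mixingness relative to $A_i$ to discard the abelian factor). The first alternative is ruled out since $\widetilde{\alpha}^{(i)}_t$ restricted to $M_i\subset M$ coincides with $\alpha^i_t$, where the convergence is not uniform. Hence for every $i$ we obtain $N\prec_M \bigotimes_{j\neq i}M_j$ or $Q\prec_M \bigotimes_{j\neq i}M_j$, and since $N,Q$ commute, are factors, and jointly generate $M$, the sets
\[
I_N := \{i : N\not\prec_M {\textstyle\bigotimes_{j\neq i}M_j}\},\qquad I_Q := \{i : Q\not\prec_M {\textstyle\bigotimes_{j\neq i}M_j}\}
\]
must partition $\{1,\ldots,k\}$. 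The identities $N^t = \bigotimes_{i\in I_N}M_i$ and $Q^{1/t} = \bigotimes_{i\in I_Q}M_i$ modulo unitary conjugacy then follow from a standard intertwining-to-unitary-conjugation upgrade as in \cite{IoPoVa10}, with the amplification $t$ determined by the trace of the partial isometry implementing the intertwinings.

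Parts (ii) and (iii) follow by iterating (i). Given $M = P_1\cls{\otimes}\cdots\cls{\otimes} P_m$, apply (i) to each factorization $M = P_j \cls{\otimes} \bigotimes_{l\neq j}P_l$ to produce a nonempty subset $J_j\subset\{1,\ldots,k\}$ and $t_j>0$ with $P_j^{t_j} = \bigotimes_{i\in J_j}M_i$ modulo unitary conjugacy. Since the $P_j$'s pairwise commute and jointly generate $M$, the sets $\{J_j\}_{j=1}^m$ must be pairwise disjoint and cover $\{1,\ldots,k\}$, giving $m\leq k$. Combined with the hypothesis $m\geq k$ this forces $m=k$, each $|J_j|=1$, and (ii) follows after permuting indices (primeness of each $P_j$ is then automatic as it is unitarily conjugate to an amplification of the prime factor $M_{\sigma(j)}$). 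For (iii), primeness of each $P_j$ forces $|J_j|=1$ directly, since $|J_j|\geq 2$ would produce a nontrivial tensor-product decomposition of $P_j$, and the same partition argument yields $m=k$ without the inequality $m\geq k$.

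The step I expect to be most delicate is the relative spectral-gap alternative in (i): extracting a concrete intertwining $N\prec_M \bigotimes_{j\neq i}M_j$ (or the analogous one for $Q$) from the mere non-uniformity of $\widetilde{\alpha}^{(i)}_t$ on $(M)_1$. The argument I envision adapts Popa's transversality inequality (Lemma \ref{transv}) and applies Lemma \ref{standard} to a bimodule manufactured from the partial deformation, using the mixingness of the orthocomplement bimodule relative to $A_i$ in an essential way to exclude the scenario where neither $N$ nor $Q$ intertwines into $\bigotimes_{j\neq i}M_j$. Once this technical heart is in place, the partition combinatorics and amplification bookkeeping in (ii) and (iii) reduce to routine applications of the intertwining dictionary.
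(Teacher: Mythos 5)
The paper does not actually prove this statement: it is imported verbatim from Hoff \cite[Theorem 6.4]{Ho16} and used as a black box (Section \ref{UPF} consists of the citation and its application), so the comparison is really with Hoff's argument. Your outline (partial deformations $\alpha^i_t\otimes\mathrm{id}$, a spectral-gap alternative, then intertwining combinatorics) is the expected strategy, but the step you yourself flag as delicate is where the entire proof lives, and as stated it does not close. The relative version of Theorem \ref{popaspectralgap} does not give the dichotomy ``uniform convergence on $(M)_1$ or $N$ (resp.\ $Q$) $\prec_M B_i$''. What the spectral-gap machinery yields, once the coarse containment is only relative to $M_{\hat i}:=\bigotimes_{j\neq i}M_j$, is an alternative whose branches are (a) relative amenability of one of the commuting algebras over $M_{\hat i}$, or (b) uniform convergence of $\widetilde{\alpha}^{(i)}$ on the unit ball of the other one, with the mixing-relative-to-$A_i$ hypothesis entering only when the relevant algebra does not intertwine into $A_i\cls{\otimes}M_{\hat i}$. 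Neither branch is, by itself, the intertwining $N\prec_M M_{\hat i}$ or $Q\prec_M M_{\hat i}$ you need: the relative-amenability branch is simply ignored in your sketch, and rigidity (uniform convergence) of $N$ under $\widetilde{\alpha}^{(i)}$ does not abstractly imply $N\prec_M M_{\hat i}$, because nothing in the hypotheses identifies the rigid subalgebras of an abstract $s$-malleable deformation (compare Section \ref{maxrigsec}, where pinning down rigid subalgebras of the concrete Gaussian deformation requires weak mixing assumptions and genuine work). The auxiliary step of ``discarding the abelian factor'' to pass from $\prec_M A_i\cls{\otimes}M_{\hat i}$ to $\prec_M M_{\hat i}$ also needs an argument, as does the disjointness of $I_N$ and $I_Q$ (one must rule out that $N$ and $Q$ both embed into the same $M_{\hat i}$).

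A telling symptom is that fullness of the $M_i$ is never used anywhere in your proposal, even though it is a hypothesis of the theorem and is exactly what this paper works to verify (Section \ref{fullness}) ``precisely because it is important in obtaining the result about unique prime factorization''. In Hoff's proof fullness is an essential input (through central-sequence/spectral-gap characterizations of full factors, which are what locate commuting subalgebras with respect to the tensor factors and drive the untwisting into $N^t=\bigotimes_{i\in I_N}M_i$, $Q^{1/t}=\bigotimes_{i\in I_Q}M_i$ with a single amplification parameter); a proof that never touches it is almost certainly missing that ingredient rather than circumventing it. The iteration producing (ii) and (iii) from (i) is fine in spirit, but since the core of (i) is not established, the whole argument remains a plausible roadmap rather than a proof.
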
 

In fact, the above-mentioned theorem can be applied directly in our setting. Its application in conjunction with Theorem \ref{prime} and Theorem \ref{full} yields the following result.

\begin{cor}[Theorem \ref{mainth0}]\label{mainth1}
For $i \in \{1, 2, \ldots, k\}$, let $\rG_i$ be a nonamenable strongly ergodic discrete measured groupoid which admits a strongly unbounded $1$-cocycle into a mixing orthogonal representation weakly contained in the regular representation. Further assume that $L(\rG_i)$ is a factor. Then $M=L(\rG_1) \cls{\otimes} L(\rG_2) \cls{\otimes} \dots \cls{\otimes} L(\rG_k) $ satisfies the following.
\begin{enumerate}
    \item[(i)] If $M = N \cls{\otimes} Q$ for tracial factors $N, Q$, there must be a partition $I_N \cup I_Q = \{1, \dots, k\}$ and $t > 0$ such that $N^t = \bigotimes_{i \in I_N} L(\rG_i)$ and $Q^{1/t} = \bigotimes_{i \in I_Q} L(\rG_i)$ modulo unitary conjugacy in $M$.
    \item[(ii)] If $M = P_1 \cls{\otimes} P_2 \cls{\otimes} \cdots \cls{\otimes} P_m$ for $\rm{II}_1$ factors $P_1, \dots, P_m$ and $m \geq k$, then $m = k$, each $P_i$ is prime, and there are $t_1, \dots, t_k > 0$ with $t_1t_2\cdots t_k = 1$ such that after reordering indices and conjugating by a unitary in $M$ we have $L(\rG_i) = P_i^{t_i}$ for all $i$. 
    \item[(iii)] In (ii), the assumption $m \geq k$ can be omitted if each $P_i$ is assumed to be prime.
\end{enumerate}\end{cor}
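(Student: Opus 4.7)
The plan is to apply Hoff's unique prime factorization theorem (Theorem \ref{uniquefactor}) directly to the family $\{L(\rG_i)\}_{i=1}^k$, using the Gaussian $s$-malleable deformations constructed in Section \ref{gaussianc}. To this end, for each $i$ I would first fix a strongly unbounded $1$-cocycle $b_i$ for $\rG_i$ into a mixing orthogonal representation $\pi_i$ weakly contained in the regular representation, form the Gaussian extension $\widetilde{\rG_i}$, set $\widetilde{M_i} = L(\widetilde{\rG_i}) \supset L(\rG_i) = M_i$, and let $(\alpha_t^i, \beta^i)$ be the associated $s$-malleable deformation. Set $A_i = L^\infty(X_i,\mu_i) \subset M_i$.

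Next I would verify each hypothesis of Theorem \ref{uniquefactor} one by one. That $M_i$ is a $\mathrm{II}_1$ factor follows from the standing assumption plus the existence of the faithful trace $\tau_{L(\rG_i)}$; that $M_i$ is \emph{full} is precisely the conclusion of Theorem \ref{full}, whose hypotheses (nonamenability, strong ergodicity, a strongly unbounded cocycle into a mixing orthogonal representation weakly contained in $\lambda_{\rG_i}$, and factoriality) are exactly what we have assumed. Items (i) and (ii) of Lemma \ref{lem:primelem} give that $_{M_i}L^2(\widetilde{M_i}) \ominus L^2(M_i)_{M_i}$ is both mixing relative to $A_i$ and weakly contained in the coarse $M_i$-$M_i$ bimodule, while the final clause of Lemma \ref{lem:primelem} (using strong unboundedness of $b_i$) gives that the convergence $\alpha_t^i \to \mathrm{id}$ is not uniform on $(M_i)_1$.

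With all five hypotheses of Theorem \ref{uniquefactor} verified for each $i$, the conclusions (i), (ii), (iii) of that theorem translate verbatim into the conclusions (i), (ii), (iii) of the corollary, since $M = M_1 \cls\otimes \cdots \cls\otimes M_k = L(\rG_1) \cls\otimes \cdots \cls\otimes L(\rG_k)$ by definition. The unitary conjugacies and amplifications appearing in the statement are those provided by Hoff's theorem.

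There is no substantive obstacle here: all of the technical work has been carried out earlier in the paper. The only point requiring mild care is bookkeeping — ensuring that the abelian subalgebras $A_i$ and deformations $\alpha_t^i$ produced by our construction are exactly the data required as input to Theorem \ref{uniquefactor}, and that factoriality combined with Theorem \ref{full} legitimately supplies fullness (as opposed to just the absence of property Gamma, which coincide for $\mathrm{II}_1$ factors with separable predual). Once these identifications are made, the corollary follows immediately.
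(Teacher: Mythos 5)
Your proposal is correct and follows essentially the same route as the paper: the paper's proof likewise consists of applying Hoff's Theorem \ref{uniquefactor} directly, with its hypotheses supplied by the earlier results (the bimodule conditions and non-uniform convergence from Lemma \ref{lem:primelem}, as used in Theorem \ref{prime}, and fullness from Theorem \ref{full}). Your explicit hypothesis-by-hypothesis verification is just a more detailed write-up of the same argument the paper leaves implicit.
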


As with Theorems \ref{prime}, \ref{full}, this theorem can be applied to the case of a transformation groupoid. We omit writing down this result.

\section{Maximal rigid subalgebras}\label{maxrigsec}

In order to finish the article, let us describe the maximal rigid subalgebras of $L(\rG)$. We will achieve that in the following two propositions.

\begin{prop} \label{maxrig1}
  Let $ \rG $ be an ergodic discrete measured groupoid and $ \pi $ an orthogonal representation of $ \rG $ on a real Hilbert bundle $ X\ast \H $. Let $ b\in Z^{1}(\rG,\pi) $ be a $1$-cocycle and set 
  $$
    \mathcal{S} = \{g\in \rG : b(g) = 0\}.
  $$
  Then $ \mathcal{S} $ is a wide (i.e $\mathcal{S}^{(0)} = \rG^{(0)}$) discrete measured subgroupoid of $ \rG $. Moreover, if $ L(\mathcal{S}) $ is diffuse and $ \pi\vert_{\mathcal{S}} $ is weak mixing, then $ L(\mathcal{S}) $ is a maximal rigid subalgebra for $ \alpha_{b} $.
\end{prop}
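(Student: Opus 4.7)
I would split the proof into three parts corresponding to the three claims in the statement.

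\textbf{That $\mathcal{S}$ is a wide subgroupoid} is a formal verification. The cocycle identities $b(gh) = b(g) + \pi(g)b(h)$ and $b(g^{-1}) = -\pi(g^{-1})b(g)$ immediately make $\mathcal{S} = b^{-1}(0)$ closed under composition and inversion. For any unit $x \in \rG^{(0)}$, one has $x \cdot x = x$, and the cocycle identity yields $b(x) = 2b(x)$, so $b(x) = 0$; this gives $\rG^{(0)} \sub \mathcal{S}$, i.e., wideness.

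\textbf{Rigidity of $L(\mathcal{S})$:} For any $\sigma \in [\mathcal{S}]$, the bisection $\sigma \subset \mathcal{S}$ satisfies $b(x\sigma) = 0$ for $\mu$-a.e. $x$, so
$$
f_{c_t,\sigma}(x, r) \;=\; \omega_x(t\, b(x\sigma))(r) \;=\; \omega_x(0)(r) \;=\; 1.
$$
Hence $\alpha_{b,t}(au_\sigma) = au_\sigma$ for every $a \in A$ and $\sigma \in [\mathcal{S}]$, and by density $\alpha_{b,t}|_{L(\mathcal{S})} = \mathrm{id}$, which trivially implies $L(\mathcal{S})$ is $\alpha_b$-rigid.

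\textbf{Maximality} is the substantive step, and I plan to approach it via the rigid-envelope framework of Theorem \ref{rigenvelope}. The first sub-goal is to verify the hypothesis $L(\mathcal{S})' \cap \widetilde{M} \sub L(\rG)$. Using the Fock space realization of Lemma \ref{fockspace} together with Remark \ref{Gaussmix}, weak mixing of $\pi|_\mathcal{S}$ promotes to weak mixing of the Koopman-type representation $\rho|_\mathcal{S}$ on $L^2(X \ast \Omega) \ominus L^2(X)$. Consequently, the only $\mathcal{S}$-invariant vectors in the identification $L^2(\widetilde{M}) \ominus L^2(M) \cong [L^2(X\ast\Omega) \ominus L^2(X)] \otimes_A L^2(\rG)$ are trivial; a standard Fourier-expansion argument along bisections then identifies any $z \in L(\mathcal{S})' \cap \widetilde{M}$ with such an invariant vector and forces $z \in L(\rG)$. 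Theorem \ref{rigenvelope} then produces a unique maximal $\alpha_b$-rigid subalgebra $P \sub L(\rG)$ containing $L(\mathcal{S})$.

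The main obstacle I anticipate is proving $P = L(\mathcal{S})$. My plan is to assume toward contradiction the existence of $y \in P$ with $y \perp L^2(L(\mathcal{S}))$ and $y \neq 0$, refine its bisection decomposition so that the non-$L(\mathcal{S})$ part is supported on bisections $\sigma$ disjoint from $\mathcal{S}$ (so $b(x\sigma) \neq 0$ on a set of positive measure), and exploit the diffuseness of $L(\mathcal{S})$ together with mixingness of the orthocomplement bimodule relative to $L(\mathcal{S})$ (which follows from weak mixing of $\pi|_{\mathcal{S}}$ via Proposition \ref{prop:bimmixing} applied to the subgroupoid) to produce a sequence of $L(\mathcal{S})$-conjugates of $y$ on which $q \mapsto \|\delta_t(q)\|_2$ fails to converge uniformly to zero, contradicting the $\alpha_b$-rigidity of $P$. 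The explicit identity
$$
\|\delta_t(au_\sigma)\|_2^2 \;=\; \|(f_{c_t,\sigma} - e^{-t^2\|b(\cdot\sigma)\|^2}) au_\sigma\|_2^2 \;=\; \int_X |a(x)|^2\bigl(1 - e^{-2t^2\|b(x\sigma)\|^2}\bigr)\, d\mu(x),
$$
valid for bisections $\sigma$ disjoint from $\mathcal{S}$, plays the central quantitative role in obtaining this contradiction.
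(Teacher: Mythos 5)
Your first three steps track the paper closely: the formal verification that $\mathcal{S}$ is a wide subgroupoid, the observation that $f_{c_t,\sigma}=1$ for $\sigma\in[\mathcal{S}]$ so that $\alpha_{b,t}|_{L(\mathcal{S})}=\mathrm{id}$, and the verification of $L(\mathcal{S})'\cap\widetilde{M}\subset L(\rG)$ via weak mixing of the Gaussian (Koopman-type) representation restricted to $\mathcal{S}$ (the paper phrases this through the conjugation action $\kappa\otimes\lambda_c$ of $[\mathcal{S}]$ on $L^2(\widetilde{M})\ominus L^2(M)$, using Lemma \ref{fockspace} and Remark \ref{Gaussmix}) are all essentially the paper's argument, and Theorem \ref{rigenvelope} then produces the maximal rigid $P\supseteq L(\mathcal{S})$. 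The gap is in your final step, where you must show $P=L(\mathcal{S})$.

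Two concrete problems there. First, you invoke Proposition \ref{prop:bimmixing} to get that the orthocomplement bimodule is mixing \emph{relative to $L(\mathcal{S})$} from weak mixing of $\pi|_{\mathcal{S}}$; that proposition requires a mixing ($c_0$) representation and yields mixing relative to $A=L^\infty(X)$, not relative to $L(\mathcal{S})$, and weak mixing of $\pi|_{\mathcal{S}}$ only buys the absence of invariant vectors (which is exactly what the relative commutant step needs), not any relative mixing statement. Second, and more fundamentally, the contradiction scheme cannot work as designed: since $\alpha_{b,t}$ is a $*$-automorphism acting trivially on $L(\mathcal{S})$, for every unitary $u\in L(\mathcal{S})$ one has $\alpha_{b,t}(uyu^*)-uyu^*=u\big(\alpha_{b,t}(y)-y\big)u^*$, so $\norm{\alpha_{b,t}(uyu^*)-uyu^*}_2=\norm{\alpha_{b,t}(y)-y}_2$ is constant along the $L(\mathcal{S})$-conjugacy orbit of a fixed $y\in P$ and tends to $0$ as $t\to 0$ by pointwise convergence of the deformation. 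Hence no sequence of $L(\mathcal{S})$-conjugates of a single element can defeat uniform convergence on $(P)_1$, and your (correct) identity for $\norm{\delta_t(au_\sigma)}_2$ never gets traction. The paper closes this step with a genuinely different ingredient: the quantitative inheritance result \cite[Theorem 3.5]{dSHH21}, which gives $\norm{(\alpha_{b,t}-\mathrm{id})|_{P}}_{\infty,2}\leq 24\,\norm{(\alpha_{b,t}-\mathrm{id})|_{L(\mathcal{S})}}_{\infty,2}=0$ for small $t$; the group property of $t\mapsto\alpha_{b,t}$ then forces $\alpha_{b,t}|_{P}=\mathrm{id}$ for all $t$, and since the fixed-point algebra of the deformation inside $L(\rG)$ consists exactly of elements whose Fourier expansion is supported where $b$ vanishes, i.e. $L(\mathcal{S})$, one concludes $P=L(\mathcal{S})$. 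Some quantitative input of this kind (or an intertwining argument) is needed; rigidity of $P$ plus the existence of $y\in P$ with $y\perp L^2(L(\mathcal{S}))$ does not by itself produce a contradiction.
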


\begin{proof}
    We will follow the proof of \cite[Proposition 4.3]{dSHH21}. $\mathcal S$ is easily seen to be a wide subgroupoid; every unit $x\in X$ satisfies $x^2=x$ and hence the $1$-cocycle identity implies $x\in \S$. Showing closedness for multiplication and inverses is even easier. We also note that $\alpha_{b,t}\vert_{L(\S)}={\rm id}_{L(\S)}$. 
    
    We now want to show that $L(\S)'\cap \widetilde{M}\subset M$. Note that, for $\tau\in[[\rG]]$ and $\psi=\sum_{\si\in [[\widetilde\rG]]} a_\si u_\si\in L(\widetilde\rG)$, one has
    \[
        u_\tau \psi u_\tau^*=u_\tau \big(\sum_{\si\in [[\rG]]} a_\si u_\si\big)  u_\tau^* = \sum_{\si\in [[\rG]]} \theta_\tau(a_\si) u_{\tau\si\tau^{-1}}=\kappa\otimes\lambda_c(\tau)\psi,
    \]
    where $\kappa:[[\rG]]\to L^2(X\ast\Omega)$ is the Koopman representation (cf. \cite[Section 3.8]{GaLu17}) and $\lambda_c:[[\rG]]\to L^2(\rG)$ is the conjugation representation, that is, $\lambda_c(\si)\phi(x)=\phi(\si^{-1}x\si)$ for all $\phi\in L^2(\rG)$ and $\si\in [[\rG]]$. It is therefore enough to argue that the representation $(\kappa\otimes\lambda_c)|_{[\S]}$ on $L^2(\widetilde M)\ominus L^2(M)$ has no nonzero invariant vectors. But this is the case as $(\kappa\otimes\lambda_c)|_{[\S]}=\kappa|_{[\S]}\otimes\lambda_c|_{[\S]}$ and $\kappa|_{[\S]}$ is weakly mixing because of Remark \ref{Gaussmix} and the assumption on $\pi|_\S$.

    Finally, we let $P$ be the rigid envelope of $L(\S)$ under the deformation $\alpha_{b,t}$. By \cite[Theorem 3.5]{dSHH21}, we have that 
    \[
    \norm{(\alpha_{b,t}-{\rm id})|_{P}}_{\infty,2}\leq 24\norm{(\alpha_{b,t}-{\rm id})|_{L(\S)}}_{\infty,2}=0
    \] 
    for all small enough $t>0$. Since $t\mapsto \alpha_{c,t}$ is a homomorphism, we have that $\alpha_{c,t}\big|_{P}$ is the identity for all $t$. But then it follows that $P\leq L(\S)$, and therefore $P=L(\S)$.
\end{proof}

\begin{thm} \label{maxrig2}
  Let $ \rG $ be an ergodic discrete measured groupoid and $ \pi $ an orthogonal representation of $ \rG $ on a real Hilbert bundle $ X\ast \H $. Let $ b\in Z^{1}(\rG,\pi) $ be a $1$-cocycle and suppose $\mathcal{S}\leq \rG$ is a discrete measured subgroupoid with $L(\mathcal{S})$ diffuse and such that $\pi\vert_\mathcal{S}$ is weakly mixing and $b\vert_\mathcal{S}$ is bounded. Let $P$ be the rigid envelope of $L(\mathcal{S})$. Then $P=L(\mathcal{S}')$, where $\mathcal{S}\leq \mathcal{S}'\leq \rG$ and $\mathcal{S}'$ is a maximal subgroupoid satisfying that $b\vert_{\mathcal{S}'}$ is bounded.
\end{thm}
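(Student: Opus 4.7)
The plan is to reduce to the setting of Proposition \ref{maxrig1} by perturbing $b$ by a coboundary so that the resulting cocycle vanishes on $\mathcal{S}$. Since $b|_{\mathcal{S}}$ is bounded, Lemma \ref{unboundedness} yields a measurable section $\xi \in S(X\ast \H)$, defined $\mu$-a.e.\ on $X$, with $b(g) = \xi_{\r(g)} - \pi(g)\xi_{\d(g)}$ for $\mu_{\mathcal{S}}$-a.e.\ $g \in \mathcal{S}$. Set
\[
    b'(g) \coloneqq b(g) - \xi_{\r(g)} + \pi(g)\xi_{\d(g)} \in Z^{1}(\rG,\pi),
\]
so that $b'|_{\mathcal{S}} = 0$, and define $\mathcal{S}' \coloneqq \{g \in \rG : b'(g) = 0\}$. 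Then $\mathcal{S} \leq \mathcal{S}' \leq \rG$. The inclusion $[\mathcal{S}] \subseteq [\mathcal{S}']$ shows that weak mixing of $\pi|_{\mathcal{S}}$ implies weak mixing of $\pi|_{\mathcal{S}'}$, and $L(\mathcal{S}') \supseteq L(\mathcal{S})$ is diffuse. So Proposition \ref{maxrig1}, applied to $b'$ and $\mathcal{S}'$, yields that $L(\mathcal{S}')$ is maximal $\alpha_{b'}$-rigid in $M$.

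Next, I transfer this conclusion from $\alpha_{b'}$ to $\alpha_{b}$ via an inner conjugation in $\widetilde{M}$. Let $v_t \in \mathcal{U}(L^\infty(X\ast\Omega))$ be the unitary defined by $v_t(x,r) \coloneqq \omega_x(t\xi_x)(r)$. Using $\omega_x(t \pi(x\sigma)\xi_{\d(x\sigma)}) = \rho(x\sigma)\omega_{\d(x\sigma)}(t\xi_{\d(x\sigma)})$ together with $u_\sigma v_t u_\sigma^* = \theta_\sigma(v_t)$, a direct computation in the Gaussian extension gives the key identity
\[
    f_{c_t^{b'}, \sigma} = f_{c_t^{b}, \sigma} \cdot v_t^* \cdot \theta_\sigma(v_t),
\]
which upon tracking through the definitions of $\alpha_{b,t}$ and $\alpha_{b',t}$ yields $\alpha_{b',t} = \mathrm{Ad}(v_t^*) \circ \alpha_{b,t}$ on all of $\widetilde{M}$. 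Since
\[
    \|v_t - 1\|_2^2 = 2 - 2\int_X e^{-t^2\|\xi_x\|^2}\,d\mu(x) \xrightarrow{t\to 0} 0,
\]
for $x \in (M)_1$ one has $\|v_t^* x v_t - x\|_2 = \|[x, v_t - 1]\|_2 \leq 2\|v_t-1\|_2$, which tends to zero uniformly on $(M)_1$. By the triangle inequality, a subalgebra $Q \subseteq M$ is $\alpha_b$-rigid if and only if it is $\alpha_{b'}$-rigid, so $L(\mathcal{S}')$ is maximal $\alpha_b$-rigid. As the rigid envelope $P$ is $\alpha_b$-rigid and contains $L(\mathcal{S}') \supseteq L(\mathcal{S})$, maximality of $L(\mathcal{S}')$ forces $P = L(\mathcal{S}')$.

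It remains to verify that $\mathcal{S}'$ is maximal among subgroupoids of $\rG$ containing $\mathcal{S}$ on which $b$ is bounded. Boundedness of $b|_{\mathcal{S}'}$ is immediate from $b(g) = \xi_{\r(g)} - \pi(g)\xi_{\d(g)}$ on $\mathcal{S}'$ and Lemma \ref{unboundedness}. Suppose now $\mathcal{S} \leq \mathcal{S}'' \leq \rG$ with $b|_{\mathcal{S}''}$ bounded; Lemma \ref{unboundedness} produces $\eta \in S(X\ast\H)$ with $b(g) = \eta_{\r(g)} - \pi(g)\eta_{\d(g)}$ on $\mathcal{S}''$. On $\mathcal{S}$, this matches the formula for $\xi$, so $\eta - \xi$ is $\mathcal{S}$-invariant. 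For each $R > 0$, the set $E_R \coloneqq \{x \in X : \|(\eta - \xi)_x\| \leq R\}$ is $\mathcal{S}$-invariant (by orthogonality of $\pi$), so the truncation $(\eta - \xi)\cdot 1_{E_R}$ is a bounded, hence $L^2$, $\mathcal{S}$-invariant section. Weak mixing of $\pi|_{\mathcal{S}}$ forces this truncation to vanish for every $R$, giving $\eta = \xi$ $\mu$-a.e.; hence $b'|_{\mathcal{S}''} = 0$ and $\mathcal{S}'' \leq \mathcal{S}'$.

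The principal technical content of the argument lies in the explicit identification of the conjugating unitary $v_t$ relating $\alpha_{b,t}$ and $\alpha_{b',t}$, as the computation requires careful bookkeeping with the groupoid action on the Gaussian space; once this bridge is in place, the rest essentially follows by invoking Proposition \ref{maxrig1}. A secondary technical point is the truncation argument in the final step, which lifts weak mixing from $L^2$-invariant sections to arbitrary measurable invariant sections by leveraging the boundedness of $\eta$ on the exhausting sets afforded by Lemma \ref{unboundedness}.
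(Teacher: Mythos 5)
Your proof is correct, and its skeleton matches the paper's: perturb $b$ by the coboundary of the section $\xi$ witnessing boundedness of $b\vert_{\mathcal{S}}$, take $\mathcal{S}'$ to be the kernel of the perturbed cocycle, invoke Proposition \ref{maxrig1}, and transfer maximal rigidity back to $\alpha_b$. Where you genuinely differ is the transfer step. The paper estimates $\norm{(\alpha_{\tilde b,t}-\alpha_{b,t})\vert_M}_{\infty,2}\to 0$ directly, which forces it to invoke Lemma \ref{bound} (hence ergodicity) to get an orbit-wise bound $\sup_{g\in\rG_E^x}\norm{\tilde b(g)-b(g)}<\infty$ on a conull set before the dominated-convergence estimate applies. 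You instead prove the exact unitary conjugacy $\alpha_{b',t}=\mathrm{Ad}(v_t^*)\circ\alpha_{b,t}$ with $v_t=\omega(t\xi)\in L^\infty(X\ast\Omega)$ (your computation of $\theta_\sigma(v_t)$ is the correct one), and then only need $\norm{v_t-1}_2^2=2-2\int_X e^{-t^2\norm{\xi_x}^2}\dd{\mu(x)}\to 0$, which holds for an arbitrary measurable section by dominated convergence; this makes the comparison of the two deformations independent of any orbit-wise boundedness of the transfer section and is arguably cleaner, being the groupoid analogue of the standard fact that cohomologous cocycles yield unitarily conjugate Gaussian deformations. You also supply two details the paper leaves implicit: that $\pi\vert_{\mathcal{S}'}$ is weak mixing (monotonicity in the subgroupoid via $[\mathcal{S}]\subseteq[\mathcal{S}']$, which is needed to apply Proposition \ref{maxrig1} to $b'$), and the truncation argument on the sets $E_R$ showing that the $\pi\vert_{\mathcal{S}}$-invariant section $\eta-\xi$, which is not a priori square-integrable, must vanish. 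Both versions conclude identically: the classes of rigid subalgebras for $\alpha_b$ and $\alpha_{b'}$ coincide, so $L(\mathcal{S}')$ is maximal $\alpha_b$-rigid and hence equals the rigid envelope $P$.
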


\begin{proof}
    Since $\mathcal S$ is ergodic, Lemma \ref{bound} gives a measurable section $\xi:X\to X\ast \H$, such that $b(g)=\xi_{\r(g)}-\pi(g)\xi_{\d(g)}$, for all $g\in \mathcal S$. Now define $\tilde b(g)\coloneqq b(g)-\xi_{\r(g)}+\pi(g)\xi_{\d(g)}$ and its associated subgroupoid
    $$
    \mathcal S'={\rm Ker}\,\tilde b=\{g\in\rG: \tilde b(g)=0\}.
    $$
    It follows that $\mathcal S'$ contains $\mathcal S$ and is maximal under the condition that $b\vert_{\mathcal{S}'}$ is bounded: if $\mathcal S'\leq \mathcal S''$ and $b(g)=\xi_{\r(g)}'-\pi(g)\xi_{\d(g)}'$ for $g\in \mathcal S''$, then $\xi'-\xi$ is invariant under $\pi\vert_\mathcal{S}$ and since $\pi\vert_\mathcal{S}$ is weakly mixing, $\xi'=\xi$, from where $\mathcal S'=\mathcal S''$.
    
    Now note that $\tilde b-b$ is a coboundary, so by Lemma \ref{bound}, there is $E\subset X$ of measure 1 such that
    $$
    \sup_{g\in \rG_E^x} \norm{\tilde b(g)-b(g)}<\infty.
    $$ Now let $\alpha_{\tilde b},\alpha_{b}$ be the associated $s$-malleable deformations inside $\widetilde M$. We observe \begin{align*}
        \norm{(\alpha_{\tilde b,t}-\alpha_{\tilde b,t})\big{|}_M}_{L^2(M)\to L^2(\widetilde M)}^2&\leq \sup_{\sigma\in [\rG]} 2-\tau(\overline{f_{\tilde{c}_t,\sigma}}f_{{c}_t,\sigma}+\overline{f_{{c}_t,\sigma}}f_{\tilde{c}_t,\sigma}) \\ 
        &\leq 2\sup_{\sigma\in [\rG]} \lr{1-\int_{X} e^{-t^2\norm{\tilde b(x\sigma)-b(x\sigma)}^2}\dd{\mu(x)}} \\ 
        &\leq 2-2\int_{X} e^{-t^2\sup_{g\in \rG_E^x} \norm{\tilde b(g)-b(g)}^2}\dd{\mu(x)}
    \end{align*} thus $$
    \lim_{t\to 0} \norm{(\alpha_{\tilde b,t}-\alpha_{\tilde b,t})\big{|}_M}_{\infty,2}^2=0.
    $$ So a diffuse subalgebra $Q\leq M$ is $\alpha_{b}$-rigid if and only if it is $\alpha_{\tilde b}$-rigid and by Proposition \ref{maxrig1}, $L(\mathcal S')$ is maximal rigid for $\alpha_b$ and equals $P$ (Theorem \ref{rigenvelope}).
\end{proof}

In particular, when the groupoid is a transformation groupoid $\rG=X\rtimes_\theta G$ and the $1$-cocycle comes from the acting group $G$ (that is, the setting of Example \ref{exstrongunbound}), we note that $\S'$ is of the form $\S'=X\rtimes_\theta H$, for some subgroup $H\leq G$ and therefore the rigid envelope of $L(\S)$ is a crossed product $L^\infty(X,\mu)\rtimes H$ (even when $\S$ is not assumed to be a transformation subgroupoid).

Before stating the last theorem of the paper, let us recall the definition of property (T), as taken from \cite{Po06}.

\begin{defn}
    A tracial von Neumann algebra $(M,\tau)$ is said to have \emph{property (T)} if for every $\varepsilon>0$, there exists a finite subset $F\subset M$ and a $\delta>0$ such that for every bimodule $_M\H_M$ and any tracial vector $\xi\in\H$ satisfying $\max_{x\in F}\norm{x\xi-\xi x}\leq \delta $, there exists a central vector $\eta\in\H$ such that $\norm{\eta-\xi}\leq\varepsilon.$
\end{defn}

\begin{ex}
    A group von Neumann algebra has property (T) exactly when the group has Kazhdan's property (T). In particular $L(\rm{SL}_k(\Z))$ has property (T) when $k\geq 3.$
\end{ex}

\begin{thm}\label{bens}
    Let $ \rG $ be an ergodic discrete measured groupoid and $ \pi $ a weak mixing, orthogonal representation of $ \rG $ on a real Hilbert bundle $ X\ast \H $. Suppose that $\rG$ admits an unbounded 1-cocycle into $\pi$ and that $L(\mathcal{G})$ is diffuse. Then, for any pair $P,Q\subset L(\rG)$ of subalgebras with property (T) such that $P\cap Q$ is diffuse, we have that $P\vee Q\not=L(\rG)$.
\end{thm}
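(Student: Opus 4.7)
The plan is to show that if $P\vee Q = L(\rG)$, then $L(\rG)$ is itself $\alpha$-rigid for the Gaussian $s$-malleable deformation $\alpha = \alpha_{b}$ associated to the unbounded cocycle $b$, which will contradict the unboundedness of $b$. Set $M = L(\rG)$, $\widetilde{M} = L(\widetilde{\rG})$, and $A = L^{\infty}(X,\mu)$; by the cocycle identity $b$ vanishes on units, so $\alpha_{t}\vert_{A} = \mathrm{id}_{A}$ for all $t$.

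First, property (T) of $P$ and $Q$, applied in the standard Popa-style fashion to the $\alpha_{t}$-twisted bimodule structure on $L^{2}(\widetilde{M})$ (the vector $\widehat{1}$ is almost-central, so by (T) it is near an actual intertwiner between $\iota$ and $\alpha_{t}\vert_{P}$), implies that both $P$ and $Q$ are $\alpha$-rigid, hence so is their diffuse intersection $R := P\cap Q$. Next, via the identification ${}_{M}L^{2}(\widetilde{M})\ominus L^{2}(M)_{M}\cong {}_{M}\B(\rho)_{M}$ coming from Lemma \ref{fockspace}, together with the weak mixing of $\rho$ (Remark \ref{Gaussmix}, using the weak mixing of $\pi$), an argument in the spirit of Proposition \ref{maxrig1} shows that no nonzero $R$-central vector exists in $L^{2}(\widetilde{M})\ominus L^{2}(M)$, so $R'\cap \widetilde{M}\subset M$. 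Theorem \ref{rigenvelope} then produces a unique maximal $\alpha$-rigid envelope $N\subset M$ containing $R$. Since $P$ and $Q$ are each $\alpha$-rigid and contain $R$, the universal property of the rigid envelope yields $P,Q\subset N$, hence $P\vee Q\subset N$.

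Suppose toward contradiction that $P\vee Q = M$. Then $N = M$, i.e., $L(\rG)$ itself is $\alpha$-rigid: $\lim_{t\to 0}\sup_{x\in (M)_{1}}\|\alpha_{t}(x)-x\|_{2} = 0$. The final step is to deduce that $b$ must then be a coboundary. Uniform $L^{2}$-continuity of the one-parameter group $\alpha$ at $t = 0$ on $(M)_{1}$ forces the infinitesimal generator, a $\ast$-derivation $\partial: M\to L^{2}(\widetilde{M})\ominus L^{2}(M)\cong \B(\rho)$, to be bounded; moreover $\partial$ takes values in the first Fock level, which under the identifications of Section \ref{bimoddddd} corresponds to the bimodule $\B(\pi)$. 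A standard Peterson-type argument shows that such a bounded derivation is inner, amounting to the existence of a measurable section $\xi\in S(X\ast\H)$ with $b(g) = \xi_{\r(g)} - \pi(g)\xi_{\d(g)}$ for $\mu_{\rG}$-a.e. $g$. By Lemma \ref{unboundedness}, $b$ is then bounded, contradicting the hypothesis.

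The main obstacle I expect is this last step: translating uniform $\alpha$-rigidity of $L(\rG)$ into boundedness of $b$. For transformation and semidirect product groupoids, unbounded and strongly unbounded cocycles coincide by Proposition \ref{unbound}, and then a direct bisection-based argument modeled on the spectral-gap computation in Lemma \ref{lem:primelem} suffices. In the general groupoid setting one must carefully set up the groupoid analogue of Peterson's derivation formalism, identify the infinitesimal generator of $\alpha_{b}$ with the cocycle $b$ via the first Fock component of $\B(\rho)$ (through Lemma \ref{fockspace}), and then invoke the classical Popa--Sinclair fact that bounded $L^{2}$-valued $\ast$-derivations of tracial von Neumann algebras are inner to recover a section $\xi$ trivializing $b$.
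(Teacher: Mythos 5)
Your overall skeleton---property (T) forces $\alpha_b$-rigidity of $P$ and $Q$, the \textcite{dSHH21} machinery upgrades this to rigidity of an algebra containing $P\vee Q$, and rigidity of all of $L(\rG)$ should contradict unboundedness of $b$---is the same as the paper's, but two of your steps have genuine gaps. First, your justification that $R'\cap\widetilde{M}\subset M$ for $R=P\cap Q$ does not work: the argument of Proposition \ref{maxrig1} kills invariant vectors by conjugating with the canonical unitaries $u_\tau$, $\tau\in[\S]$, of a \emph{subgroupoid}, and an abstract diffuse subalgebra $R$ carries no such unitaries. Under mere weak mixing the statement is false at this level of generality: if $R$ happens to lie inside $A=L^\infty(X)$ (nothing in your argument excludes this), then every vector of $L^2(X\ast\Omega)\ominus L^2(X)\subset L^2(\widetilde{M})\ominus L^2(M)$ is $R$-central, since $L^\infty(X\ast\Omega)$ is abelian, so $R'\cap\widetilde{M}\not\subset M$. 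Ruling out central vectors for \emph{arbitrary} diffuse subalgebras is what mixing relative to $A$ buys (Proposition \ref{prop:bimmixing}), and Theorem \ref{bens} only assumes weak mixing. The paper sidesteps this entirely: it quotes \cite[Theorem 1.3]{dSHH21} (the join of two rigid subalgebras with diffuse intersection is rigid) and \cite[Theorem 1.2]{dSHH21} for the existence of a maximal rigid subalgebra, neither of which requires a relative commutant condition; your detour through Theorem \ref{rigenvelope} is therefore both unnecessary and, as justified, broken.

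Second, the concluding step---$M$ $\alpha_b$-rigid implies $b$ is a coboundary---is precisely the hard point, and you only sketch it. Uniform convergence of $\alpha_{b,t}$ on $(M)_1$, tested on the unitaries $u_\sigma$, only shows that $b$ is not \emph{strongly} unbounded (this is the computation in Lemma \ref{lem:primelem} read backwards); since the hypothesis here is mere unboundedness and the implication ``unbounded $\Rightarrow$ strongly unbounded'' is only established for semidirect product groupoids (Proposition \ref{unbound}), your bisection/spectral-gap fallback does not cover the general case of the theorem. The derivation route (bounded generator, bounded derivations into $L^2$-bimodules are inner, read off a trivializing section) is not set up anywhere in the paper and requires real work: the implementing vector lives in $\B(\pi)\cong\big(\int_X^{\oplus}\H_x\,d\mu(x)\big)\otimes_A L^2(M)$, and producing from it a measurable section $\xi\in S(X\ast\H)$ with $b(g)=\xi_{\r(g)}-\pi(g)\xi_{\d(g)}$ needs a weak-mixing averaging argument in the spirit of Peterson--Sinclair that you do not supply. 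The paper closes the argument differently, by invoking Theorem \ref{maxrig2}: maximal rigid subalgebras arising as rigid envelopes are of the form $L(\S')$ with $b\vert_{\S'}$ bounded (built from Proposition \ref{maxrig1}, the cohomologous correction of the cocycle, and Lemma \ref{bound}), and this groupoid-structural characterization is exactly the ingredient your proposal is missing.
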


\begin{proof}
    Since $P$ and $Q$ have property (T), they are rigid for any deformation \cite[Proposition 4.1]{Po06}. It follows from \cite[Theorem 1.3]{dSHH21} that $P\vee Q$ is rigid, and therefore it is contained in a maximal rigid subalgebra \cite[Theorem 1.2]{dSHH21}. But Theorem \ref{maxrig2} prevents this algebra from being the entirety of $L(\rG)$.
\end{proof}

\section*{Acknowledgments}

Felipe Flores has been supported by the NSF grants DMS-2000105 and DMS-2144739. James Harbour has been supported by the NSF grant DMS-2000105, the Ingrassia Family Research Grant, and the Harrison Research Award. Both authors would like to express their gratitude to Ben Hayes for his guidance and the uncountable interesting discussions that we have had. We would also like to thank Soham Chakraborty for valuable insight on measured groupoids.

\printbibliography

@article {IoPoVa10,
    AUTHOR = {Ioana, A. and Popa, S. and Vaes, S.},
     TITLE = {A class of superrigid group von {N}eumann algebras},
   JOURNAL = {Ann. of Math. (2)},
  FJOURNAL = {Annals of Mathematics. Second Series},
    VOLUME = {178},
      YEAR = {2013},
    NUMBER = {1},
     PAGES = {231--286},
}

@book {Di81,
    AUTHOR = {Dixmier, J.},
     TITLE = {von {N}eumann algebras},
    SERIES = {North-Holland Mathematical Library},
    VOLUME = {27},
 PUBLISHER = {North-Holland Publishing Co., Amsterdam-New York},
      YEAR = {1981},
     PAGES = {xxxviii+437},
}

@misc{BCDK24,
      title={Factoriality of groupoid von Neumann algebras}, 
      author={T. Berendschot and S. Chakraborty and M. Donvil and S.-J. Kim},
      year={2024},
      eprint={2402.04449},
      archivePrefix={arXiv},
      primaryClass={math.OA},
      %url={https://arxiv.org/abs/2402.04449}, 
}

@article{Ho16,
  AUTHOR = {Hoff, D. J.},
     TITLE = {Von {N}eumann algebras of equivalence relations with
              nontrivial one-cohomology},
   JOURNAL = {J. Funct. Anal.},
  FJOURNAL = {Journal of Functional Analysis},
    VOLUME = {270},
      YEAR = {2016},
    NUMBER = {4},
     PAGES = {1501--1536},
     
}

@article{dSHH21,
title={Maximal rigid subalgebras of deformations and $L^2$-cohomology},
  author={{de Santiago}, R. and Hayes, B. and Hoff, D. J. and Sinclair, T.},
	year = 2021,
	publisher = {Mathematical Sciences Publishers},
	volume = {14},
	number = {7},
	pages = {2269--2306},
	journal={Analysis \& PDE},
}

@article{Lu18,
	AUTHOR = {Lupini, M.},
     TITLE = {A von Neumann algebra characterization of property ($T$) for
              groupoids},
   JOURNAL = {J. Aust. Math. Soc.},
  FJOURNAL = {Journal of the Australian Mathematical Society},
    VOLUME = {108},
      YEAR = {2020},
    NUMBER = {3},
     PAGES = {363--386},
}

@article {Ef75,
    AUTHOR = {Effros, E. G.},
     TITLE = {Property {$\Gamma $} and inner amenability},
   JOURNAL = {Proc. Amer. Math. Soc.},
  FJOURNAL = {Proceedings of the American Mathematical Society},
    VOLUME = {47},
      YEAR = {1975},
     PAGES = {483--486},
}

@article {Ch82,
    AUTHOR = {Choda, M.},
     TITLE = {Inner amenability and fullness},
   JOURNAL = {Proc. Amer. Math. Soc.},
  FJOURNAL = {Proceedings of the American Mathematical Society},
    VOLUME = {86},
      YEAR = {1982},
    NUMBER = {4},
     PAGES = {663--666},
}

@article {HoIs16,
    AUTHOR = {Houdayer, C. and Isono, Y.},
     TITLE = {Bi-exact groups, strongly ergodic actions and group measure
              space type {III} factors with no central sequence},
   JOURNAL = {Comm. Math. Phys.},
  FJOURNAL = {Communications in Mathematical Physics},
    VOLUME = {348},
      YEAR = {2016},
    NUMBER = {3},
     PAGES = {991--1015},
}

@article {Oz16,
    AUTHOR = {Ozawa, N.},
     TITLE = {A remark on fullness of some group measure space von {N}eumann
              algebras},
   JOURNAL = {Compos. Math.},
  FJOURNAL = {Compositio Mathematica},
    VOLUME = {152},
      YEAR = {2016},
    NUMBER = {12},
     PAGES = {2493--2502},
}

@book {Pa02,
    AUTHOR = {Paulsen, V.},
     TITLE = {Completely bounded maps and operator algebras},
    SERIES = {Cambridge Studies in Advanced Mathematics},
    VOLUME = {78},
 PUBLISHER = {Cambridge University Press, Cambridge},
      YEAR = {2002},
     PAGES = {xii+300},
}

@article {Ma20,
    AUTHOR = {Marrakchi, A.},
     TITLE = {Fullness of crossed products of factors by discrete groups},
   JOURNAL = {Proc. Roy. Soc. Edinburgh Sect. A},
  FJOURNAL = {Proceedings of the Royal Society of Edinburgh. Section A.
              Mathematics},
    VOLUME = {150},
      YEAR = {2020},
    NUMBER = {5},
     PAGES = {2368--2378},
}

@misc{GaLu17,
    title={The complexity of conjugacy, orbit equivalence, and von Neumann equivalence of actions of nonamenable groups}, 
    author={E. Gardella and M. Lupini},
    year={2017},
    eprint={1708.01327},
    archivePrefix={arXiv},
    primaryClass={math.DS}
}

@incollection{Ki17,
	AUTHOR = {Kida, Y.},
     TITLE = {Splitting in orbit equivalence, treeable groups, and the
              {H}aagerup property},
 BOOKTITLE = {Hyperbolic geometry and geometric group theory},
    SERIES = {Adv. Stud. Pure Math.},
    VOLUME = {73},
     PAGES = {167--214},
 PUBLISHER = {Math. Soc. Japan, Tokyo},
      YEAR = {2017},
}

@article{PeSi12, 
	title={On cocycle superrigidity for Gaussian actions}, 
	volume={32},  
	number={1}, 
	journal={Ergodic Theory and Dynamical Systems}, 
	publisher={Cambridge University Press}, 
	author={J. Peterson and T. Sinclair},
	year={2012}, 
	pages={249–272}
}

@article{Si11,
    AUTHOR = {Sinclair, T.},
	title = {Strong solidity of group factors from lattices in $SO(n,1)$ and $SU(n,1)$},
	journal = {Journal of Functional Analysis},
	volume = {260},
	number = {11},
	pages = {3209-3221},
	year = {2011},
}

@article{Pe06,
      AUTHOR = {Peterson, J.},
     TITLE = {{$L^2$}-rigidity in von {N}eumann algebras},
   JOURNAL = {Invent. Math.},
  FJOURNAL = {Inventiones Mathematicae},
    VOLUME = {175},
      YEAR = {2009},
    NUMBER = {2},
     PAGES = {417--433},
}

@book {BeVa08,
    AUTHOR = {Bekka, B. and de la Harpe, P. and Valette, A.},
     TITLE = {Kazhdan's property ({T})},
    SERIES = {New Mathematical Monographs},
    VOLUME = {11},
 PUBLISHER = {Cambridge University Press, Cambridge},
      YEAR = {2008},
     PAGES = {xiv+472},
}

@article {Dr21,
    AUTHOR = {Drimbe, D.},
     TITLE = {Product rigidity in von {N}eumann and {${\rm C}^*$}-algebras
              via s-malleable deformations},
   JOURNAL = {Comm. Math. Phys.},
  FJOURNAL = {Communications in Mathematical Physics},
    VOLUME = {388},
      YEAR = {2021},
    NUMBER = {1},
     PAGES = {329--349},
}

@article {Po03,
    AUTHOR = {Popa, S.},
     TITLE = {Strong rigidity of {$\rm II_1$} factors arising from malleable
              actions of {$w$}-rigid groups. {I}},
   JOURNAL = {Invent. Math.},
  FJOURNAL = {Inventiones Mathematicae},
    VOLUME = {165},
      YEAR = {2006},
    NUMBER = {2},
     PAGES = {369--408},
       
}

@article {FM1:77,
    AUTHOR = {Feldman, J. and Moore, C. C.},
     TITLE = {Ergodic equivalence relations, cohomology, and von {N}eumann
              algebras. {I}},
   JOURNAL = {Trans. Amer. Math. Soc.},
  FJOURNAL = {Transactions of the American Mathematical Society},
    VOLUME = {234},
      YEAR = {1977},
    NUMBER = {2},
     PAGES = {289--324},
   MRCLASS = {22D40 (28A65 46L10)},
}

@article {FM2:77,
    AUTHOR = {Feldman, J. and Moore, C. C.},
     TITLE = {Ergodic equivalence relations, cohomology, and von {N}eumann
              algebras. {II}},
   JOURNAL = {Trans. Amer. Math. Soc.},
  FJOURNAL = {Transactions of the American Mathematical Society},
    VOLUME = {234},
      YEAR = {1977},
    NUMBER = {2},
     PAGES = {325--359},
}

@article {Co74,
    AUTHOR = {Connes, A.},
     TITLE = {Almost periodic states and factors of type {${\rm
              III}\sb{1}$}},
   JOURNAL = {J. Functional Analysis},
  FJOURNAL = {Journal of Functional Analysis},
    VOLUME = {16},
      YEAR = {1974},
     PAGES = {415--445},
}

@misc{BCDKK24,
      title={The Choquet-Deny Property for Groupoids}, 
      author={T. Berendschot and S. Chakraborty and M. Donvil and S.-J. Kim and M. Klisse},
      year={2024},
      eprint={2406.05004},
      archivePrefix={arXiv},
      primaryClass={math.FA},
      %url={https://arxiv.org/abs/2406.05004}, 
}

@book {Ke95,
    AUTHOR = {Kechris, A. S.},
     TITLE = {Classical descriptive set theory},
    SERIES = {Graduate Texts in Mathematics},
    VOLUME = {156},
 PUBLISHER = {Springer-Verlag, New York},
      YEAR = {1995},
     PAGES = {xviii+402},
}

@article {PoShVa20,
    AUTHOR = {Popa, S. and Shlyakhtenko, D. and Vaes, S.},
     TITLE = {Classification of regular subalgebras of the hyperfinite {$\rm
              II_1$} factor},
   JOURNAL = {J. Math. Pures Appl. (9)},
  FJOURNAL = {Journal de Math\'ematiques Pures et Appliqu\'ees. Neuvi\`eme
              S\'erie},
    VOLUME = {140},
      YEAR = {2020},
     PAGES = {280--308},
}

@article {CoFeWe81,
    AUTHOR = {Connes, A. and Feldman, J. and Weiss, B.},
     TITLE = {An amenable equivalence relation is generated by a single
              transformation},
   JOURNAL = {Ergodic Theory Dynam. Systems},
  FJOURNAL = {Ergodic Theory and Dynamical Systems},
    VOLUME = {1},
      YEAR = {1981},
    NUMBER = {4},
     PAGES = {431--450},
}

@article {MuvN43,
    AUTHOR = {Murray, F. J. and von Neumann, J.},
     TITLE = {On rings of operators. {IV}},
   JOURNAL = {Ann. of Math. (2)},
  FJOURNAL = {Annals of Mathematics. Second Series},
    VOLUME = {44},
      YEAR = {1943},
     PAGES = {716--808},
}

@article {Va13,
    AUTHOR = {Vaes, S.},
     TITLE = {One-cohomology and the uniqueness of the group measure space
              decomposition of a {${\rm II}_1$} factor},
   JOURNAL = {Math. Ann.},
  FJOURNAL = {Mathematische Annalen},
    VOLUME = {355},
      YEAR = {2013},
    NUMBER = {2},
     PAGES = {661--696},
}

@article {KeTs08,
    AUTHOR = {Kechris, A. S. and Tsankov, T.},
     TITLE = {Amenable actions and almost invariant sets},
   JOURNAL = {Proc. Amer. Math. Soc.},
  FJOURNAL = {Proceedings of the American Mathematical Society},
    VOLUME = {136},
      YEAR = {2008},
    NUMBER = {2},
     PAGES = {687--697},
}

@article {Pa23,
    AUTHOR = {Patchell, G.},
     TITLE = {Primeness of generalized wreath product {${\rm II}_1$}
              factors},
   JOURNAL = {Math. Z.},
  FJOURNAL = {Mathematische Zeitschrift},
    VOLUME = {309},
      YEAR = {2025},
    NUMBER = {3},
     PAGES = {Paper No. 43, 23},
}

@article {CIOS23,
    AUTHOR = {Chifan, I. and Ioana, A. and Osin, D. and Sun,
              B.},
     TITLE = {Wreath-like products of groups and their von {N}eumann
              algebras {I}: {$\rm W^\ast $}-superrigidity},
   JOURNAL = {Ann. of Math. (2)},
  FJOURNAL = {Annals of Mathematics. Second Series},
    VOLUME = {198},
      YEAR = {2023},
    NUMBER = {3},
     PAGES = {1261--1303},
}

@book {ADRe00,
    AUTHOR = {Anantharaman-Delaroche, C. and Renault, J.},
     TITLE = {Amenable groupoids},
    SERIES = {Monographies de L'Enseignement Math\'ematique [Monographs of
              L'Enseignement Math\'ematique]},
    VOLUME = {36},
      NOTE = {With a foreword by Georges Skandalis and Appendix B by E.
              Germain},
 PUBLISHER = {L'Enseignement Math\'ematique, Geneva},
      YEAR = {2000},
     PAGES = {196},
}

@article {Po08,
    AUTHOR = {Popa, S.},
     TITLE = {On the superrigidity of malleable actions with spectral gap},
   JOURNAL = {J. Amer. Math. Soc.},
  FJOURNAL = {Journal of the American Mathematical Society},
    VOLUME = {21},
      YEAR = {2008},
    NUMBER = {4},
     PAGES = {981--1000},
}

@article {ChIo10,
    AUTHOR = {Chifan, I. and Ioana, A.},
     TITLE = {Ergodic subequivalence relations induced by a {B}ernoulli
              action},
   JOURNAL = {Geom. Funct. Anal.},
  FJOURNAL = {Geometric and Functional Analysis},
    VOLUME = {20},
      YEAR = {2010},
    NUMBER = {1},
     PAGES = {53--67}
}

@inproceedings {Ga10,
    AUTHOR = {Gaboriau, D.},
     TITLE = {Orbit equivalence and measured group theory},
 BOOKTITLE = {Proceedings of the {I}nternational {C}ongress of
              {M}athematicians. {V}olume {III}},
     PAGES = {1501--1527},
 PUBLISHER = {Hindustan Book Agency, New Delhi},
      YEAR = {2010},
}

@article {PeTh11,
    AUTHOR = {Peterson, J. and Thom, A.},
     TITLE = {Group cocycles and the ring of affiliated operators},
   JOURNAL = {Invent. Math.},
  FJOURNAL = {Inventiones Mathematicae},
    VOLUME = {185},
      YEAR = {2011},
    NUMBER = {3},
     PAGES = {561--592},
}

@article {An05,
    AUTHOR = {Anantharaman-Delaroche, C.},
     TITLE = {Cohomology of property {T} groupoids and applications},
   JOURNAL = {Ergodic Theory Dynam. Systems},
  FJOURNAL = {Ergodic Theory and Dynamical Systems},
    VOLUME = {25},
      YEAR = {2005},
    NUMBER = {4},
     PAGES = {977--1013},
}

@article {Ku94,
    AUTHOR = {Kuhn, M. G.},
     TITLE = {Amenable actions and weak containment of certain
              representations of discrete groups},
   JOURNAL = {Proc. Amer. Math. Soc.},
  FJOURNAL = {Proceedings of the American Mathematical Society},
    VOLUME = {122},
      YEAR = {1994},
    NUMBER = {3},
     PAGES = {751--757}
}

@article {Ma17,
    AUTHOR = {Marrakchi, A.},
     TITLE = {Solidity of type {III} {B}ernoulli crossed products},
   JOURNAL = {Comm. Math. Phys.},
  FJOURNAL = {Communications in Mathematical Physics},
    VOLUME = {350},
      YEAR = {2017},
    NUMBER = {3},
     PAGES = {897--916},
}

@article {Po83,
    AUTHOR = {Popa, S.},
     TITLE = {Orthogonal pairs of {$\ast $}-subalgebras in finite von
              {N}eumann algebras},
   JOURNAL = {J. Operator Theory},
  FJOURNAL = {Journal of Operator Theory},
    VOLUME = {9},
      YEAR = {1983},
    NUMBER = {2},
     PAGES = {253--268},
}

@article {Ge98,
    AUTHOR = {Ge, L.},
     TITLE = {Applications of free entropy to finite von {N}eumann algebras.
              {II}},
   JOURNAL = {Ann. of Math. (2)},
  FJOURNAL = {Annals of Mathematics. Second Series},
    VOLUME = {147},
      YEAR = {1998},
    NUMBER = {1},
     PAGES = {143--157},
}

@article {Oz04,
    AUTHOR = {Ozawa, N.},
     TITLE = {Solid von {N}eumann algebras},
   JOURNAL = {Acta Math.},
  FJOURNAL = {Acta Mathematica},
    VOLUME = {192},
      YEAR = {2004},
    NUMBER = {1},
     PAGES = {111--117},
}

@misc{dSDeKh23,
      title={McDuff and Prime von Neumann algebras arising from Thompson-Like Groups}, 
      author={R. {de Santiago} and P. DeBonis and K. Khan},
      year={2024},
      eprint={2312.08345},
      archivePrefix={arXiv},
      primaryClass={math.OA},
}

@incollection {AD13,
    AUTHOR = {Anantharaman-Delaroche, C.},
     TITLE = {The {H}aagerup property for discrete measured groupoids},
 BOOKTITLE = {Operator algebra and dynamics},
    SERIES = {Springer Proc. Math. Stat.},
    VOLUME = {58},
     PAGES = {1--30},
 PUBLISHER = {Springer, Heidelberg},
      YEAR = {2013},
}

@article {Jo05,
    AUTHOR = {Jolissaint, P.},
     TITLE = {The {H}aagerup property for measure-preserving standard
              equivalence relations},
   JOURNAL = {Ergodic Theory Dynam. Systems},
  FJOURNAL = {Ergodic Theory and Dynamical Systems},
    VOLUME = {25},
      YEAR = {2005},
    NUMBER = {1},
     PAGES = {161--174},
}

@article {Po06,
    AUTHOR = {Popa, S.},
     TITLE = {On a class of type {${\rm II}_1$} factors with {B}etti numbers
              invariants},
   JOURNAL = {Ann. of Math. (2)},
  FJOURNAL = {Annals of Mathematics. Second Series},
    VOLUME = {163},
      YEAR = {2006},
    NUMBER = {3},
     PAGES = {809--899},
}

@article {OzPo04,
    AUTHOR = {Ozawa, N. and Popa, S.},
     TITLE = {Some prime factorization results for type {${\rm II}_1$}
              factors},
   JOURNAL = {Invent. Math.},
  FJOURNAL = {Inventiones Mathematicae},
    VOLUME = {156},
      YEAR = {2004},
    NUMBER = {2},
     PAGES = {223--234},
}

@article {SiWi13,
    AUTHOR = {Sizemore, J. O. and Winchester, A.},
     TITLE = {Unique prime decomposition results for factors coming from
              wreath product groups},
   JOURNAL = {Pacific J. Math.},
  FJOURNAL = {Pacific Journal of Mathematics},
    VOLUME = {265},
      YEAR = {2013},
    NUMBER = {1},
     PAGES = {221--232},
}

@article {HoIs17,
    AUTHOR = {Houdayer, C. and Isono, Y.},
     TITLE = {Unique prime factorization and bicentralizer problem for a
              class of type {III} factors},
   JOURNAL = {Adv. Math.},
  FJOURNAL = {Advances in Mathematics},
    VOLUME = {305},
      YEAR = {2017},
     PAGES = {402--455},
}

@article {DiLi07,
    AUTHOR = {Dicks, W. and Linnell, P. A.},
     TITLE = {{$L^2$}-{B}etti numbers of one-relator groups},
   JOURNAL = {Math. Ann.},
  FJOURNAL = {Mathematische Annalen},
    VOLUME = {337},
      YEAR = {2007},
    NUMBER = {4},
     PAGES = {855--874},
}

@article {Po07,
    AUTHOR = {Popa, S.},
     TITLE = {On {O}zawa's property for free group factors},
   JOURNAL = {Int. Math. Res. Not. IMRN},
  FJOURNAL = {International Mathematics Research Notices. IMRN},
      YEAR = {2007},
    NUMBER = {11},
     PAGES = {Art. ID rnm036, 10},
}

@misc{So24,
      title={Measured groupoids beyond equivalence relations and group actions}, 
      author={S. Chakraborty},
      year={2024},
      eprint={2410.15147},
      archivePrefix={arXiv},
      primaryClass={math.GR},

}

\bigskip
\bigskip
ADDRESS

\smallskip
Felipe Flores: Department of Mathematics, University of Virginia, 114 Kerchof Hall. 141 Cabell Dr,
Charlottesville, Virginia, United States. E-mail: hmy3tf@virginia.edu

\bigskip
James Harbour: Department of Mathematics, University of Virginia, Kerchof Hall. 141 Cabell Dr,
Charlottesville, Virginia, United States. E-mail: james.h.harbour@gmail.com

\end{document}